\tikzstyle{none}=[inner sep=0pt]
\newcommand{\bR}{\mathbb R}
\newcommand{\bS}{\mathbb S}
\newcommand{\bN}{\mathbb N}
\newcommand{\bC}{\mathbb C}
\newcommand{\bD}{\mathbb D}
\newcommand{\bZ}{\mathbb Z}
\renewcommand{\epsilon}{\varepsilon}
\newcommand{\x}{\times}
\DeclareMathOperator{\wsp}{sp}
\DeclareMathOperator{\ssp}{\widetilde{sp}}
\newcommand{\lk}{\ell k}
\newtheorem{thm}{Theorem}[section]
\newtheorem{lemma}[thm]{Lemma}
\newtheorem{cor}[thm]{Corollary}
\newtheorem{rem}[thm]{Remark}
\newtheorem{prop}[thm]{Proposition}
\newtheorem{exa}[thm]{Example}
\newtheorem{defn}[thm]{Definition}
\numberwithin{equation}{section}
\newcommand{\dfn}[1]{\emph{#1}}
\begin{document}

\title[Slice-torus invariants, combinatorial invariants and positivity]{Slice-torus link invariants, combinatorial invariants, and positivity conditions.}

\author{Carlo Collari}

\maketitle
\begin{abstract}
We prove some necessary conditions for a link to be either concordant to a quasi-positive link, quasi-positive, positive, or the closure of a positive braid.~%
The main applications of our results are a characterisation of positive links with unlinking number $1$ and $2$, and a combinatorial criterion to test if a positive link is the closure of a positive braid.
Finally, we compile a table of all positive and positive-braid  prime links with less than $8$ crossings.
\end{abstract}

\section{Introduction}

In this note we study slice-torus and combinatorial invariants of multi-component links -- {as opposed to} knots -- in $\bS^3$ satisfying various positivity conditions.~%
The aim of this note is dual;~%
the first goal is to provide a necessary condition for a link to be concordant to a quasi-positive link, and compare it to other known obstructions.~%
The second goal is to investigate certain combinatorial invariants of quasi-positive, positive and positive-braid links, providing obstructions for a link to fall in one of the aforementioned classes.~%
As applications we give a characterisation of positive links with unlinking number $1$ and $2$, and a combinatorial criterion to test if a positive link is the closure of a positive braid. Finally, we compile a table of all positive and positive-braid  prime links with less than $8$ crossings.

In what follows all manifolds and sub-manifolds are smooth and oriented, and boundaries are endowed with the induced orientation. 

\subsection{Concordance to quasi-positive links}

A \dfn{link} is a $1$-dimensional closed sub-manifold of the $3$-sphere $\bS^3$,  and a \dfn{knot} is a connected link. The letter $L$ will be reserved for links, whereas the letter $K$ will be reserved for knots. Two links are \dfn{equivalent} up to orientation-preserving ambient isotopy. As customary in the literature, we shall abuse the notation and make no difference between a link and its equivalence class.

Quasi-positive links are a special family of links which sits in the intersection between knot theory and the theory of complex curves in $\bC^2$. More precisely, quasi-positive link are those links obtained as the transverse intersection of a complex curve in $\bC^2$  with $\bS^3 = \{  (w,z)\in \bC^2\,\vert\,\vert w \vert^2 + \vert z\vert^2 =1\}$ -- see \cite{BoileauOrevkov,RudolphAC}. 
This hints to the fact that quasi-positive links should have special properties with respect to concordance.~%
\begin{defn}
Let $L_0$ and $L_1$ be two $\ell$-component links. Then, $L_0$ and $L_1$ are \dfn{concordant} if there exists properly embedded disjoint annuli $A_1 \sqcup ... \sqcup A_\ell \subset \bS^3\times [0,1]$, such that both $A_i\cap \bS^3\x\{0\}$ and $A_i\cap \bS^3\x\{1\}$ are non-empty for each $i$, and
\[  \left( A_1 \cup ... \cup A_\ell \right) \cap \bS^{3}\x \{ 0 \} = L_0,\quad\text{and}\quad  \left( A_1 \cup ... \cup A_\ell \right) \cap \bS^{3}\x \{ 1 \} = rL_1^*,\]
where $rL$ denotes the link $L$ with its orientation reversed, and $L^*$ denotes the image of $L$ under an orientation reversing diffeomorphism of $\bS^3$.
A link concordant to an unlink is called (strongly) \dfn{slice}.
\end{defn}

If one considers topologically locally flat embeddings instead of smooth embeddings, quasi-positive links do not present any special properties; in fact, every link is topologically locally flat concordant to a (strongly) quasi-positive link \cite{BorodzickFeller}. It follows that all concordance invariants which are also invariant under topological locally flat concordance cannot detect quasi-positive links. In particular, the linking matrix and, more generally, all classical concordance invariants obtained from the Seifert form, such as the Levine-Tristram signatures, fail to distinguish these links -- cf.~\cite{RudolphI,Rudolph05}.

The situation is different if one looks from the smooth perspective. 
First, we recall that slice-torus link invariants are {a special class of} $\bR$-valued concordance invariants of links, which vanish on unlinks {-- see} \cite{CavalloCollari}  {for the precise definition}.~%
Prominent examples of slice-torus link invariants are the Ozsv\'ath-Szab\'o-Rasmussen $\tau$-invariant \cite{CavalloTau,OzsvathSzabo,RasmussenThesis}, and (a re-scaling of) the Rasmussen $s$-invariant \cite{BeliakovaWehrli,Rasmussen}.~%
It is known that the slice-torus link invariants provide an obstruction to being concordant to a quasi-positive link;~%
indeed, it follows from \cite[Theorem 3.2]{CavalloCollari} (see \cite{CavalloBennequin,Lewarksln}, and references therein, for knots, and special slice-torus link invariants) that: if $L$ is an $\ell$-component link concordant to a quasi-positive link, and $\nu$ is a slice-torus invariant, then
\begin{equation}
2\nu(L) -\ell =-\chi_4(L),
\label{eq:OldConditionsQP}
\end{equation}
where $\chi_{4}$ is the (smooth) \dfn{slice Euler characteristic}, that is
\[ \chi_4(L) = \max \{ \chi(\Sigma)\,|\, \Sigma \subset\bD^4\ \text{compact surface with no closed components},\, \partial \Sigma = \Sigma \cap \partial \bD^4= L\}.\]
An immediate consequence of \eqref{eq:OldConditionsQP} is that there are links which are not concordant to any quasi-positive link, eg.~the figure-eight knot $4_1$ is such that $\nu(4_1) = 0$, and $-\chi_4(4_1) =1$ \cite{KnotInfo}. ~%

While there are some results for special classes of links including knots, to the best of the author's knowledge \eqref{eq:OldConditionsQP} is the only known general obstruction to being concordant to a quasi-positive link.~%

\begin{rem}
To be fair, there is another general obstruction to being concordant to a quasi-positive link. This obstruction is stated in terms of the Hom-Wu invariant $\nu^{+}$ \cite{CavalloLocally,HomWu} -- which is not a slice-torus link invariant.~%
Concretely, it follows from  \cite[Proposition~1.6]{CavalloLocally} (cf.~\cite[Proposition 3]{HomWu} for knots) that if $L$ is concordant to a quasi-positive link, then
\[2\nu^+(L) -\ell =-\chi_4(L) \]
However, this obstruction is strictly weaker than Equation \eqref{eq:OldConditionsQP} specialised to the case $\nu = \tau$ \cite{CavalloLocally,HomWu}. 
\end{rem}

In this note we provide a new obstruction to being concordant to a quasi-positive link based on the slice-torus link invariants and linking numbers.~%

\begin{thm}\label{teo:MainFullQP}
Let $L$ be a link, and let $\nu$ be a slice-torus link invariant.~%
If $L$ is concordant to a quasi-positive link, then
\begin{equation}
\nu(L) - \sum_{i=1}^{k} \nu(L_i) \leq \sum_{i<j} \lk (L_i,L_j),
\label{eq:teo_conc_qp}
\end{equation}
for each partition of $L$ into sub-links $L_1,...,L_k$.
\end{thm}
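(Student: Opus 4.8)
The plan is to reduce to the case where $L$ is itself quasi-positive and then to reformulate \eqref{eq:teo_conc_qp} in terms of the slice Euler characteristic via \eqref{eq:OldConditionsQP}. First I would use that $\nu$ and all linking numbers are concordance invariants, and that a concordance carries the partition of $L$ to a compatible partition of any concordant link: a concordance from $L$ to a quasi-positive link is a union of annuli which, grouped according to the partition, restricts to concordances of the sublinks. Hence both sides of \eqref{eq:teo_conc_qp} are unchanged by replacing $L$ with a concordant quasi-positive link, so it suffices to treat $L$ quasi-positive. A key preliminary observation is that each sublink is then again quasi-positive: writing $L = \hat\beta$ for a quasi-positive braid $\beta = \prod_{m} w_m \sigma_{j_m} w_m^{-1}$ and deleting the strands not belonging to $L_i$, the strand-forgetting homomorphism sends each factor either to a conjugate of a positive generator or to the trivial braid, so the restriction $\beta_i$ is quasi-positive with $\hat{\beta_i} = L_i$.

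Granting this, \eqref{eq:OldConditionsQP} applies to $L$ and to every $L_i$, and since $\sum_i \ell_i = \ell$, the inequality \eqref{eq:teo_conc_qp} becomes equivalent to
\[
\sum_{i=1}^{k}\chi_4(L_i) - \chi_4(L) \leq 2\sum_{i<j}\lk(L_i,L_j).
\]
I would establish this in fact as an equality by computing all three slice Euler characteristics from the braid. By the sharp slice--Bennequin equality for quasi-positive links (Rudolph, via the work of Kronheimer--Mrowka), a quasi-positive surface realizes $\chi_4$, so that $\chi_4(L) = s - e(\beta)$ and $\chi_4(L_i) = s_i - e(\beta_i)$, where $s, s_i$ are the numbers of strands and $e$ denotes the exponent sum (which, for a quasi-positive braid, equals the number of bands). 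It then remains only to compare exponent sums. Classifying the crossings of $\beta$ by the sublinks of the two strands meeting at each crossing, the crossings internal to $L_i$ are exactly those of $\beta_i$, while the signed count of crossings between $L_i$ and $L_j$ equals $2\lk(L_i,L_j)$; therefore
\[
e(\beta) = \sum_{i=1}^{k} e(\beta_i) + 2\sum_{i<j}\lk(L_i,L_j).
\]
Substituting the two slice--Bennequin identities and using $\sum_i s_i = s$ collapses the displayed comparison to an equality, which in particular yields \eqref{eq:teo_conc_qp}.

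The main obstacle is precisely this slice Euler characteristic computation, namely arranging for the sharp slice--Bennequin equality to hold simultaneously for $L$ and for every sublink. This rests on two nontrivial inputs: that quasi-positivity is inherited by sublinks under strand deletion, so that \eqref{eq:OldConditionsQP} is legitimately invoked on each $L_i$, and that a quasi-positive Seifert surface is genus-minimizing. A tempting alternative---constructing an explicit ``band-removal'' cobordism from $L$ to the split union $L_1 \sqcup \dots \sqcup L_k$ and applying the cobordism inequality for $\nu$ together with split additivity---is more delicate, since deleting the mixed bands of the quasi-positive surface produces a quasi-positive link having the correct sublinks and vanishing mutual linking numbers, but \emph{not} literally the split union (the conjugating words may still entangle the sublinks). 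Routing the argument through $\chi_4$ sidesteps this entanglement and, as a byproduct, reveals that \eqref{eq:teo_conc_qp} is an equality for quasi-positive links, the stated inequality being the form convenient for obstruction purposes.
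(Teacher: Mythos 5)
Your opening reduction (concordance invariance of $\nu$ and of the linking matrix carries the partition along the annuli, so one may assume $L$ itself quasi-positive) is correct and is exactly what the paper does, and your exponent-sum bookkeeping is the paper's Lemma~\ref{lem:KeyLemma}. The fatal problem is your ``key preliminary observation'' that deleting strands from a quasi-positive braid $\beta=\prod_m w_m\sigma_{j_m}w_m^{-1}$ yields a quasi-positive braid, so that every sub-link $L_i$ is quasi-positive. This is false. Strand deletion is not a group homomorphism: it respects a product only if the deleted strand-sets are matched along the composition. When the crossing $\sigma_{j_m}$ of a band involves a deleted strand, the factor $w_m\sigma_{j_m}w_m^{-1}$ is \emph{not} sent to the trivial braid, because $w_m$ and $w_m^{-1}$ are then deleted with respect to strand-sets differing by the transposition at that crossing, so the two deletions are no longer inverse to each other --- this is precisely the ``entanglement'' phenomenon you flag for the band-removal cobordism, and it already afflicts your braid-level claim. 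The failure is irreparable: the paper's Appendix (Theorem~\ref{teo:allsublinks}) shows that \emph{every} link, in particular the non-quasi-positive figure-eight knot $4_1$, is a sub-link of a quasi-positive link obtained by adding one unknotted component; there, each quasi-positive factor $\sigma_{i_r}^{-1}(\sigma_{n+1}\cdots\sigma_{i_r+1})\sigma_{i_r}(\sigma_{i_r}\cdots\sigma_{n+1})$ deletes to the \emph{negative} generator $\sigma_{i_r}^{-1}$, not to a conjugate of a positive generator or to the identity.

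Consequently your stronger conclusion --- that \eqref{eq:teo_conc_qp} is an equality for quasi-positive links --- is also false, since it rests on applying the sharp slice--Bennequin equality $-\chi_4(L_i)=sl(\beta_i)$ to sub-braids that need not be quasi-positive. Concretely, take the quasi-positive link $L'\supset 4_1$ of Theorem~\ref{teo:allsublinks} and the partition into $L_1=4_1$ and the added unknot: Corollary~\ref{cor:slice-torusBennequin} gives $sl(\beta_1)\le 2\nu(4_1)-1=-1$, while $-\chi_4(4_1)=1$, so your claimed identity for $\beta_1$ fails (equality would likewise contradict Theorem~\ref{teo:obstruction_qp}, which would force $2\nu(4_1)-1=sl_{max}(4_1)$). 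The paper's proof is deliberately asymmetric exactly to avoid this trap: the sharp equality $2\nu(L)-\ell=sl(\beta)$ coming from \eqref{eq:OldConditionsQP} and \eqref{eq:sl=chi} is invoked only for the full quasi-positive braid $\beta$, whereas for the sub-braids $\beta_i$ one uses only the slice-torus Bennequin \emph{inequality} $sl(\beta_i)\le 2\nu(L_i)-\ell_i$ of Corollary~\ref{cor:slice-torusBennequin}, which is valid for arbitrary braids; Lemma~\ref{lem:KeyLemma} then converts $sl(\beta)-\sum_i sl(\beta_i)$ into $2\sum_{i<j}\lk(L_i,L_j)$. That asymmetry is the reason the theorem is an inequality rather than an equality, and your argument cannot be repaired without adopting it.
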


Note that Theorem \ref{teo:MainFullQP} poses constraints on the linking matrix of a quasi-positive link in terms of its sub-links.~%
This is slightly surprising for two reasons;~%
first, as we mentioned above, all linking matrices can be realised by quasi-positive links.~%
Second, all links arise as sub-links of a quasi-positive link by adding a single unknotted component. Since
we were unable to find a proof of this fact, which is well-known two experts, we included one in Appendix \ref{app:sublinks}.~%

Going back to our result, we note that the obstructions provided by Theorem~\ref{teo:MainFullQP} and by~\eqref{eq:OldConditionsQP} are independent. %
First, we give an example where \eqref{eq:OldConditionsQP} provides a stronger obstruction than Theorem~\ref{teo:MainFullQP}.
Recall that $L$ is the \dfn{split union} of $L_1, ..., L_k$, we write  $L = L_1 \sqcup \cdots \sqcup L_k$, if $L= L_1 \cup .... \cup L_k$ and the $L_i$'s are separated by embedded copies of $\bS^2$.
\begin{exa}
Slice-torus link invariants are additive under split union~\cite[Definition~2.1~(B)]{CavalloCollari}. Thus, the split union of a figure-eight knot and an unlink does not satisfy \eqref{eq:OldConditionsQP}, but satisfies \eqref{eq:teo_conc_qp}.
\end{exa}

On the other hand, we use Theorem \ref{teo:MainFullQP} to prove the following result.

\begin{thm}\label{teo:ExampleConctoQP}
Denote by $L(k)$ the two-bridge link in Figure \ref{fig:L_hk}.~For all $k\in \bN$, and any slice-torus link invariant $\nu$, we have
\[ 2\nu(L(k)) - 2 = -\chi_4(L(k)) = 2k-2.\] 
However, $L(k)$ is not concordant to any quasi-positive link for all $k\geq 1$.~%
In particular, the Whitehead link $L(1)$ is not concordant to any quasi-positive link.~%
\end{thm}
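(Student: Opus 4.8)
The plan is to prove the numerical identity by a sandwiching argument valid for \emph{every} slice-torus invariant, and then to feed the resulting value of $\nu(L(k))$ into Theorem \ref{teo:MainFullQP}. Throughout I write $\ell = 2$, since $L(k)$ has two components, and I record two facts read directly off the diagram in Figure \ref{fig:L_hk}: each of the two components of $L(k)$ is an unknot, and their linking number vanishes.

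First I would establish a two-sided estimate on $\nu(L(k))$. For the upper bound I would exhibit a spanning surface $\Sigma_k$ for $L(k)$ with $\chi(\Sigma_k) = 2 - 2k$ (for instance the surface produced by Seifert's algorithm on a suitable diagram, or a surface built from an annulus by attaching $2k-2$ bands); this shows $\chi_4(L(k)) \ge 2 - 2k$, and combined with the slice-genus bound $2\nu(L) - \ell \le -\chi_4(L)$ satisfied by every slice-torus invariant \cite{CavalloCollari} it gives both $\nu(L(k)) \le k$ and $-\chi_4(L(k)) \le 2k-2$. For the lower bound I would produce a diagram (equivalently a transverse representative) $D_k$ of $L(k)$ whose writhe $w(D_k)$ and number of Seifert circles $s(D_k)$ satisfy $w(D_k) - s(D_k) = 2k - 2$, and then invoke the slice-Bennequin inequality $2\nu(L) - \ell \ge w(D) - s(D)$, again valid for every slice-torus invariant \cite{CavalloCollari}, to obtain $\nu(L(k)) \ge k$. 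Chaining the estimates, $2k-2 \le w(D_k) - s(D_k) \le 2\nu(L(k)) - 2 \le -\chi_4(L(k)) \le 2k-2$, so all four quantities coincide; this forces $2\nu(L(k)) - 2 = -\chi_4(L(k)) = 2k-2$ for every slice-torus $\nu$, which is the first assertion.

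The delicate point is the lower bound, namely the construction of a diagram $D_k$ with $w(D_k) - s(D_k) = 2k-2$, equivalently the computation of the maximal self-linking number of $L(k)$ and the verification that it equals $-\chi_4(L(k)) = 2k-2$. The difficulty is that $L(k)$ is not a positive link, so there is no positive diagram from which the bound can be read off directly, and the Seifert surface of any such $D_k$ will fail to be genus-minimising. Moreover, since the linking number of the two components vanishes, the inter-component crossings contribute nothing net to the writhe, so the required large writhe must be produced entirely by self-crossings of the individual (unknotted) components while keeping the number of Seifert circles under control. Exhibiting an efficient diagram under these constraints is the main obstacle, and is precisely where the non-positivity of $L(k)$ makes the computation nontrivial.

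Finally, for the non-concordance statement I would argue by contradiction using Theorem \ref{teo:MainFullQP}. Suppose $L(k)$ is concordant to a quasi-positive link, and apply \eqref{eq:teo_conc_qp} to the partition of $L(k)$ into its two components $L_1, L_2$. Since each $L_i$ is an unknot we have $\nu(L_i) = 0$, and since $\lk(L_1,L_2) = 0$ the inequality reduces to $\nu(L(k)) \le 0$. This contradicts $\nu(L(k)) = k \ge 1$ for every $k \ge 1$; hence $L(k)$ is not concordant to any quasi-positive link, and in particular the Whitehead link $L(1)$ is not.
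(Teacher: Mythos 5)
Your overall architecture (sandwich $\nu(L(k))$ between a surface bound and a diagrammatic lower bound, then feed $\nu(L(k))=k$ into Theorem \ref{teo:MainFullQP}) is the same as the paper's, and both your upper bound and your final non-concordance step are correct. The genuine gap is the lower bound, and it is not merely ``the main obstacle'': the diagram you postulate cannot exist. You yourself observe that producing $D_k$ with $w(D_k)-s(D_k)=2k-2$ is equivalent to showing $sl_{max}(L(k))=2k-2$ (the equivalence holds, via Yamada's braiding algorithm in one direction and taking braid-closure diagrams in the other). But that statement is provably false by the paper's own results: applying Proposition \ref{prop:slmetc vs lk} to the partition of $L(k)$ into its two components --- which are unknots with linking number zero --- gives $sl_{max}(L(k))\leq sl_c(L(k))\leq 2\lk(L(k))-2=-2$ (this is Corollary \ref{cor:slc and unknotted comp}, and exactly the computation in Example \ref{ex:slcandqp}). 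So $w(D)-o(D)\leq -2$ for \emph{every} diagram $D$ of $L(k)$, and the value $2k-2\geq 0$ is unreachable for $k\geq 1$. The weak form of the bound you invoke, $2\nu(L)-\ell\geq w(D)-o(D)$, can therefore never certify $\nu(L(k))\geq k$; the difficulty is not the non-positivity of $L(k)$ but the fact that you have discarded precisely the terms of the bound that matter here.

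The repair is what the paper does: apply the full combinatorial bound of Proposition \ref{pro:combinatorialbound}, whose lower bound is $\nu(L)\geq \tfrac{1}{2}\bigl(w(D)-o(D)+2s_+(D)+\ell-2\ell_s(D)\bigr)$, directly to the diagram $D(k)$ of Figure \ref{fig:L_hk}. In $D(k)$ the two twist boxes carry crossings of opposite signs (the strands are parallel in one box and anti-parallel in the other, which is why $\lk=0$), and smoothing the $2k$ crossings of the anti-parallel box creates $2k-1$ extra Seifert circles; these inflate $s_+(D(k))$ while $\ell_s(D(k))=1$, and the correction term $2s_+(D)-2\ell_s(D)$ --- absent from your inequality --- pushes the lower bound up to $k$. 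Since $D(k)$ is alternating, hence homogeneous, Remark \ref{rem:homogeneous} says the upper and lower bounds of Proposition \ref{pro:combinatorialbound} coincide, so $\nu(L(k))=k$ exactly. The paper then gets $-\chi_4(L(k))\leq 2k-2$ by unlinking $L(k)$ with $k$ self-crossing changes (your surface argument in different clothing) and closes the sandwich with $2\nu(L)-\ell\leq-\chi_4(L)$; with that replacement for your lower bound, the rest of your argument goes through verbatim.
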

\begin{proof}
Using the diagram $D(k)$ in Figure~\ref{fig:L_hk}, and \cite[Theorem~1.3]{CavalloCollari} (cf.~Proposition \ref{pro:combinatorialbound}), one can compute $\nu(L(k)) = k$.~%
Now, a quick inspection of $D(k)$ shows that $L(k)$ can be unlinked with $k$ crossing changes on a single component (cf.~\cite[Proposition 2.7]{CavalloCollariConway}), therefore $-\chi_4(L(k)) \leq 2k-2$. %
Since $2\nu(L) -\ell \leq -\chi_4(L)$, by  \cite[Proposition~2.11]{CavalloCollari}, we obtain the desired equality. %
Finally, $L(k)$ has two trivial components with linking number zero. The result follows directly from Theorem~\ref{teo:MainFullQP}.
\end{proof}
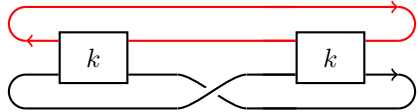
\begin{figure}[ht]

\begin{tikzpicture}[thick, scale = .9]
\draw[,red] (0,.75) arc (90:270:.25);
\draw (0,-.25) arc (90:270:.25);
\draw[<-,red] (0,.25) -- (4,.25);
\draw[] (0,-.25) -- (4,-.25);
\draw[red] (0,.75) -- (4,.75);
\draw[] (0,-.75) -- (4,-.75);

\draw[fill, white] (.5,.375) rectangle (1.5,-.375);
\draw (1.5,.375) rectangle (.5,-.375);
\node at (1,0) {$k$};

\draw[fill, white] (2.25,-.895) rectangle (3.25,-.125);
\draw[] (2.24,-.25) .. controls +(.25,0) and +(-.25,0) .. (3.26,-.75);
\draw[white, line width = 5] (2.24,-.75) .. controls +(.25,0) and +(-.25,0) .. (3.26,-.25);
\draw[] (2.24,-.75) .. controls +(.25,0) and +(-.25,0) .. (3.26,-.25);

\begin{scope}[shift = {+(-2.5,0)}]
\draw[red] (6,.25) -- (8,.25);
\draw[->] (6,-.25) -- (8,-.25);
\draw[->,red] (6,.75) -- (8,.75);
\draw[] (6,-.75) -- (8,-.75);

\draw[fill, white] (6.5,.375) rectangle (7.5,-.375);
\draw (6.5,.375) rectangle (7.5,-.375);
\draw[,red] (8,.75) arc (90:-90:.25);
\draw (8,-.25) arc (90:-90:.25);

\node at (7,0) {$k$};
\end{scope}

\end{tikzpicture}
\caption{The diagram $D(k)$ for the link $L(k)$. The boxes denote $k$ positive full-twists.}
\label{fig:L_hk}
\end{figure}

We conclude the first part of the paper by studying the  concordance self-linking number $sl_c$.~%
We shall be more precise in Section \ref{sec:background}, but we briefly recall the definitions and some properties of $sl_{max}$ and $sl_{c}$.
Each link can be represented non-uniquely as a closed braid~\cite{Alexander}, and to each braid $\beta$ is associated its self-linking number $sl(\beta)\in\bZ$.~%
However, this number depends on the braid used to represent the given link.~%
To obtain a link invariant and a concordance invariant, respectively, one can define
\[ sl_{max}(L) = \max\{ sl(\beta)\,|\,\text{the closure of }\beta\text{ is } L \},\]
and
\[ sl_{c}(L) = \max\{ sl(\beta)\,|\,\text{the closure of }\beta\text{ is concordant to } L \}.\]
Rudolph's \dfn{slice-Bennequin inequality} \cite{RudolphObstructionToSlice} tells us that, for each braid $\beta$ representing a link $L$, we have
\[ sl(\beta) \leq -\chi_4(L).\]
It follows that both $sl_{max}$ and $sl_c$ are finite numbers.
Furthermore, the slice-Bennequin inequality is sharp for quasi-positive braids~\cite[Section~3]{RudolphObstructionToSlice}.
Hence, if $L$ is either a quasi-positive link, or is concordant to a quasi-positive link, then $sl_{max}(L) = -\chi_4(L)$, or $sl_{c}(L)= - \chi_4(L)$, respectively.\\
The slice-Bennequin inequality can be refined to a \dfn{slice-torus Bennequin inequality}; that is, for each $\ell$-component link $L$, and each slice-torus link invariant $\nu$, we have
\[ sl_{max}(L) \leq sl_{c}(L) \leq 2\nu(L) - \ell \leq -\chi_4(L).\]
The first inequality is trivial. The second inequality is well-known to experts, and can be inferred in this general setting from the combinatorial bound \cite[Theorem 1.4]{CavalloCollari} applied to the closure of a braid (see Corollary \ref{cor:slice-torusBennequin}). Finally, the third inequality follows from \cite[Proposition 2.11]{CavalloCollari}.
In particular, the equality $sl_{c}(L) = -\chi_4(L)$ gives a strictly stronger obstruction to being concordant to a quasi-positive link than Equation \eqref{eq:OldConditionsQP} -- cf.~Remark \ref{rem:CvLiu}.

Unfortunately, the concordance self-linking number is incredibly difficult to compute even in simple cases. Which is the reason why, to the best of the author's knowledge, this quantity has never been considered before. In this paper we shall compute $sl_c$ for links which are closures of pure braids (Proposition \ref{prop:slc and pure}). Moreover, we  completely characterise the closures of alternating pure braids which are concordant to a quasi-positive link (Proposition \ref{prop:altpurebraids}).

\begin{rem}\label{rem:CvLiu}
Despite how difficult is to compute $sl_c$, sometimes it can be estimated and used effectively as an obstruction.
For instance, \cite[Proposition 1.6]{CavalloLiu} uses the maximal self-linking number $sl_{max}$ to show that the link $L(k)$ in Figure \ref{fig:L_hk} is not quasi-positive.~%
A similar reasoning, where $sl_{max}$ is replaced by $sl_{c}$, can be used to (re-)prove Theorem~\ref{teo:ExampleConctoQP} -- see~Proposition \ref{prop:slmetc vs lk} and Example~\ref{ex:slcandqp}.
\end{rem}

\begin{rem}\label{rem:MainVSslc}
The proof of Theorem \ref{teo:MainFullQP} uses the equality $sl_{max} = 2\nu - \ell$, rather than quasi-positivity itself. In particular, Theorem \ref{teo:MainFullQP} still holds if we replace the hypothesis "$L$ is concordant to a quasi-positive link" with ``$L$ such that $sl_c(L)= 2\nu(L) -\ell$''. Thus the condition $sl_c(L) = - \chi_4(L)$ provides a stronger, if less computable, obstruction than Theorem~\ref{teo:MainFullQP}.
\end{rem}

\subsection{Obstructions to positivity conditions}

In the second part of this paper we deal with equivalence rather than concordance. Our aim is to provide necessary conditions for a link to be quasi-positive, positive, or the closure of a positive braid, and investigate the consequences of these conditions.~%
Our first result is the following necessary condition for certain links to be quasi-positive.

\begin{thm}\label{teo:obstruction_qp}
Let $L$ be a quasi-positive link. If there exists a partition of $L$ into sub-links $L_1,...,L_k$ such that
\begin{equation}
\label{eq:hp_obs_qp}
 \nu(L) - \sum_{i=1}^{k} \nu(L_i) = \sum_{i<j} \lk (L_i,L_j),
\end{equation}
then
\[ 2\nu(L_i) - \ell_i = sl_{max}(L_i),\quad \text{for all }i\in \{ 1 ,..., k\}.\]
\end{thm}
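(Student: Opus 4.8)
The statement is precisely the equality case of the inequality in Theorem~\ref{teo:MainFullQP}, so my plan is to revisit the proof of that inequality and track which of its intermediate estimates become equalities under the hypothesis \eqref{eq:hp_obs_qp}. Since $L$ is quasi-positive, Rudolph's sharpness of the slice–Bennequin inequality provides a quasi-positive braid $\beta$ on $n$ strands whose closure is $L$ and with $sl(\beta)=-\chi_4(L)$; combined with the slice-torus Bennequin inequality this forces
\[ sl(\beta)=sl_{max}(L)=2\nu(L)-\ell. \]
I would fix such a $\beta$ once and for all.

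The first key step is a decomposition of the self-linking number of $\beta$ along the partition. The permutation underlying $\beta$ groups the $n$ strands according to which component of $L$ — and hence which sublink $L_i$ — they belong to. Deleting all strands not belonging to $L_i$ yields a sub-braid $\beta_i$, on $n_i$ strands, whose closure is $L_i$, with $\sum_i n_i=n$. Writing $sl(\gamma)=e(\gamma)-(\text{number of strands})$ for $e$ the exponent sum, I split the crossings of $\beta$ into those internal to some $L_i$ and those between distinct sublinks. The internal crossings of $L_i$ are exactly the crossings of $\beta_i$, while the signed count of crossings between $L_i$ and $L_j$ equals $2\lk(L_i,L_j)$ by the diagrammatic formula for the linking number. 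This gives the identity
\[ sl(\beta)=\sum_{i=1}^k sl(\beta_i)+2\sum_{i<j}\lk(L_i,L_j), \]
valid for any braid and any partition.

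With this identity in hand, the conclusion is bookkeeping. For each $i$ the slice-torus Bennequin inequality gives $sl(\beta_i)\le sl_{max}(L_i)\le 2\nu(L_i)-\ell_i$. Substituting the displayed identity into $sl(\beta)=2\nu(L)-\ell$ and using the hypothesis \eqref{eq:hp_obs_qp} in the form $2\nu(L)-2\sum_i\nu(L_i)=2\sum_{i<j}\lk(L_i,L_j)$, together with $\ell=\sum_i\ell_i$, I obtain
\[ \sum_{i=1}^k sl(\beta_i)=2\sum_{i=1}^k\nu(L_i)-\ell=\sum_{i=1}^k\bigl(2\nu(L_i)-\ell_i\bigr). \]
Thus the sum of the quantities $sl(\beta_i)$ equals the sum of their upper bounds $2\nu(L_i)-\ell_i$; since each term is bounded above by the corresponding bound, every one of these inequalities must be an equality. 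In particular $sl_{max}(L_i)=2\nu(L_i)-\ell_i$ for all $i$, as claimed.

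The only genuinely delicate point is the self-linking decomposition identity: one must check carefully that deleting strands respects the component/sublink structure — so that the closure of $\beta_i$ really is $L_i$ — and that the sign conventions make the cross-terms equal to $2\lk(L_i,L_j)$ rather than $\pm\lk$ or $\lk$. Everything else is a direct comparison of the two sides of Theorem~\ref{teo:MainFullQP} once equality is assumed.
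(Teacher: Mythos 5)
Your proposal is correct and follows essentially the same route as the paper: the paper also fixes a quasi-positive braid $\beta$ with $sl(\beta)=2\nu(L)-\ell$ (via \eqref{eq:sl=chi} and \eqref{eq:OldConditionsQP}), applies the decomposition $sl(\beta)-\sum_i sl(\beta_i)=2\sum_{i<j}\lk(L_i,L_j)$ (its Lemma \ref{lem:KeyLemma}), and concludes by the same summation argument that each inequality $sl(\beta_i)\leq 2\nu(L_i)-\ell_i$ (Corollary \ref{cor:slice-torusBennequin}) must be an equality. Your explicit insertion of $sl_{max}(L_i)$ into the chain $sl(\beta_i)\le sl_{max}(L_i)\le 2\nu(L_i)-\ell_i$ is a slightly more careful rendering of the final step than the paper's, but the argument is the same.
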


Exactly as in the case of Theorem \ref{teo:MainFullQP}, see Remark \ref{rem:MainVSslc}, the proof of Theorem \ref{teo:obstruction_qp} uses the condition $sl_{max}= 2\nu -\ell$ rather than quasi-positivity itself.
Therefore, Theorem \ref{teo:obstruction_qp} provides a weaker obstruction than the conditions $sl_{max} = 2\nu -\ell$ and $sl_{max} = - \chi_4$.
{Nonetheless, Theorem~\ref{teo:obstruction_qp} is independent from the condition $2 \nu -\ell = - \chi_4$ (Example \ref{exa:obs-qp}).}

Now, we restrict our attention to positive links, which are special quasi-positive links \cite{Nakamura,RudolphPisSQP}.~%
More precisely, we use combinatorial link invariants to establish some conditions for a link to be positive.~%

Before stating our  result, we need to establish some terminology.~%
Recall that link is \dfn{split} if it is the split union of two links, and is \dfn{completely split} if it is the split union of knots.
The \dfn{splitting number} $\wsp$ of a link $L$ is the minimum number of crossing changes needed to obtain a completely split link from $L$. The splitting number must not be confused with what we call the \dfn{strong splitting number} $\ssp$, which has a similar definition but the only crossing changes allowed are those between distinct components. We remark that sometimes, in the literature, what we call strong splitting number is referred to as splitting number, and what we call splitting number is sometimes called weak splitting number. Finally, \dfn{unlinking number} $u(L)$ is the minimal number of crossing changes to obtain an unlink from $L$. The unlinking number of a knot is also called \dfn{unknotting number}.
Now, we are ready to state our result.

\begin{thm}\label{teo:obstruction_p}
If $L=K_1 \cup \cdots \cup K_\ell$ is a positive link, then we have%
\begin{equation}
\lk (L) = \wsp (L) = \ssp(L) = u(L) - \sum_{i= 1}^{\ell} u(K_i),
\label{eq:obs_pos}
\end{equation}~%
where $\lk(L)$ is \dfn{the total linking number}, that is the sum of $\lk(K_i,K_j)$, for  all $i<j$.
\end{thm}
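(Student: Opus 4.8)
The plan is to prove the chain of equalities by exhibiting a cycle of inequalities, almost all of which hold for an arbitrary link, and then isolating the single step that genuinely uses positivity. Throughout I read $\lk(L)$ as $\sum_{i<j}|\lk(K_i,K_j)|$, which coincides with the total linking number exactly because a positive diagram has $\lk(K_i,K_j)\ge 0$ for every pair.

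First I would record the elementary inequalities. Since a completely split link has vanishing pairwise linking numbers, and since only crossing changes between distinct components alter a linking number (each by $\pm 1$, on a single pair), any splitting sequence must contain at least $|\lk(K_i,K_j)|$ changes affecting the pair $\{K_i,K_j\}$; summing over pairs gives $\sum_{i<j}|\lk(K_i,K_j)|\le \wsp(L)\le \ssp(L)$, the second inequality holding because a strong splitting sequence is in particular a splitting sequence. For positive $L$ this reads $\lk(L)\le \wsp(L)\le \ssp(L)$. On the upper side, first splitting $L$ completely ($\wsp(L)$ changes, producing $K_1\sqcup\cdots\sqcup K_\ell$) and then unknotting each component ($u(K_i)$ changes) unlinks $L$, so $u(L)\le \wsp(L)+\sum_i u(K_i)$, that is $u(L)-\sum_i u(K_i)\le \wsp(L)$.

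Next I would prove the matching lower bound $\lk(L)\le u(L)-\sum_i u(K_i)$, still valid for any link. Fix an optimal unlinking sequence and separate its crossing changes into mixed ones (between distinct components) and pure ones (self-crossings of a single component). As above, the mixed changes are the only ones affecting linking numbers, so there are at least $\sum_{i<j}|\lk(K_i,K_j)|$ of them. The key observation is that a mixed crossing change leaves the isotopy type of each individual component untouched: erasing the other strands shows the sub-diagram of $K_i$ is literally unchanged, so the only moves altering $K_i$ as a knot are its own pure changes, and since the sequence unknots $K_i$ there are at least $u(K_i)$ of these for each $i$. Adding up yields $u(L)\ge \sum_{i<j}|\lk(K_i,K_j)|+\sum_i u(K_i)$, which for positive $L$ is $\lk(L)+\sum_i u(K_i)$.

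At this point everything is pinched: $\lk(L)\le \wsp(L)\le \ssp(L)$ and $\lk(L)\le u(L)-\sum_i u(K_i)\le \wsp(L)$. The whole theorem therefore reduces to the single inequality $\ssp(L)\le \lk(L)$, and this is where I expect the real work—and the essential use of positivity—to lie. The crucial structural feature is that in a positive diagram the number of crossings between $K_i$ and $K_j$ is exactly $2\lk(K_i,K_j)$; in particular two components cross if and only if they link, and $\lk(L)=0$ forces a diagram with no inter-component crossings, hence an honestly split link. Building on this I would separate components one at a time, showing that $K_i$ can be split off from the rest using exactly $\sum_{j\neq i}\lk(K_i,K_j)$ inter-component crossing changes while leaving a positive diagram on which to induct. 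The main obstacle is precisely that zeroing the linking numbers does not in general split a link—the Whitehead link, which is not positive and, by Theorem~\ref{teo:ExampleConctoQP}, not even concordant to a quasi-positive link, is the standard warning—so the argument must exploit positivity to guarantee that the $\lk(L)$ well-chosen changes actually produce a completely split link. Granting $\ssp(L)\le \lk(L)$, the two chains collapse: $\lk(L)\le \wsp(L)\le \ssp(L)\le \lk(L)$ forces $\lk(L)=\wsp(L)=\ssp(L)$, and then $\lk(L)\le u(L)-\sum_i u(K_i)\le \wsp(L)=\lk(L)$ gives the final equality.
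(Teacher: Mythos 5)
Your reduction is sound as far as it goes, and it mirrors the paper's own scaffolding: the count of mixed versus self crossing changes in an optimal unlinking sequence is exactly the paper's Lemma \ref{lem:sharplinkingbound}, and the lower bound $\lk(L)\le\wsp(L)\le\ssp(L)$ is Lemma \ref{lem:lkleqm}. But there is a genuine gap precisely at the step you flag and then ``grant'': the inequality $\ssp(L)\le\lk(L)$ is never proved, and since this is the only place positivity enters, what you have written reduces the theorem to its essential content rather than proving it. Moreover, the strategy you sketch for that step (split off one component at a time, ``leaving a positive diagram on which to induct'') would not run as described: a crossing change destroys positivity of the diagram, so after the first stage there is no positive diagram left to induct on. The paper's argument needs no induction and no preservation of positivity. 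For an \emph{arbitrary} diagram $D$, the link can be completely split using mixed crossing changes only, at most half of the crossings between distinct components sufficing (stack the components in a suitable vertical order; for each pair one changes either the crossings where the first passes over the second or vice versa, and a suitable choice of order costs at most half the inter-component crossings). In a positive diagram every crossing between $K_i$ and $K_j$ is positive, so there are exactly $2\lk(K_i,K_j)$ of them, and the stacking procedure uses exactly $\sum_{i<j}\lk(K_i,K_j)$ changes; this gives $\ssp(L)\le\lk(L)$ in one stroke. This is Proposition \ref{prop:sharp lk bound I}, stated there for the slightly larger class of simply-linked links.

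There is also a second, repairable, error: you justify $u(L)-\sum_i u(K_i)\le\wsp(L)$ by asserting that a minimal weak splitting sequence produces $K_1\sqcup\cdots\sqcup K_\ell$. That is false in general: a weak splitting sequence may use self-crossing changes, which alter the isotopy types of the components, so the resulting completely split link need not have components $K_1,\dots,K_\ell$ (this is exactly why $\wsp$ and $\ssp$ can differ, as for the links $L'(n)$ of Proposition \ref{pro:indep_pos_obs}). The splitting-then-unknotting argument is only valid for the \emph{strong} splitting number, yielding $u(L)\le\ssp(L)+\sum_i u(K_i)$; this is what the paper uses in Lemma \ref{lem:sharplinkingbound}, and it would suffice for your final assembly, since by that point you already have $\wsp(L)=\ssp(L)=\lk(L)$.
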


Before going to the applications of Theorem \ref{teo:obstruction_p}, we pause to comment on our result. %
First, we compare our result with other known criteria: the non-negativity of the linking matrix, the positivity of sub-links, and the negativity of the signature $\sigma$ \cite{Phavenegsigniii,Phavenegsignii,Phavenegsign}.\\
Let us start by proving that Equation \eqref{eq:obs_pos} implies the non-negativity of the linking matrix.  Denote by $\vert \lk \vert (L)$ the sum of all the absolute values of the linking numbers between different components. Lemma \ref{lem:lkleqm} implies that
\[ \lk(L) \leq |\lk| (L) \leq \ssp(L).\]
Therefore, if $\lk(L) =\ssp(L)$ then $\vert \lk \vert(L) =\lk (L)$. Hence all linking numbers must equal their absolute value, and thus they must be non-negative.
Moreover, it is also known that if the link is positive and is not completely split, then there must be at least a non-zero linking number. This is also implied by Equation \eqref{eq:obs_pos}; in fact, it follows directly from the definitions that a link is completely split if, and only if, $\wsp(L) = 0$. Therefore, if a link is positive and is not completely split, then \eqref{eq:obs_pos} implies $\lk(L) = \wsp(L) >0$. So at least one among the linking numbers must be positive.
In the next proposition we show that Theorem~\ref{teo:obstruction_p} provides a strictly stronger obstruction than the linking matrix, and  that it is independent from the positivity of sub-links, and the sign of the signature.
\begin{prop}\label{pro:indep_pos_obs}
There exists a family of $2$-component links $\{ L'(n)\}_{n\geq 2}$ with positive components, negative signature, and positive linking matrix, such  that $\wsp (L'(n)) \neq \ssp(L'(n))$ for all $n$.
Furthermore, there are infinitely many  $2$-component links with positive signature, and a negative component, which satisfy \eqref{eq:obs_pos}.
\end{prop}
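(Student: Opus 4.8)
The two assertions are independent, and I would prove them by separate constructions.

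\emph{First family.} By Theorem~\ref{teo:obstruction_p} a positive link satisfies $\lk(L)=\wsp(L)=\ssp(L)$, so to produce non-positive links that pass the three classical tests it suffices to build two-component links $L'(n)$ with positive components, negative signature, and positive linking matrix for which $\wsp(L'(n))\neq\ssp(L'(n))$. The shape of such an example is dictated by the following elementary observation, which I would record first. A self-crossing change preserves every linking number, whereas an inter-component change alters $\lk(K_1,K_2)$ by exactly $\pm 1$; hence any weak splitting contains at least $|\lk|(L)$ inter-component changes. Consequently, if $\wsp(L)=|\lk|(L)$ then an optimal weak splitting consists only of inter-component changes and is already a strong splitting, forcing $\ssp(L)=\wsp(L)$. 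Thus $\wsp\neq\ssp$ is possible only when $|\lk|(L)<\wsp(L)<\ssp(L)$: the link must carry geometric linking beyond its homological linking number, part of which is removable by a self-crossing change but not by inter-component changes alone.

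With this in mind, I would engineer a link in which a single self-crossing change on $K_1$ re-knots that component and thereby releases several otherwise essential clasps with $K_2$, while a fixed number $m>0$ of genuinely positive inter-component clasps make $\lk(K_1,K_2)=m>0$. Keeping the remainder of the diagram positive guarantees that the components are positive knots and, by the standard results on signatures of (almost-)positive links \cite{Phavenegsign}, that $\sigma(L'(n))<0$; the linking matrix is positive by the crossing count. To obtain an infinite family I would either let the base gadget depend on a parameter, or simply insert $n$ further positive full twists between the two components, which raises $\lk$, $\wsp$ and $\ssp$ together and so preserves the strict chain $|\lk|<\wsp<\ssp$ for every $n\geq 2$.

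The verification then splits into an easy part and a hard part. Positivity of the components and of the linking matrix, and negativity of the signature, are read off the diagram as above; the upper bound on $\wsp(L'(n))$ comes from exhibiting the self-crossing change together with the $m+n$ inter-component changes that undo the positive clasps. The essential point, and the step I expect to be the main obstacle, is the lower bound $\ssp(L'(n))>\wsp(L'(n))$. Here I would exploit that a strong splitting preserves the knot types of the components, so it must carry $L'(n)$ to the split union $K_1\sqcup K_2$ with $\sigma=\sigma(K_1)+\sigma(K_2)$; since each crossing change moves the signature by at most $2$, this yields $\ssp(L'(n))\geq \tfrac12\bigl|\sigma(L'(n))-\sigma(K_1)-\sigma(K_2)\bigr|$. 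By contrast, the weak splitting is free to re-knot the components and reach a split union whose total signature is close to $\sigma(L'(n))$, so the same estimate does not obstruct it. The crux is to design the clasps so that the discrepancy $|\sigma(L'(n))-\sigma(K_1)-\sigma(K_2)|$ strictly exceeds twice the explicit weak-splitting count, uniformly in $n$; a Khovanov-homological splitting bound in the spirit of Batson--Seed, or a Milnor/Sato--Levine invariant of the sub-link, could serve as an alternative obstruction should the signature estimate prove too coarse.

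\emph{Second family.} For the second statement I would use split unions, for which \eqref{eq:obs_pos} holds automatically. If $L=K_1\sqcup K_2$ then $\lk(L)=0$ and $\wsp(L)=\ssp(L)=0$ because $L$ is already completely split; moreover the signature is additive under split union, and the classical bound $u(L)\geq\tfrac12|\sigma(L)|$ together with $u(L)\leq u(K_1)+u(K_2)$ pins down $u(L)=u(K_1)+u(K_2)$ whenever the signature bound is sharp on the pieces. Taking $K_1=K_2=\overline{T(2,2n+1)}$, the left-handed $(2,2n+1)$-torus knots, gives $\sigma(L)=4n>0$, two negative components, and $u(L)=2n=u(K_1)+u(K_2)$, so all four terms of \eqref{eq:obs_pos} vanish. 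These links are pairwise distinct, hence furnish infinitely many two-component links with positive signature and a negative component which satisfy \eqref{eq:obs_pos}, and which are manifestly non-positive.
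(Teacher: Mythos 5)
Your first construction has a genuine gap: you never actually produce the family. Your opening reduction (that $\wsp\neq\ssp$ forces $|\lk|(L)<\wsp(L)<\ssp(L)$, via Lemma \ref{lem:lkleqm}) is correct and is a sensible guide, but everything after it is a design brief rather than a proof. You do not exhibit a concrete link, you do not verify positivity of the components, negativity of the signature, or positivity of the linking matrix on an actual diagram, and --- as you yourself flag --- the crucial inequality $\ssp(L'(n))>\wsp(L'(n))$ is left open, with only candidate tools named (signature jumps, Batson--Seed, Sato--Levine). That inequality is precisely the hard step, and it is where the paper leans on external input: it takes the explicit alternating family $L'(k)$ of Figure \ref{fig:lk=2}, reads off $\lk(L'(k))=k-1$, computes $\sigma(L'(k))=1-2k$ by Traczyk's combinatorial formula for alternating diagrams \cite{CombSign}, and then invokes \cite[Proposition 3.7]{CavalloCollari} for $\wsp(L'(k))\neq\ssp(L'(k))$. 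Your proposed signature estimate for $\ssp$ is also not obviously adequate as a replacement: to conclude you would need $\bigl|\sigma(L)-\sigma(K_1)-\sigma(K_2)\bigr|>2\wsp(L)$ while simultaneously keeping $\sigma(L)<0$, the components positive, and the linking matrix positive, and you give no reason these constraints are compatible. As it stands, the first assertion of the proposition is not proved.

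Your second construction is correct, and it is genuinely different from the paper's: you take the split union $T(2,2n+1)^*\sqcup T(2,2n+1)^*$, whereas the paper takes the connected sum $L'''(n)=T(2,2)\# T(2,2n+1)^*$, which is simply-linked with $\lk=1$, so that \eqref{eq:obs_pos} follows at once from Proposition \ref{prop:sharp lk bound I}. Your route needs the additivity $u(K_1\sqcup K_2)=u(K_1)+u(K_2)$, which you recover from the signature bound (sharp for negative torus knots, using additivity of $\sigma$ under split union); note that you could avoid this side argument entirely, since a split union of knots is itself simply-linked ($|\lk|=0$ and a diagram with no crossings between components), so Proposition \ref{prop:sharp lk bound I} already gives $u(L)-\sum_i u(K_i)=0$. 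Either way, this half stands; the proposition as a whole does not, because of the missing first family.
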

Finally, we test the efficiency of Theorem \ref{teo:obstruction_p} on prime links with less than $8$ crossings. The result is contained in Table \ref{tab:small_linksI}. In this table, which is of independent interest, we listed all prime links with less than $8$ crossings, their positivity, their braid-positivity, and whether or not our criteria (ie.~Theorems \ref{teo:obstruction_p} and \ref{teo:obstruction_pb}) can be used to effectively obstruct these properties.

Now, let us turn to the applications of Theorem \ref{teo:obstruction_p}. Nakamura in {\cite[Theorem 5.1]{Nakamura}} proved that if $K$ is a positive knot, then $u(K)=1$ if, and only if, $K$ is a (non-trivial) twist knot.~%
As an application of Theorem \ref{teo:obstruction_p}, we extend Nakamura's result to multi-component links.

\begin{prop}\label{prop:u=1}
Let $L$ be a positive link. Then, $u(L) = 1$ if, and only if, $L$ is the split union of a (possibly empty) unlink with either the positive Hopf link, or a (non-trivial) twist knot.
\end{prop}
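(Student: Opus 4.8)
The plan is to prove the equivalence in Proposition~\ref{prop:u=1} by combining Theorem~\ref{teo:obstruction_p} with the classification of the two extreme cases that can arise when the unlinking number is $1$. The ``if'' direction is the easy half: the positive Hopf link has $u=1$, a non-trivial twist knot is a positive knot with $u=1$ by Nakamura's result, and unlinking number is additive under split union with an unlink (adding split unknotted components changes neither the unlinking number nor positivity). So the forward implication reduces to checking that each listed model indeed has unlinking number exactly $1$, which is routine.

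For the ``only if'' direction, suppose $L = K_1 \cup \cdots \cup K_\ell$ is positive with $u(L)=1$. First I would analyse the constraint coming from Equation~\eqref{eq:obs_pos}: since
\[ \lk(L) = u(L) - \sum_{i=1}^{\ell} u(K_i) = 1 - \sum_{i=1}^{\ell} u(K_i),\]
and the total linking number $\lk(L)$ is non-negative (each $\lk(K_i,K_j)\geq 0$ by the discussion following Theorem~\ref{teo:obstruction_p}), while each $u(K_i)\geq 0$, the integer $1 - \sum_i u(K_i)$ must be non-negative. This forces exactly two alternatives: either $\lk(L)=1$ and every component is an unknot ($\sum_i u(K_i)=0$), or $\lk(L)=0$ and exactly one component $K_{i_0}$ has $u(K_{i_0})=1$ with all others unknotted. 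I would treat these two cases separately.

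In the second case, $\lk(L)=0$ together with $\wsp(L)=\lk(L)=0$ from Equation~\eqref{eq:obs_pos} shows (using that $\wsp(L)=0$ characterises completely split links, as noted in the excerpt) that $L$ is completely split, hence $L$ is the split union of its components. All but one component is an unknot, and the remaining component $K_{i_0}$ is a positive knot with $u(K_{i_0})=1$, so by Nakamura's theorem \cite[Theorem~5.1]{Nakamura} it is a non-trivial twist knot; this yields the twist-knot part of the conclusion. In the first case, all components are unknots and $\wsp(L)=\lk(L)=1$, so exactly one crossing change between distinct components produces a completely split link, i.e.\ an unlink of unknots. The main obstacle is here: I must show that a positive link which becomes an unlink after a single inter-component crossing change, with all components unknotted and total linking number $1$, must be the positive Hopf link split-unioned with an unlink. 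The idea is that the single crossing change realising $\ssp(L)=1$ involves only two components $K_a,K_b$ (with $\lk(K_a,K_b)=1$), and all other components must already be split from everything; the two interacting components form a positive $2$-component link with unknotted components, linking number $1$, and unlinking number $1$, and I would argue that such a positive link is forced to be the positive Hopf link by a minimal-crossing or genus/Seifert-surface argument (a positive diagram realising these numerical invariants has a single positive clasp). Isolating and rigorously justifying this last rigidity statement---that the numerical constraints pin down the positive Hopf link uniquely up to split unknots---is the step I expect to require the most care, and I would lean on the sharpness of the slice-torus Bennequin inequality for positive links to control the Seifert genus and rule out any more complicated clasping.
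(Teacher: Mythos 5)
Your reduction is sound and matches the paper's: both proofs apply Theorem~\ref{teo:obstruction_p} to get $\lk(L)+\sum_i u(K_i)=1$, split into the cases $\lk(L)=0$ (completely split, then Nakamura's theorem) and $\lk(L)=1$ (all components unknotted), and the $\lk(L)=0$ case is handled identically. The gap is in the $\lk(L)=1$ case: the rigidity statement you isolate -- that a positive link with unknotted components and total linking number $1$ must be a split union of an unlink with the positive Hopf link -- is precisely what the paper proves as a separate lemma (Proposition~\ref{prop:hopf}), and you do not prove it; you only name candidate strategies (``minimal-crossing or genus/Seifert-surface argument'', sharpness of the slice-torus Bennequin inequality). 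Those strategies are unlikely to close the gap as stated, because numerical invariants (genus, $\chi_4$, $\nu$, unlinking number, linking number) do not by themselves pin down an isotopy class; what is actually needed is a structural fact about positive diagrams. The paper's argument is short and diagrammatic: since every inter-component crossing in a positive diagram is positive, $\lk(K_a,K_b)=1$ forces \emph{exactly two} crossings between the two components in any positive diagram, so up to planar isotopy the diagram is a connected sum of a Hopf link with two knot diagrams (Figure~\ref{fig:hopf}); the hypothesis that both components are unknots then forces both knot summands to be trivial.

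A secondary weak point: you deduce from $\ssp(L)=1$ that the single mixed crossing change involves two components $K_a,K_b$ and that ``all other components must already be split from everything.'' That implication is not immediate (a splitting sphere for the modified link could a priori pass through the ball where the crossing change occurs). The paper avoids this entirely by staying diagrammatic: since positive diagrams are simply-linked and $\lk(L)=1$, the positive diagram has only two inter-component crossings, so every component other than $K_a,K_b$ has no crossings with the rest of the diagram and is therefore split off, and being unknotted these components form a split unlink. You should either adopt this diagrammatic route or supply a separate argument for your splitting claim.
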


Moreover, we use  Theorem \ref{teo:obstruction_p} to obtain a characterisation of all links with unknotting number $2$.

\begin{prop}\label{prop:u=2}
Let $L$ be a positive link. Then, $u(L) = 2$ if, and only if, $L$ is the split union of a (possibly empty) unlink and one of the following
\begin{enumerate}
\item a positive link with total linking number $2$ and unknotted components;
\item  a connected sum of a positive Hopf link and a positive twist knot;
\item the split union of a positive Hopf link and a positive twist knot;
\item  the split union of two non-trivial positive twist knots;
\item  a positive knot with unknotting number $2$.
\end{enumerate} 
Furthermore, each of the above families contains infinitely-many links.
\end{prop}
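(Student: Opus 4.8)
The plan is to run the whole argument through the identity furnished by Theorem~\ref{teo:obstruction_p}. For a positive link $L=K_1\cup\cdots\cup K_\ell$ that identity reads
\[
u(L)=\lk(L)+\sum_{i=1}^{\ell}u(K_i),
\]
and every term on the right is a non-negative integer, since the linking numbers of a positive link are non-negative (as explained after Theorem~\ref{teo:obstruction_p}). The first observation is that this identity upgrades to additivity of $u$ under split union: writing $L=\Sigma_1\sqcup\cdots\sqcup\Sigma_m$ as the split union of its maximal non-split pieces, applying the identity to each $\Sigma_j$ and summing gives $u(L)=\sum_j u(\Sigma_j)$, because the linking numbers and the component unknotting numbers are distributed among the $\Sigma_j$. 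Since $u(\Sigma_j)=0$ forces $\Sigma_j$ to be an unknot (a non-split positive link with $u=0$ is an unknot), and each non-trivial piece has $u(\Sigma_j)\ge 1$, the hypothesis $u(L)=2$ leaves only two possibilities: either two non-trivial pieces each with $u=1$, or a single non-trivial piece with $u=2$, all remaining pieces being split unknots which together form the unlink summand in the statement.

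I would dispatch the first possibility with Proposition~\ref{prop:u=1}: a non-split positive link with $u=1$ is either the positive Hopf link or a non-trivial positive twist knot. Taking the two pieces to be (twist knot, twist knot), (Hopf link, twist knot), or (Hopf link, Hopf link) yields exactly cases (4), (3), and (1) respectively, the last because two Hopf links form a positive link with unknotted components and total linking number $2$. For the second possibility I would split on $\lk(\Sigma)$ for the single piece $\Sigma$ with $u(\Sigma)=2$: if $\Sigma$ is a knot we land in case (5); if $\Sigma$ has several components then, being non-split and positive, it has $\lk(\Sigma)\ge 1$, so either $\lk(\Sigma)=2$ with all components unknotted, giving case (1), or $\lk(\Sigma)=1$ with a single twist-knot component and the rest unknots, which must be shown to be case (2), the connected sum of a positive Hopf link and a positive twist knot.

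The main obstacle is precisely this last identification, together with the implicit claim that the relevant pieces have exactly two components. The numerical budget only tells us how the unknotting number is spent; turning it into a geometric decomposition requires structural input about positive links. Concretely, I would use that $\ssp(\Sigma)=\lk(\Sigma)$ (Theorem~\ref{teo:obstruction_p}), so that a single inter-component crossing change completely splits $\Sigma$; this, combined with the fact that a positive link which is not completely split must carry a non-zero linking number (so that a component with vanishing linking to all others splits off), rules out any spurious extra component and forces $\Sigma$ to be the two-component linked pair. Finally one identifies this pair with a Hopf summand by feeding the resolved (split) diagram back into Proposition~\ref{prop:u=1}, recognising the once-clasped, linking-number-one configuration as a connected sum of the Hopf link with the twist knot carried on one strand. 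I expect these realisation steps -- ruling out extra components and recognising the connect-sum/Hopf structure -- to be the delicate part, since they rely on the same structural properties of positive links that underlie Nakamura's theorem and Proposition~\ref{prop:u=1}, rather than on the formula alone.

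The converse direction and the infinitude claim are routine and I would treat them briefly: each listed family is positive (split unions and connected sums of positive links, Hopf links, twist knots and positive knots are positive), and the formula computes $u=2$ in every case, while adjoining a split unlink changes neither positivity nor $u$. Infinitude follows since there are infinitely many positive twist knots feeding families (2)--(4), infinitely many positive knots with $u=2$ feeding family (5), and infinitely many positive links with unknotted components and total linking number $2$ feeding family (1).
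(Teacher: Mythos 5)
Your reduction is structurally sound and is essentially the paper's own approach: the identity $u(L)=\lk(L)+\sum_i u(K_i)$ from Theorem~\ref{teo:obstruction_p}, non-negativity of linking numbers for positive links, Proposition~\ref{prop:u=1} and Nakamura's theorem for the pieces contributing $u=1$. Organising the case analysis by maximal non-split split-pieces rather than by the value of $\lk(L)$ (as the paper does) is a cosmetic difference. The genuine gap is exactly at the step you flag as delicate: identifying a non-split piece $\Sigma$ with $u(\Sigma)=2$, $\lk(\Sigma)=1$, one positive twist-knot component and one unknotted component, as the connected sum of a positive Hopf link and a twist knot (family (2)). Your proposed mechanism --- ``feeding the resolved (split) diagram back into Proposition~\ref{prop:u=1}'' --- cannot work: Proposition~\ref{prop:u=1} applies to \emph{positive} links with unlinking number $1$, whereas $\Sigma$ has $u(\Sigma)=2$, and once you change a crossing the diagram is no longer positive, so nothing about positive links applies to the resolved picture. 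The missing ingredient is the diagrammatic argument of Proposition~\ref{prop:hopf}: in a positive diagram all crossings between two given components are positive, so $\lk(\Sigma)=1$ forces \emph{exactly two} crossings between the two components, and a diagram in which the two components cross exactly twice is planar-isotopic to the clasp picture of Figure~\ref{fig:hopf}, which exhibits $\Sigma$ as $H \#_{} K_a \#_{} K_b$, a connected sum of a Hopf link $H$ with the two component knots. Since one component is a positive twist knot and the other is unknotted, this gives family (2). This is precisely what the paper invokes (``by the same reasoning as in the proof of Proposition~\ref{prop:hopf}''), and it does not follow from the numerical identity alone.

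A second, smaller gap: the infinitude of families (1) and (5) is not routine and you do not prove it. For (5) one needs infinitely many positive knots with unknotting number exactly $2$; the paper produces them as connected sums $K \# K'$ of positive twist knots, where $u(K\# K')\leq 2$ by subadditivity and $u(K\# K')\geq 2$ because $u=1$ would force $K\# K'$ to be a twist knot by Nakamura's theorem, contradicting primeness of twist knots --- note that a lower bound for the unknotting number of a connected sum is not available in general, so some such ad hoc argument is required. For (1) the paper exhibits the explicit family $L''(k)$ of Figure~\ref{fig:lk=2} and distinguishes its members using the Tait crossing-number theorem for reduced alternating diagrams. Both constructions are part of the statement being proved and cannot be dismissed as immediate.
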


Finally, we focus on links which are obtained as the closure of positive braids, which we call  \dfn{positive-braid links}.~%
In this case we can strengthen the statement of Theorem \ref{teo:obstruction_p}.

\begin{thm}\label{teo:obstruction_pb}
If $L=K_1 \cup \cdots \cup K_\ell$ is a positive-braid link, then
\[ \nu(L) -\sum_{i=1}^{\ell} \nu(K_i) = \lk  (L) = \wsp(L) =\ssp(L), \]
 for any slice-torus link invariant $\nu$.~%
Furthermore, given any partition of $L$ into sub-links, say $L_1, ..., L_k$, we have
\[ \nu(L) -\sum_{i=1}^{k} \nu(L_i)= u(L) - \sum_{i= 1}^{k} u(L_i) = \sum_{1\leq i <j\leq k} \lk (L_i, L_j).\]
\end{thm}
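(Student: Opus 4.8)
The plan is to assemble the statement from three ingredients, each available for positive-braid links. First, every positive-braid link is a positive link, so the conclusion of Theorem~\ref{teo:obstruction_p} applies to $L$ and to each of its sub-links. Second, every positive-braid link is quasi-positive, so the slice-torus Bennequin equality $2\nu(M)-(\#\text{components of }M)=-\chi_4(M)$ holds for it: indeed $sl_{max}(M)=-\chi_4(M)$ collapses the chain $sl_{max}(M)\le 2\nu(M)-(\#\text{components})\le -\chi_4(M)$. Third, I would pin down $\chi_4$ explicitly: fixing a positive braid $\beta$ on $n$ strands with $c$ crossings whose closure is $L$, the braided (Bennequin) surface has $n$ disks and $c$ bands, so $\chi_4(L)\ge n-c$, while the slice-Bennequin inequality gives $sl(\beta)=c-n\le -\chi_4(L)$; hence $-\chi_4(L)=c-n$. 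The structural observation that makes all of this usable uniformly is that any sub-link $L_i$ of $L$ is again a positive-braid link: deleting from $\beta$ the strands not belonging to $L_i$ yields a positive braid $\beta_i$ on $n_i$ strands with $c_i$ crossings (exactly the crossings of $\beta$ between two strands of $L_i$) whose closure is $L_i$, so that $-\chi_4(L_i)=c_i-n_i$ as well.

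I would first prove the $\nu$-equality for an arbitrary partition $L=L_1\cup\cdots\cup L_k$, which simultaneously covers the finest partition of the first display and the general partition of the second. Since $L$ and each $L_i$ are quasi-positive, the Bennequin equality gives $2\nu(L)=(c-n)+\ell$ and $2\nu(L_i)=(c_i-n_i)+\ell_i$, where $\ell_i$ is the number of components of $L_i$. Using $\sum_i n_i=n$ and $\sum_i \ell_i=\ell$, one obtains
\[ 2\Big(\nu(L)-\sum_{i=1}^{k}\nu(L_i)\Big)=c-\sum_{i=1}^{k} c_i, \]
and the right-hand side is exactly the number of crossings of $\beta$ joining strands of two distinct sub-links. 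Because every crossing of a positive braid is positive, this count equals $2\sum_{i<j}\lk(L_i,L_j)$, whence $\nu(L)-\sum_i\nu(L_i)=\sum_{i<j}\lk(L_i,L_j)$. Specialising to the partition into the knot components $K_1,\dots,K_\ell$ gives $\nu(L)-\sum_i\nu(K_i)=\lk(L)$; combined with $\lk(L)=\wsp(L)=\ssp(L)$ from Theorem~\ref{teo:obstruction_p}, this yields the entire first display.

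For the unlinking number in the second display I would argue purely formally from Theorem~\ref{teo:obstruction_p}. Applying that theorem to the positive link $L$ and its components $K_1,\dots,K_\ell$ gives $u(L)-\sum_a u(K_a)=\sum_{a<b}\lk(K_a,K_b)$, and applying it to each positive sub-link $L_i$ and its components gives $u(L_i)-\sum_{K_a\subset L_i}u(K_a)=\sum_{K_a,K_b\subset L_i}\lk(K_a,K_b)$. Summing the latter over $i$ and subtracting from the former, every self-unknotting term $u(K_a)$ cancels (each $K_a$ lies in exactly one $L_i$), leaving the sum of $\lk(K_a,K_b)$ over pairs in distinct sub-links, which is precisely $\sum_{i<j}\lk(L_i,L_j)$. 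Hence $u(L)-\sum_i u(L_i)=\sum_{i<j}\lk(L_i,L_j)$, matching the $\nu$-computation and finishing the argument.

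The arithmetic here is routine bookkeeping, and I do not expect it to be the obstacle; the real content lies in the two structural facts that power the identities. The main point requiring care is the strand-deletion step: I must justify cleanly that passing to a sub-link $L_i$ is realised by deleting strands, that the result stays a positive braid, and hence that both $-\chi_4(L_i)=c_i-n_i$ and the Bennequin equality are genuinely available for \emph{every} $L_i$ and not only for $L$. Once this is in place, the relations $\sum_i n_i=n$, $\sum_i\ell_i=\ell$, and the crossing count $c-\sum_i c_i=2\sum_{i<j}\lk(L_i,L_j)$ follow directly, and no further analytic input is needed.
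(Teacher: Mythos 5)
Your proposal is correct and follows essentially the same route as the paper: the $\nu$-identity is obtained from quasi-positivity of the positive braid $\beta$ and its sub-braids (your Bennequin-surface computation of $-\chi_4=c-n$ is a re-derivation of Rudolph's equality \eqref{eq:sl=chi}, and your crossing count between sub-braids is exactly Lemma \ref{lem:KeyLemma}), while the unlinking-number identity comes from applying Theorem \ref{teo:obstruction_p} to $L$ and to each sub-link $L_i$ (legitimate, by your strand-deletion observation) and cancelling the $u(K_a)$ terms. The only cosmetic difference is that the paper routes this cancellation through the intermediate identity $\nu(L)-\sum_{a}\nu(K_a)=u(L)-\sum_{a}u(K_a)$ and converts to $\nu$-differences, whereas you convert directly to linking numbers; the logical content is identical.
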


As consequences of Theorem \ref{teo:obstruction_pb} we obtain two results; the first result is a formula for the unlinking number of positive links with positive-braid components (Proposition \ref{prop: unlinking pos and braid pos}).~%
The second result is the following combinatorial criterion to test whether a positive link is also positive-braid link.~%

\begin{prop}\label{prop:criterion from p to be bp}
Let $D$ be a positive diagram for $L =K_1 \cup ... \cup K_\ell$. Denote by $D_1,...,D_\ell$ the diagrams for $K_1,...,K_\ell$ obtained by deleting all but the corresponding component from $D$. If $L$ is  a positive-braid link, then
\begin{equation}
o(D) = \sum_i o(D_i)
\label{eq:pbcirc}
\end{equation}
where  $o(D')$ denotes the number of Seifert circles of $D'$.
\end{prop}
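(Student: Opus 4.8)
The plan is to rewrite the defining equation of a positive-braid link supplied by Theorem~\ref{teo:obstruction_pb} purely in terms of the combinatorial data of the diagrams, and then solve for $o(D)$. Let $c(D')$ denote the number of crossings of a diagram $D'$. First I would set up the bookkeeping of crossings. Since $D$ is positive, every crossing is positive, and the crossings split into self-crossings (lying on a single component) and mixed crossings (joining two distinct components). Deleting all but the $i$-th component turns $D$ into a positive diagram $D_i$ for $K_i$ whose crossings are precisely the self-crossings of $K_i$; writing $m(D)$ for the number of mixed crossings, this gives $c(D) = \sum_i c(D_i) + m(D)$. Because every mixed crossing is positive and the linking number of two components is half the signed count of crossings between them, the total linking number is $\lk(L) = \tfrac12 m(D)$. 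None of this uses the positive-braid hypothesis.

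The key input, and where I expect the real content to lie, is that a positive diagram realises the slice Euler characteristic. The surface produced by Seifert's algorithm on $D$ has Euler characteristic $o(D) - c(D)$, so pushing it into $\bD^4$ gives $-\chi_4(L) \le c(D) - o(D)$; the slice-torus Bennequin inequality gives $2\nu(L) - \ell \le -\chi_4(L)$; and Proposition~\ref{pro:combinatorialbound}, applied to the positive diagram $D$ (for which the writhe equals $c(D)$), supplies the reverse inequality $c(D) - o(D) \le 2\nu(L) - \ell$. Chaining these forces all of them to be equalities, so
\[ 2\nu(L) - \ell = c(D) - o(D). \]
The same argument applied to each positive diagram $D_i$ of the knot $K_i$ yields $2\nu(K_i) - 1 = c(D_i) - o(D_i)$. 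This sharpness of the combinatorial bound on positive diagrams is the only non-formal ingredient.

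Finally I would substitute into Theorem~\ref{teo:obstruction_pb}. Taking the partition of $L$ into its components $K_1,\dots,K_\ell$, that theorem gives $\nu(L) - \sum_i \nu(K_i) = \lk(L)$. Doubling and inserting the three displayed identities, the terms $\ell = \sum_i 1$ cancel and $c(D) - \sum_i c(D_i) = m(D)$, so
\[ 2\nu(L) - 2\sum_i \nu(K_i) = m(D) + \Big( \sum_i o(D_i) - o(D) \Big). \]
As the left-hand side equals $2\lk(L) = m(D)$, the $m(D)$ terms cancel and we obtain $o(D) = \sum_i o(D_i)$. After the middle step everything is routine bookkeeping; the genuine obstacle is the equality of the Euler characteristic of the canonical surface with the slice Euler characteristic for positive diagrams, invoked there.
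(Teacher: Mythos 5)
Your proof is correct, and it reaches the conclusion by a route that is structurally parallel to the paper's but rests on different key ingredients. The paper works with the unlinking number rather than directly with $\nu$: it invokes Kawamura's formula $2u(L) = x(D) - o(D) + \ell$ (Proposition \ref{prop:obstruction_pb_2}), upgrades it to arbitrary positive diagrams of positive-braid links via the identity $u = \nu$ (Remark \ref{rem:ubpfromposdiag}), feeds this into the defect formula $u(L) - \sum_i u(K_i) = \lk(L)$ of Theorem \ref{teo:obstruction_p}, and finishes with the same crossing count $x(D) - \sum_i x(D_i) = 2\lk(L)$ that you use. You instead stay entirely inside the slice-torus framework: your sharpness step $2\nu(L) - \ell = c(D) - o(D)$, obtained by chaining the pushed-in Seifert surface, the inequality $2\nu(L) - \ell \le -\chi_4(L)$, and the lower bound of Proposition \ref{pro:combinatorialbound}, replaces Kawamura's theorem; and your defect formula is the one for $\nu$ from Theorem \ref{teo:obstruction_pb} rather than the one for $u$. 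In effect the two proofs distribute the hypotheses differently: in the paper, braid-positivity enters through the diagram formula (Kawamura plus $u=\nu$) and mere positivity through the defect formula, whereas in yours mere positivity yields the diagram formula and braid-positivity enters only through Theorem \ref{teo:obstruction_pb}. What your route buys is independence from Kawamura's external result and from unlinking numbers altogether; what the paper's buys is the unlinking-number statement itself (Remark \ref{rem:ubpfromposdiag}), which it needs anyway for Proposition \ref{prop: unlinking pos and braid pos}. The only point you should make explicit is why the lower bound of Proposition \ref{pro:combinatorialbound} reduces to $c(D) - o(D) \le 2\nu(L) - \ell$: for a positive diagram $w(D) = c(D)$, and the correction term $2s_+(D) - 2\ell_s(D)$ is non-negative since $\ell_s(D) \le s_+(D)$ (in fact $s_+(D) = \ell_s(D)$ for positive diagrams, as the paper notes).
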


A classical obstruction to being a positive-braid link is fibredness;
a link is called \dfn{fibred} if its complement fibres over $\bS^1$, and positive-braid links are fibred~\cite{Stallings}.
The obstruction to being a positive-braid link given by Proposition \ref{prop:criterion from p to be bp} and by being fibred are independent. 
First, consider the $2$-component link $L6a1\{1\}$; this link is positive, has positive-braid components, but it is not fibred. This can be seen from the fact that the reduced Seifert graph of $D$ is not a tree  \cite[Theorem 5.11]{FKP}. 
However, the positive diagram $D$ for $L6n1\{1\}$ given in \cite{LinkInfo} satisfies \eqref{eq:pbcirc} -- cf.~Table \ref{tab:small_linksI}. In the other direction, we have the following example.

\begin{exa}\label{ex:pnotbp}
In Figure \ref{fig:p-non-bp} is depicted a positive diagram $D$ of a link $L = K_1 \cup K_2$.~%
A positive link is fibred if, and only if, the reduced Seifert graph of any positive diagram is a tree~\cite[Theorem~5.11]{FKP}. Using this criterion one can check directly  that $L$ is fibred. 
Clearly the components of $L$, being an unknot and a positive trefoil knot, are positive-braid knots.
One can verify that  $o(D)=6\neq 4 = o(D_1) + o(D_2)$, where $D_1$ and $D_2$ are as in Proposition \ref{prop:criterion from p to be bp}. %
It follows that $L$ cannot be a positive-braid link. In particular, $L$ is a fibred positive link, with positive-braid (actually algebraic) components which cannot be algebraic -- ie.~the link of an isolated singularity of a complex algebraic curve, which are cables of certain torus links, and known to be positive-braid links \cite{Milnor}. See also \cite{Dimca} for an overview on algebraic links.
\end{exa}

\subsection{Outline of the paper}

In Section \ref{sec:background} we review some background material. In Section \ref{sec:qp} we prove Theorems \ref{teo:MainFullQP} and \ref{teo:obstruction_qp}. Finally, in Section \ref{sec:pbandp} we concern ourselves with positive and positive-braid links.
Appendix A is dedicated to the proof of the fact that all links arise as sub-links of a quasi-positive link. Appendix B contains the table of positivity and braid-positivity of all prime links with less than $8$ crossings.
\subsection*{acknowledgements}
The author wishes to thank Alberto Cavallo for the useful discussions. Furthermore, the author wishes to extend his thanks to an anonymous referee, whose comments and suggestions led to a substantial improvement of the paper.

\section{Background material and notation}\label{sec:background}

The main scope of this section is to fix the notation and review some results needed in the proofs of the main theorems.~%
The section is organised as follows; we start with some facts concerning braids.~%
Then, we pass to the definition of self-linking number, and the description of some of its properties.~%
Afterward we  present some basic results on slice-torus link invariants.~%
We conclude with some basic facts on unlinking and splitting numbers.

\subsection{Braids}

The \dfn{braid group} on $n$-strands, here denoted by $B_n$, is the group generated by $\sigma_1, ..., \sigma_{n-1}$ (called \dfn{Artin generators}), and subject to the relations
\[ \sigma_{i}\sigma_{j} = \sigma_{j}\sigma_{i},\ \text{if }\vert i - j \vert > 1,\quad\text{and}\quad \sigma_{i}\sigma_{i+1}\sigma_i = \sigma_{i+1} \sigma_i \sigma_{i+1},\ \forall i,j.\]
Alternatively, one can define the braid group diagrammatically, the reader can consult \cite[Chapter 10]{Cromwell} for details, and refer to Figure~\ref{fig:braid} for a schematic description. 
A \dfn{braid} is a word in the Artin generators and their inverses.~%
In the rest of the paper no difference shall be made between the algebraic and the diagrammatic interpretations of the braid group.

By joining the start and end points of each strand of a braid,  as illustrated on the right side of Figure~\ref{fig:braid}, one obtains a link digram.~%
The link represented by such a diagram is  called the \dfn{closure} of the given braid.~%
A classical theorem of Alexander states that all links can be obtained as closed braids -- see \cite{Alexander}.

\begin{defn}
A braid is \dfn{quasi-positive} if it is the product of conjugates of Artin generators, and it is \dfn{positive} if it is the product of Artin generators.
A link is called \dfn{quasi-positive} (resp. \dfn{positive-braid}) if it is the closure of a quasi-positive (resp. positive) braid.
\end{defn}

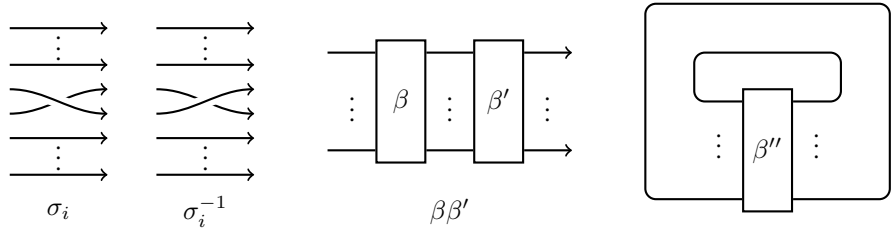
\begin{figure}[ht]
\begin{tikzpicture}[thick, scale =.65]
\draw[->] (-1,3.25) -- (1,3.25);
\node at (0,3) {$\vdots$};
\draw[->] (-1,2.5) -- (1,2.5);
\draw[->] (-1,1.5) .. controls +(.75,0) and +(-.75,0) .. (1,2);
\draw[white, line width = 5] (-1,2) .. controls +(.75,0) and +(-.75,0) .. (1,1.5);
\draw[->] (-1,2) .. controls +(.75,0) and +(-.75,0) .. (1,1.5);
\draw[->] (-1,1) -- (1,1);
\node at (0,.75) {$\vdots$};
\draw[->] (-1,.25) -- (1,.25);
\node at (0,-.5) {$\sigma_i$};

\begin{scope}[shift ={+(3,0)}]
\draw[->] (-1,3.25) -- (1,3.25);
\node at (0,3) {$\vdots$};
\draw[->] (-1,2.5) -- (1,2.5);
\draw[->] (-1,2) .. controls +(.75,0) and +(-.75,0) .. (1,1.5);
\draw[white, line width = 5] (-1,1.5) .. controls +(.75,0) and +(-.75,0) .. (1,2);
\draw[->] (-1,1.5) .. controls +(.75,0) and +(-.75,0) .. (1,2);
\draw[->] (-1,1) -- (1,1);
\node at (0,.75) {$\vdots$};
\draw[->] (-1,.25) -- (1,.25);
\node at (0,-.5) {$\sigma_i^{-1}$};
\end{scope}

\begin{scope}[shift  ={+(6.5,0)}]
\draw[->] (-1,2.75) -- (4,2.75);
\node at (-.5,1.75) {$\vdots$};
\draw[->] (-1,.75) -- (4,.75);
\draw[fill, white] (0,3) rectangle (1,0.5);
\draw (0,3) rectangle (1,0.5);
\node at (.5,1.75) {$\beta$};
\draw[fill, white] (2,3) rectangle (3,0.5);
\draw (2,3) rectangle (3,0.5);
\node at (2.5,1.75) {$\beta'$};
\node at (3.5,1.75) {$\vdots$};
\node at (1.5,1.75) {$\vdots$};

\node at (1.5,-.5) {$\beta\beta'$};
\end{scope}

\begin{scope}[shift ={+(13,-1)}]
\draw[rounded corners]  (0,2.75) rectangle (3,3.75);
\draw[rounded corners] (-1,.75) rectangle (4,4.75);
\draw[fill, white] (1,3) rectangle (2,0.5);
\draw (1,3) rectangle (2,0.5);
\node at (1.5,1.75) {$\beta''$};
\node at (2.5,2) {$\vdots$};
\node at (.5,2) {$\vdots$};

\end{scope}

\end{tikzpicture}
\caption{In the order from left to right: the Artin generator $\sigma_i$ and its inverse $\sigma_i^{-1}$, the product of two braids $\beta$ and $\beta'$, and the closure of a braid $\beta''$.}\label{fig:braid}
\end{figure}

We call a braid \dfn{alternating} if moving along each strand overpasses and underpasses alternate.~%
A braid is called \dfn{non-split} if its closure is not a split link. 

\begin{lemma}\label{lem:non-splitaltbraids}
A braid is non-split and alternating if, and only if, it satisfies following properties
\begin{enumerate}[(i)]
\item all $\sigma_i$ appear (possibly with negative exponents),
\item each $\sigma_i$ appears with exponents of a fixed sign (that is all positive or all negative),
\item $\sigma_{i}$ and $\sigma_{i+1}$ always appear with exponents of opposite signs.
\end{enumerate}
\end{lemma}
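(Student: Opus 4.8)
The plan is to prove both directions of the biconditional, relating the three combinatorial conditions on the braid word to the geometric properties of being non-split and alternating. I would first fix some notation: write the braid $\beta$ as a word $w$ in the letters $\sigma_1^{\pm 1}, \dots, \sigma_{n-1}^{\pm 1}$, and let $\widehat\beta$ denote its closure. Throughout I would keep in mind the standard correspondence between the strands of the braid and the geometry of the closure diagram.

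For the forward direction, suppose $\beta$ is non-split and alternating. Condition (i), that every $\sigma_i$ appears, is essentially forced by non-splitness: if some $\sigma_i$ were absent, then no strand crosses between the first $i$ positions and the last $n-i$ positions, so the closure decomposes as a split union of the closures of two sub-braids. I would make this precise by exhibiting an embedded $\bS^2$ separating the two groups of strands. Condition (ii), that each generator appears with a single sign, follows from the alternating hypothesis applied \emph{along a single crossing column}: at the crossings all coming from $\sigma_i$, consecutive crossings (read in the braid word) must alternate over/under along both of the two strands involved, and since the two strands at a $\sigma_i$-crossing always occupy the \emph{same} pair of adjacent positions, a sign change in the exponent of $\sigma_i$ would produce two crossings of the same type (both over or both under) in succession along a strand, contradicting alternation. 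Condition (iii) is the analogous statement \emph{across adjacent columns}: a strand passing through position $i{+}1$ participates alternately in $\sigma_i$- and $\sigma_{i+1}$-crossings, and tracing the over/under pattern along this strand shows the signs of $\sigma_i$ and $\sigma_{i+1}$ must be opposite. The careful bookkeeping here — tracking, for a fixed strand, the sequence of crossings it meets and verifying the global alternation pattern — is the step I expect to be the main obstacle, since one must argue it holds for \emph{every} strand simultaneously and handle the wrap-around from the closure.

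For the reverse direction, assume (i)--(iii) hold. Non-splitness follows from (i): since every generator is present, the permutation-closure diagram is connected in the relevant sense and one can rule out a separating sphere by the same positional argument run backwards (absence of a split would require some missing generator). To prove the diagram is alternating, I would use (ii) and (iii) to check the over/under pattern along an arbitrary strand. By (ii), all crossings in a fixed column $i$ have the same sign, so locally within a column there is no obstruction; by (iii), adjacent columns have opposite signs, which is exactly what guarantees that when a strand moves from a $\sigma_i$-crossing to a neighbouring $\sigma_{i+1}$-crossing (or vice versa) the over/under role flips. Combining these, I would argue that following any strand through the diagram, the crossings encountered strictly alternate, which is the definition of an alternating diagram.

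The key technical device throughout is a clean translation between the sign of an exponent of $\sigma_i$ (whether it is $\sigma_i$ or $\sigma_i^{-1}$) and whether the strand occupying the left position passes over or under the strand occupying the right position at that crossing. Once that dictionary is set up, conditions (ii) and (iii) become precisely the local alternation rules within a column and across adjacent columns. I anticipate the subtlety lies not in any one local check but in verifying global consistency: that the three local conditions, taken together over all columns, force genuine alternation along every strand of the \emph{closed} diagram, including the identifications made by closing up the braid. I would organise the argument by fixing a strand, listing the crossings it meets in cyclic order, and showing each condition contributes exactly the needed sign flip, so that no two consecutive crossings along the strand are of the same type.
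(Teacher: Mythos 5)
Your forward direction is sound and matches the paper: absence of some $\sigma_i$ visibly splits the closure by a sphere separating the first $i$ strands from the rest, and the equivalence of alternation with conditions (ii) and (iii) is exactly the column-by-column bookkeeping the paper dismisses as a straightforward verification. The genuine problem is in your reverse direction, specifically the claim that non-splitness ``follows from (i)'' because ``absence of a split would require some missing generator.'' That implication is false: a split link can perfectly well have a connected closed-braid diagram in which every generator appears. For instance, the closure of the word $\sigma_1\sigma_1^{-1}\in B_2$ is a connected diagram of the two-component unlink, which is split. A separating sphere for the closure is under no obligation to respect the braid positions (or the diagram at all), so no ``positional argument run backwards'' can rule it out; connectivity of a diagram never suffices by itself to conclude that the link it represents is non-split.

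What saves the lemma --- and this is precisely the point the paper isolates as the only non-trivial one --- is that the diagram is not merely connected but \emph{alternating}: a connected alternating diagram represents a non-split link. This is a genuine theorem (the paper invokes it as \cite[Corollary 7.6.4]{CromwellBook}; it belongs to the circle of results on alternating and homogeneous links and is proved via the Seifert graph or polynomial invariants), not something one can read off from where the strands sit. Your proposal never uses alternation in the non-splitness step, which is why it fails there; to repair it you must either quote this theorem, as the paper does, or reprove it, and reproving it is far beyond the local sign-flip bookkeeping that handles the rest of the lemma.
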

\begin{proof}
It is a straightforward verification that a braid satisfying (ii) and (iii) is alternating, and that it is not alternating otherwise. It is also immediate that if a braid does not satisfy (i) then it is split, since there are no crossing between a pair of adjacent strands. The only non-trivial point is that if a braid is alternating and satisfies (i) then it is non-split -- which follows from \cite[Corollary 7.6.4]{Cromwell}. 
\end{proof}

Let $\beta$ be a braid representing a link $L$ -- that is, the closure of $\beta$ is $L$ -- and  let $L'$ be a sub-link $L$.~%
By deleting from $\beta$ all strands which belong to $L\setminus L'$ in $\beta$'s closure, one obtains a braid $\beta'$ whose closure is $L'$.~%
We shall refer to $\beta'$ as the \dfn{sub-braid of $\beta$ associated to $L'$}.~%
Note that $\beta'$ does not belong to the same braid group as $\beta$ -- cf.~Figure \ref{fig:subbraid}.

\begin{figure}[ht]
\centering
\begin{tikzpicture}[thick, xscale = .45 , yscale = .75]
\begin{scope}[shift ={+(-2,0)}]
\draw[ ] (-1,2.5) -- (1,2.5);
\draw[ ] (-1,1) -- (1,1);
\draw[ ,red] (-1,1.5) .. controls +(.75,0) and +(-.75,0) .. (1,2);
\draw[white, line width = 5] (-1,2) .. controls +(.75,0) and +(-.75,0) .. (1,1.5);
\draw[ ,red] (-1,2) .. controls +(.75,0) and +(-.75,0) .. (1,1.5);
\end{scope}
\draw[ ] (-1,2.5) -- (1,2.5);
\draw[ ] (-1,1) -- (1,1);
\draw[ ,red] (-1,1.5) .. controls +(.75,0) and +(-.75,0) .. (1,2);
\draw[white, line width = 5] (-1,2) .. controls +(.75,0) and +(-.75,0) .. (1,1.5);
\draw[ ,red] (-1,2) .. controls +(.75,0) and +(-.75,0) .. (1,1.5);
\begin{scope}[shift ={+(2,0)}]
\draw[ ] (-1,2.5) -- (1,2.5);
\draw[ ] (-1,1) -- (1,1);
\draw[ ,red] (-1,1.5) .. controls +(.75,0) and +(-.75,0) .. (1,2);
\draw[white, line width = 5] (-1,2) .. controls +(.75,0) and +(-.75,0) .. (1,1.5);
\draw[ ,red] (-1,2) .. controls +(.75,0) and +(-.75,0) .. (1,1.5);
\end{scope}
\begin{scope}[shift ={+(4,.5)}]
\draw[ ] (-1,.5) -- (1,.5);
\draw[ ,red] (-1,1) -- (1,1);
\draw[ ,red] (-1,1.5) .. controls +(.75,0) and +(-.75,0) .. (1,2);
\draw[white, line width = 5] (-1,2) .. controls +(.75,0) and +(-.75,0) .. (1,1.5);
\draw[ ] (-1,2) .. controls +(.75,0) and +(-.75,0) .. (1,1.5);
\end{scope}
\begin{scope}[shift ={+(10,.5)}]
\draw[ ,red] (-1,.5) -- (1,.5);
\draw[ ] (-1,1) -- (1,1);
\draw[ ] (-1,1.5) .. controls +(.75,0) and +(-.75,0) .. (1,2);
\draw[white, line width = 5] (-1,2) .. controls +(.75,0) and +(-.75,0) .. (1,1.5);
\draw[ ,red] (-1,2) .. controls +(.75,0) and +(-.75,0) .. (1,1.5);
\end{scope}
\begin{scope}[shift ={+(6,-.5)}]
\draw[ ] (-1,2.5) -- (1,2.5);
\draw[ ,red] (-1,3) -- (1,3);
\draw[ ] (-1,1.5) .. controls +(.75,0) and +(-.75,0) .. (1,2);
\draw[white, line width = 5] (-1,2) .. controls +(.75,0) and +(-.75,0) .. (1,1.5);
\draw[ ,red] (-1,2) .. controls +(.75,0) and +(-.75,0) .. (1,1.5);
\end{scope}
\begin{scope}[shift ={+(12,-.5)}]
\draw[->,red] (-1,2.5) -- (1,2.5);
\draw[->] (-1,3) -- (1,3);
\draw[->,red] (-1,1.5) .. controls +(.75,0) and +(-.75,0) .. (1,2);
\draw[white, line width = 5] (-1,2) .. controls +(.75,0) and +(-.75,0) .. (1,1.5);
\draw[->] (-1,2) .. controls +(.75,0) and +(-.75,0) .. (1,1.5);
\end{scope}
\begin{scope}[shift ={+(8,0)}]
\draw[ ,red] (-1,2.5) -- (1,2.5);
\draw[ ,red] (-1,1) -- (1,1);
\draw[ ] (-1,1.5) .. controls +(.75,0) and +(-.75,0) .. (1,2);
\draw[white, line width = 5] (-1,2) .. controls +(.75,0) and +(-.75,0) .. (1,1.5);
\draw[ ] (-1,2) .. controls +(.75,0) and +(-.75,0) .. (1,1.5);
\end{scope}
\node at (5,.5) {$\beta$};

\begin{scope}[shift ={+(-7,-2)},scale = .6]
\begin{scope}[shift ={+(-2,0)}]
\draw[ ,red] (-1,1.5) .. controls +(.75,0) and +(-.75,0) .. (1,2);
\draw[white, line width = 5] (-1,2) .. controls +(.75,0) and +(-.75,0) .. (1,1.5);
\draw[ ,red] (-1,2) .. controls +(.75,0) and +(-.75,0) .. (1,1.5);
\end{scope}
\draw[ ,red] (-1,1.5) .. controls +(.75,0) and +(-.75,0) .. (1,2);
\draw[white, line width = 5] (-1,2) .. controls +(.75,0) and +(-.75,0) .. (1,1.5);
\draw[ ,red] (-1,2) .. controls +(.75,0) and +(-.75,0) .. (1,1.5);
\begin{scope}[shift ={+(2,0)}]
\draw[ ,red] (-1,1.5) .. controls +(.75,0) and +(-.75,0) .. (1,2);
\draw[white, line width = 5] (-1,2) .. controls +(.75,0) and +(-.75,0) .. (1,1.5);
\draw[ ,red] (-1,2) .. controls +(.75,0) and +(-.75,0) .. (1,1.5);
\end{scope}
\begin{scope}[shift ={+(4,.5)}]
\draw[ ,red] (-1,1) -- (1,1);
\draw[ ,red] (-1,1.5) .. controls +(.75,0) and +(-.75,0) .. (1,2);
\end{scope}
\begin{scope}[shift ={+(10,.5)}]
\draw[ ,red] (-1,.5) -- (1,.5);
\draw[ ,red] (-1,2) .. controls +(.75,0) and +(-.75,0) .. (1,1.5);
\end{scope}
\begin{scope}[shift ={+(6,-.5)}]
\draw[ ,red] (-1,3) -- (1,3);
\draw[ ,red] (-1,2) .. controls +(.75,0) and +(-.75,0) .. (1,1.5);
\end{scope}
\begin{scope}[shift ={+(12,-.5)}]
\draw[->,red] (-1,2.5) -- (1,2.5);
\draw[->,red] (-1,1.5) .. controls +(.75,0) and +(-.75,0) .. (1,2);
\end{scope}
\begin{scope}[shift ={+(8,0)}]
\draw[ ,red] (-1,2.5) -- (1,2.5);
\draw[ ,red] (-1,1) -- (1,1);
\end{scope}
\node at (5,.5) {$\beta'$};
\end{scope}

\begin{scope}[shift ={+(9,-2)},scale = .6]
\begin{scope}[shift ={+(-2,0)}]
\draw[ ] (-1,2.5) -- (1,2.5);
\draw[ ] (-1,1) -- (1,1);
\end{scope}
\draw[ ] (-1,2.5) -- (1,2.5);
\draw[ ] (-1,1) -- (1,1);
\begin{scope}[shift ={+(2,0)}]
\draw[ ] (-1,2.5) -- (1,2.5);
\draw[ ] (-1,1) -- (1,1);
\end{scope}
\begin{scope}[shift ={+(4,.5)}]
\draw[ ] (-1,.5) -- (1,.5);
\draw[ ] (-1,2) .. controls +(.75,0) and +(-.75,0) .. (1,1.5);
\end{scope}
\begin{scope}[shift ={+(10,.5)}]
\draw[ ] (-1,1) -- (1,1);
\draw[ ] (-1,1.5) .. controls +(.75,0) and +(-.75,0) .. (1,2);
\end{scope}
\begin{scope}[shift ={+(6,-.5)}]
\draw[ ] (-1,2.5) -- (1,2.5);
\draw[ ] (-1,1.5) .. controls +(.75,0) and +(-.75,0) .. (1,2);
\end{scope}
\begin{scope}[shift ={+(12,-.5)}]
\draw[->] (-1,3) -- (1,3);
\draw[->] (-1,2) .. controls +(.75,0) and +(-.75,0) .. (1,1.5);
\end{scope}
\begin{scope}[shift ={+(8,0)}]
\draw[ ] (-1,1.5) .. controls +(.75,0) and +(-.75,0) .. (1,2);
\draw[white, line width = 5] (-1,2) .. controls +(.75,0) and +(-.75,0) .. (1,1.5);
\draw[ ] (-1,2) .. controls +(.75,0) and +(-.75,0) .. (1,1.5);
\end{scope}
\node at (5,.5) {$\beta''$};
\end{scope}
\end{tikzpicture}
\caption{On the top, the braid $\beta = \sigma_2^3 \sigma_1\sigma_3\sigma_2\sigma_1\sigma_3\in B_4$. On the bottom, the sub-braids $\beta' = \sigma_1^3\in B_2$ (left) and $\beta''=\sigma_1\in B_2$ (right) associated to the red and black components, respectively.}\label{fig:subbraid}
\end{figure}
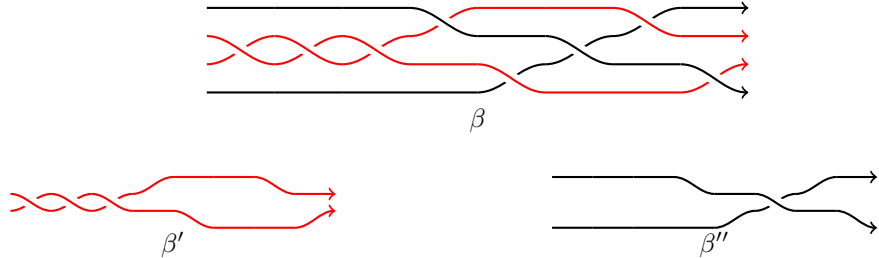

\subsection{The self-linking number}
To each $n$-stranded braid $\beta$ is associated an integer called \dfn{self-linking number}, which is defined as
\[ sl(\beta) = w(\beta) - n\]
where $w(\beta)$ denotes the \dfn{writhe}, or algebraic crossing count, of $\beta$.~%
Rudolph proved in \cite{RudolphObstructionToSlice} that if $\beta$ is a quasi-positive braid, and if $L$ denotes the closure of $\beta$, then
\begin{equation}
sl(\beta) = - \chi_4(L).
\label{eq:sl=chi}
\end{equation}
This fact will be repeatedly used throughout the paper, and it is central in our proofs.
Another key fact, which will be extensively used in our proofs, is presented in the following lemma.

\begin{lemma}\label{lem:KeyLemma}
Let $L$ be a link partitioned into disjoint sub-links $L_1$,...,$L_k$. Given a braid $\beta$ representing $L$, denote by $\beta_i$ the sub-braid associated to $L_i$. Then, we have
\begin{equation} sl(\beta) - \sum_{i=1}^{k} sl(\beta_i) = 2 \sum_{1\leq i <j \leq k} \lk (L_i,L_j)
\label{eq:keylemma}\end{equation}
\end{lemma}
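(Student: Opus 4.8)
The plan is to compute both sides by tracking the two ingredients of the self-linking number — the writhe and the number of strands — separately, and to observe that the only contributions not captured by the sub-braids are exactly the crossings \emph{between} distinct sub-links, which is precisely the quantity measured by linking numbers.

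First I would fix notation: let $n$ be the number of strands of $\beta$ and let $n_i$ be the number of strands of $\beta_i$. Since every strand of $\beta$ closes up to a single component of $L$, and the sub-links $L_1,\dots,L_k$ partition the components of $L$, each strand of $\beta$ belongs to exactly one $\beta_i$; hence $n = \sum_{i=1}^k n_i$. Next I would classify the crossings of $\beta$ according to which strands meet at them. Call a crossing \emph{internal} if both strands belong to the same $L_i$, and \emph{mixed} if the two strands belong to distinct sub-links $L_i$ and $L_j$. By definition the sub-braid $\beta_i$ is obtained by deleting all strands not belonging to $L_i$, so the crossings of $\beta_i$ are exactly the internal crossings of $\beta$ associated to $L_i$, with the same signs — deleting a strand is a local modification that neither removes nor changes the sign of a crossing between two surviving strands. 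Writing $c_{ij}$ for the signed count of mixed crossings between $L_i$ and $L_j$, the writhe therefore decomposes as
\[ w(\beta) = \sum_{i=1}^k w(\beta_i) + \sum_{1\le i<j\le k} c_{ij}. \]

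Combining the two decompositions gives
\[ sl(\beta) = w(\beta) - n = \sum_{i=1}^k \big( w(\beta_i) - n_i \big) + \sum_{i<j} c_{ij} = \sum_{i=1}^k sl(\beta_i) + \sum_{i<j} c_{ij}, \]
so that $sl(\beta) - \sum_{i} sl(\beta_i) = \sum_{i<j} c_{ij}$. It then remains to identify $c_{ij}$ with $2\,\lk(L_i,L_j)$. For this I would invoke the standard fact that the linking number between two disjoint (possibly multi-component) sub-links equals one half of the signed count of crossings between them in any diagram; in the closed-braid diagram for $L$ the closure arcs cross nothing, so the crossings between a strand of $L_i$ and a strand of $L_j$ are precisely the mixed crossings counted by $c_{ij}$. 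Hence $c_{ij} = 2\,\lk(L_i,L_j)$, and substituting yields Equation \eqref{eq:keylemma}.

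The argument is essentially bookkeeping, so the only point requiring genuine care — and the step I would treat as the main, if mild, obstacle — is the crossing classification: one must check that passing to a sub-braid neither destroys internal crossings nor alters their signs, and that a mixed crossing contributes to $c_{ij}$ with the same sign when viewed inside $\beta$ as it does in the diagram used to evaluate $\lk(L_i,L_j)$. Both facts follow from deletion of strands being a local operation performed away from the surviving crossings, together with the orientation conventions under which the braid closure is an oriented diagram for $L$.
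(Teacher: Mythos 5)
Your proof is correct and follows essentially the same route as the paper's: since the strand counts of the $\beta_i$ sum to that of $\beta$, the difference $sl(\beta)-\sum_i sl(\beta_i)$ reduces to the signed count of crossings between strands of distinct sub-links, which is twice the total linking number. The paper compresses this bookkeeping into two sentences, while you spell out the writhe decomposition and the sign-preservation under strand deletion explicitly, but the argument is the same.
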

\begin{proof}
Since the number of strands of the $\beta_i$ adds up to the number of strands in $\beta$, the left-hand side of Equation \eqref{eq:keylemma} is the signed count of all crossings in $\beta$ which do not belong to any $\beta_i$. Which is twice the total linking number between the $L_j$s, as claimed.
\end{proof}

\subsection{Slice-torus link invariants}\label{subs:slicetorus}
Recall from the introduction that a \dfn{slice-torus link invariant} is a special $\bR$-valued link concordance invariant which vanishes on unlinks.~%
We do not need the precise definition of slice-torus link invariant, but only some formal properties, thus we refer the reader to \cite{CavalloCollari} for it.~%

Let $D$ be an oriented link diagram.~%
The \dfn{oriented resolution} of $D$ is the set of oriented circles obtained by replacing each crossing in $D$ with its oriented smoothing -- see Figure \ref{fig:orientedres}.~%
The circles appearing in the oriented resolution are called \dfn{Seifert circles} and their number is denoted by $o(D)$.~%
Denote by $s_+(D)$ (resp. $s_-(D)$) the number of connected components of the graph obtained by smoothing all negative (resp.~positive) crossings, and flattening all positive (resp.~negative) crossings.~%
Denote by $w(D)$ the algebraic crossing count (or \dfn{writhe}) of $D$.~%
Finally, denote by $\ell_s(D)$ the number of connected components of the graph obtained by flattening all crossings in $D$, regardless of the sign -- in particular, it follows that $\ell_s(D)\leq s_\pm(D)$.
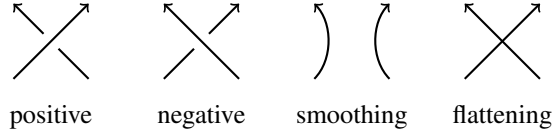
\begin{figure}[H]
\centering
\begin{tikzpicture}[thick, scale = .5]
\draw[->](-3,-1) -- (-5,1);
\draw[white, line width =8](-5,-1) -- (-3,1);
\draw[->](-5,-1) -- (-3,1);
\node at (-4,-2) {positive};

\draw[->](-1,-1) -- (1,1);
\draw[white, line width =8](1,-1) -- (-1,1);
\draw[->](1,-1) -- (-1,1);
\node at (0,-2) {negative};

\draw[->](3,-1) .. controls +(.5,.5) and  +(.5,-.5)  .. (3,1);
\draw[->](5,-1) .. controls +(-.5,.5) and  +(-.5,-.5)  ..  (5,1);
\node at (4,-2) {smoothing};

\draw[->](7,-1) -- (9,1);
\draw[->](9,-1) -- (7,1);
\node at (8,-2) {flattening};
\end{tikzpicture}
\caption{A positive crossing, a negative crossing, the corresponding oriented smoothing, and the corresponding flattening.}\label{fig:orientedres}
\end{figure}
The above-defined (diagram-dependent) combinatorial quantities can be used to estimate the value of all slice-torus link invariants.~%
More precisely, we have the following combinatorial bounds.
\begin{prop}[{\cite[Theorem 1.3]{CavalloCollari}}]\label{pro:combinatorialbound}
Let $\nu$ be a slice-torus link invariant.~For each oriented diagram $D$ representing the $\ell$-component link $L$, we have
\[\frac{w(D) - o(D) +2s_+(D) +\ell - 2 \ell_s(D)}{2} \leq \nu(L) \leq \frac{w(D) + o(D) - 2s_-(D) -\ell + 2 \ell_s(D)}{2}\]
\end{prop}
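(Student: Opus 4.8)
The plan is to adapt the strategy of Kawamura and Lobb—who proved the analogous estimates for the Rasmussen invariant of knots—to an arbitrary slice-torus link invariant $\nu$. I would isolate two inputs at the outset, both among the formal properties of $\nu$ recorded in \cite{CavalloCollari}: first, the exact evaluation $\nu(L) = \tfrac12\big(w(D) - o(D) + \ell\big)$ whenever $D$ is a \emph{positive} diagram (this is the equality case of the slice-torus Bennequin inequality, and is where the bound is sharp); and second, the one-sided inequalities governing the change of $\nu$ under a single oriented saddle move, distinguishing a saddle that merges two components from one that splits a component.

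I would first reduce to proving only the lower bound. Replacing $D$ by its mirror $\bar D$ interchanges positive and negative crossings, so $w(\bar D) = -w(D)$, $o(\bar D) = o(D)$, $\ell_s(\bar D) = \ell_s(D)$ and $s_+(\bar D) = s_-(D)$, while $\nu(\bar L) = -\nu(L)$. Consequently the upper bound for $(L,D)$ is literally the lower bound for $(\bar L,\bar D)$, and it suffices to establish
\[ \nu(L) \ \geq\ \frac{w(D) - o(D) + 2 s_+(D) + \ell - 2\ell_s(D)}{2}. \]

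For the lower bound I would oriented-resolve every negative crossing of $D$. Since oriented resolution does not alter the Seifert state, this produces a \emph{positive} diagram $D'$ of a positive link $L'$ with $o(D') = o(D)$ and $w(D') = n_+(D)$, so the first input evaluates $\nu(L')$ exactly. These $n_-(D)$ resolutions assemble into a cobordism from $L$ to $L'$ made of $n_-(D)$ saddles, and I would feed this cobordism into the second input. The base case is already transparent: if $D$ is positive there are no saddles, flattening the positive crossings coincides with flattening all crossings so that $s_+(D) = \ell_s(D)$, and the claimed inequality collapses to the equality of the first input.

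The crux—and the step I expect to be the genuine obstacle—is the combinatorial accounting that sharpens the crude estimate ``each of the $n_-(D)$ saddles moves $\nu$ by at most $\tfrac12$'' into the stated bound with its $s_+$ and $\ell_s$ corrections. Here I would resolve the negative crossings in a carefully chosen order, track the running number of link components, and separate the saddles into merges and splits using the one-sided inequalities of the second input, which contribute asymmetrically. Their net effect should be dictated by how the Seifert circles of $D$ are grouped through the positive crossings—precisely the data counted by $s_+(D)$—while the correction $\ell_s(D)$ enters through the additivity of $\nu$ under split union when $D$ is disconnected. Proving that this count reassembles into exactly $2 s_+(D) + \ell - 2\ell_s(D)$, independently of the order in which the crossings are resolved, is the delicate part; the clean collapse to equality on positive diagrams serves as the consistency check that the bookkeeping is right.
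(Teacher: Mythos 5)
A preliminary remark: the paper does not prove this proposition at all — it is quoted from \cite[Theorem~1.3]{CavalloCollari} — so your proposal can only be compared with the argument given there, which is indeed of the Lobb--Kawamura type you describe. Your overall strategy for the lower bound is the right one, but the proposal has one fatal error and one genuine gap.

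The fatal error is the mirror reduction. The identity $\nu(\bar L)=-\nu(L)$ holds for slice-torus invariants of \emph{knots} (essentially because $K\#\bar K$ is slice and $\nu$ is additive under connected sum), but it is \emph{false} for slice-torus link invariants on links with more than one component: the knot argument has no analogue, since $L\#\bar L$ and $L\sqcup\bar L$ are never strongly slice when $L$ has a non-zero linking number, and nothing else in the axioms is mirror-symmetric (the normalisation is on \emph{positive} torus links, and the genus bound $2\nu(L)-\ell\le-\chi_4(L)$ is one-sided). Concretely, for the slice-torus normalisation $\nu=(s+\ell-1)/2$ of the Beliakova--Wehrli invariant \cite{BeliakovaWehrli}, Equation \eqref{eq:OldConditionsQP} forces $\nu(H_+)=1$ for the positive Hopf link $H_+$, while the Khovanov homology of the negative Hopf link $H_-$ is supported in bidegrees $(0,0),(0,-2),(-2,-4),(-2,-6)$, giving $s(H_-)=-1$ and hence $\nu(H_-)=0\neq-\nu(H_+)$. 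One can even see the failure without computing any invariant: for the standard diagram of $H_-$ one has $w=-2$, $o=2$, $s_+=2$, $s_-=1$, $\ell=2$, $\ell_s=1$, so the stated lower bound equals $0$, while the expression obtained by mirroring it --- which is exactly what your reduction produces, and exactly the upper bound printed in the proposition --- equals $-1$; the two are incompatible. (This incidentally shows that the upper bound as transcribed here cannot be the correct statement of \cite[Theorem~1.3]{CavalloCollari}: it should read $\nu(L)\le\tfrac12\left(w(D)+o(D)-2s_-(D)+\ell\right)$, which is what makes Remark~\ref{rem:homogeneous} consistent on alternating diagrams. The upper bound therefore requires its own argument, resolving the \emph{positive} crossings towards a negative diagram, on which $\nu$ is \emph{not} minus the positive-diagram formula of the mirror.)

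The second problem is that the step you call the crux is a missing idea, not deferred bookkeeping, and the consistency check you propose cannot detect whether it has been carried out. With the correct one-sided saddle inequalities (a resolution merging two components costs nothing, one splitting a component costs at most $1$), the number of splitting resolutions among the $n_-$ is $\tfrac12(n_-+\ell'-\ell)$, where $\ell'$ is the number of components of the positive link $L'$; combined with $\nu(L')=\tfrac12(n_+-o+\ell')$, the quantity $\ell'$ cancels, and with it every trace of $s_+$, leaving only the crude bound $\nu(L)\ge\tfrac12(w-o+\ell)$. No ordering of the resolutions changes this, because the number of splitting saddles is determined by $\ell$ and $\ell'$ alone. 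The refinement comes from a different mechanism: exactly $s_+(D)-\ell_s(D)$ of the resolutions disconnect the underlying flattened diagram, and for those one needs the \emph{strengthened} inequality $\nu(L_i)\ge\nu(L_a)+\nu(L_b)$ when the saddle turns $L_i$ into a genuinely split link $L_a\sqcup L_b$ --- that is, disconnecting resolutions must be shown to be free rather than of cost $1$. Isolating and proving this strengthened saddle property (split additivity alone does not give it) is the actual content of the bound, and it is absent from your sketch. Finally, note that the crude bound $\tfrac12(w-o+\ell)$ is also an equality on positive diagrams, so sharpness there cannot certify that your bookkeeping produces the term $2s_+(D)-2\ell_s(D)$ rather than $0$.
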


An immediate consequence is the following refinement of the slice-Bennequin inequality.~%

\begin{cor}\label{cor:slice-torusBennequin}
For each braid $\beta$ representing an $\ell$-component link $L$,  and any slice-torus link invariant $\nu$, we have
\[ sl(\beta) \leq  2\nu (L) -\ell.\]
\end{cor}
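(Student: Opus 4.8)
The plan is to apply the lower bound of Proposition~\ref{pro:combinatorialbound} to the closure diagram of $\beta$ and then absorb the resulting error terms using the inequality $\ell_s \leq s_+$ recorded in Subsection~\ref{subs:slicetorus}. The whole argument reduces to a substitution, so I do not expect any genuine obstacle.

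First I would take $D$ to be the diagram obtained by closing the braid $\beta$. Two features of this diagram are immediate. Its writhe equals the writhe of the braid, $w(D)=w(\beta)$. Moreover, its oriented resolution consists of exactly $n$ Seifert circles, one for each strand: in a braid all strands are coherently oriented, so the oriented smoothing of every crossing keeps the two participating strands parallel, and closing up the $n$ resulting vertical segments produces $n$ disjoint circles. Hence $o(D)=n$, and consequently $sl(\beta)=w(\beta)-n=w(D)-o(D)$.

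Next I would feed these identities into the left-hand (lower) bound of Proposition~\ref{pro:combinatorialbound}, obtaining
\[ 2\nu(L) \geq w(D)-o(D)+2s_+(D)+\ell-2\ell_s(D) = sl(\beta)+\ell+2\big(s_+(D)-\ell_s(D)\big), \]
which rearranges to $2\nu(L)-\ell \geq sl(\beta)+2\big(s_+(D)-\ell_s(D)\big)$.

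Finally, the inequality $\ell_s(D)\leq s_+(D)$ from Subsection~\ref{subs:slicetorus} guarantees that the error term $s_+(D)-\ell_s(D)$ is non-negative, so dropping it yields the desired bound $sl(\beta)\leq 2\nu(L)-\ell$. The only ingredients are the two combinatorial identities for a braid closure ($w(D)=w(\beta)$ and $o(D)=n$) together with $\ell_s\leq s_+$; everything else is the direct application of the combinatorial bound.
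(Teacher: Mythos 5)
Your proof is correct and is essentially identical to the paper's own argument: both take the braid closure as the diagram, use $o(D)=n$ so that $sl(\beta)=w(D)-o(D)$, and then apply the lower bound of Proposition~\ref{pro:combinatorialbound} together with $\ell_s(D)\leq s_+(D)$. You simply spell out the substitution that the paper leaves implicit.
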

\begin{proof}
If we use the closure of $\beta\in B_n$ as a diagram, then the number of strands $n$ equals the number of Seifert circles.~%
The inequality follows from Proposition \ref{pro:combinatorialbound}, and from $\ell_s \leq s_+$.
\end{proof}

\begin{rem}\label{rem:homogeneous}
The upper and lower bounds in Proposition \ref{pro:combinatorialbound} coincide if, and only if, $D$ is an \dfn{homogeneous diagram}  \cite{Cromwell}.~%
This follows from \cite[Theorem 3.4]{Abe} -- which is stated for knots, but holds for non-split links -- together with \cite[Corollary 7.6.4]{CromwellBook}, and \cite[Definition 2.1(B)]{CavalloCollari}.~%
Examples of homogeneous diagrams are positive diagrams and alternating diagrams  \cite[Section 7.6]{CromwellBook}.
\end{rem}
\subsection{Unlinking and splitting numbers}
In this subsection we collected two basic facts concerning unlinking and splitting numbers.~%
Let us by recalling some basic terminology. If a crossing change features a single component it is called \dfn{self-crossing change},  otherwise it is called \dfn{mixed crossing change}.
A \dfn{splitting sequence} (resp. \dfn{unlinking sequence}) for a link $L$ is a finite sequence of links $L= L^{(0)}, L' ,..., L^{(k)}$ such that: $L^{(i)}$ is obtained from $L^{(i+1)}$ via a single crossing change, and $L^{(k)}$ is the split union of knots (resp. an unlink). A splitting sequence featuring only mixed crossing changes is called \dfn{strong splitting sequence}. The unlinking, splitting, and strong splitting numbers are the minimal lengths of the eponymous sequences.

\begin{lemma}[{\cite[Lemma $2.1$]{CavalloCollariConway}}]\label{lem:lkleqm}
Let $L$ be a link.~%
If there exists a splitting sequence with $m$ mixed crossing changes, then
\[ \vert \lk\vert (L) \leq m,\]
where $\vert \lk \vert$ denotes the \dfn{absolute linking number}, that is the sum of the absolute value of the linking numbers between the components of $L$ -- not to be confused with $\vert \lk (L) \vert$.
\end{lemma}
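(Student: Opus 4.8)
The plan is to track the behaviour of each pairwise linking number along the splitting sequence. Since a crossing change neither creates nor destroys connected components, nor moves an arc from one component to another, I can label the components $K_1,\dots,K_\ell$ consistently across the entire sequence $L=L^{(0)}, L^{(1)},\dots, L^{(k)}$, and speak unambiguously of $\lk(K_i,K_j)$ at each stage. The idea is then to bound the initial linking numbers by counting how many crossing changes can possibly affect each of them.

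First I would record the effect of a single crossing change on the pairwise linking numbers. Recall that $\lk(K_i,K_j)$ equals half the signed count of the crossings between $K_i$ and $K_j$ in any diagram. A self-crossing change involves a single component, hence no crossing between two distinct components, and therefore leaves every $\lk(K_i,K_j)$ unchanged. A mixed crossing change between components $K_a$ and $K_b$ flips the sign of exactly one crossing between them, changing the signed count by $\pm 2$ and so $\lk(K_a,K_b)$ by exactly $\pm 1$; it leaves $\lk(K_i,K_j)$ unchanged for every other pair $\{i,j\}\neq\{a,b\}$.

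Next I would compare the two ends of the sequence. Write $m_{ij}$ for the number of mixed crossing changes occurring between $K_i$ and $K_j$, so that $m=\sum_{i<j} m_{ij}$. In $L=L^{(0)}$ the linking number of the pair $(i,j)$ is $\lk(K_i,K_j)$, while in the completely split link $L^{(k)}$ it is $0$. By the previous step, along the sequence this number can change only at the $m_{ij}$ mixed crossing changes between $K_i$ and $K_j$, and each such change alters it by $\pm 1$. Hence $\vert \lk(K_i,K_j)\vert = \vert \lk(K_i,K_j)-0\vert \leq m_{ij}$. Summing over all pairs gives $\vert\lk\vert(L) = \sum_{i<j} \vert\lk(K_i,K_j)\vert \leq \sum_{i<j} m_{ij} = m$, as desired.

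The only genuinely delicate point is the bookkeeping in the first step: verifying that each mixed crossing change affects precisely one pairwise linking number and alters it by exactly one, while all other crossing changes are inert for that pair. Once this local analysis is in place, the global bound is just a telescoping estimate comparing the start and end of the sequence, and no further difficulty arises.
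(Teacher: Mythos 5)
Your proof is correct. Note that the paper does not actually prove this lemma---it cites it from \cite[Lemma 2.1]{CavalloCollariConway}---and your argument (self-crossing changes leave pairwise linking numbers fixed, a mixed crossing change between $K_a$ and $K_b$ shifts $\lk(K_a,K_b)$ by exactly $\pm 1$ and nothing else, completely split links have vanishing pairwise linking numbers, then telescope and sum over pairs) is precisely the standard argument given in that reference, so nothing further is needed.
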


If the absolute linking number equals the strong splitting number, one can say something about the unlinking number, and splitting number $\wsp$.~%

\begin{lemma}\label{lem:sharplinkingbound}
Let $L = K_1 \cup \cdots \cup K_\ell$ be a link.~%
If  $  \ssp (L) =\vert \lk \vert (L)$, then 
\begin{enumerate}[(i)]
\item $\wsp (L) = \vert \lk \vert (L)$;
\item $u(L)- \sum_{i=1}^{\ell} u(K_i) = \vert \lk \vert (L) $.
\end{enumerate}
\end{lemma}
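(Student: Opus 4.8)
The plan is to prove both statements by sandwiching the relevant quantities between matching bounds, with one direction always coming from Lemma \ref{lem:lkleqm} and the other from the hypothesis $\ssp(L) = \vert \lk \vert(L)$. The single observation that organizes everything is this: a \emph{mixed} crossing change does not alter the knot type of any individual component, whereas restricting a crossing-change sequence to one component—by deleting the strands of all other components from every diagram in the sequence—converts the self-crossing changes on that component into genuine crossing changes of the corresponding knot diagram and makes the mixed ones vanish. Note also that crossing changes never merge or split components, so the components can be tracked consistently along any sequence.

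For (i), I would first note that every strong splitting sequence is in particular a splitting sequence, so $\wsp(L) \leq \ssp(L) = \vert \lk \vert(L)$. For the reverse inequality, take a minimal splitting sequence realizing $\wsp(L)$ crossing changes and let $m$ be the number of mixed ones among them; since its endpoint is completely split, Lemma \ref{lem:lkleqm} gives $\vert \lk \vert(L) \leq m \leq \wsp(L)$. Combining the two bounds yields $\wsp(L) = \vert \lk \vert(L)$.

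For the upper bound in (ii), I would apply a minimal strong splitting sequence, which uses $\ssp(L) = \vert \lk \vert(L)$ mixed crossing changes and, because mixed changes preserve the knot type of each component, produces exactly the completely split link $K_1 \sqcup \cdots \sqcup K_\ell$. Unknotting each $K_i$ with $u(K_i)$ further crossing changes then produces an unlink, so $u(L) \leq \vert \lk \vert(L) + \sum_{i=1}^{\ell} u(K_i)$, that is $u(L) - \sum_{i=1}^{\ell} u(K_i) \leq \vert \lk \vert(L)$.

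The heart of the matter is the lower bound in (ii), which in fact holds with no hypothesis. Take a minimal unlinking sequence for $L$; it has $u(L)$ crossing changes and, its endpoint being an unlink and hence completely split, it is also a splitting sequence. Let $m$ be its number of mixed crossing changes and $s_i$ the number of self-crossing changes on $K_i$, so $u(L) = m + \sum_{i=1}^{\ell} s_i$; Lemma \ref{lem:lkleqm} gives $m \geq \vert \lk \vert(L)$. Now, for each $i$, delete from every diagram in the sequence all components other than $K_i$: the mixed changes become vacuous, the $s_i$ self-changes on $K_i$ become crossing changes of a diagram of $K_i$, and the terminal diagram represents the unknot, so these $s_i$ changes form an unknotting sequence for $K_i$ and $s_i \geq u(K_i)$. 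Summing gives $u(L) \geq \vert \lk \vert(L) + \sum_{i=1}^{\ell} u(K_i)$, and together with the upper bound this forces equality. I expect the only delicate point to be making this restriction argument precise—verifying that a mixed crossing change leaves the isotopy class of each single component untouched and genuinely disappears upon deleting the other components—so that $s_i$ really dominates $u(K_i)$.
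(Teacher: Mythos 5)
Your proof is correct, and for part (ii) it is essentially the paper's argument: the lower bound is obtained by decomposing a minimal unlinking sequence into $m$ mixed and $s$ self-crossing changes, using Lemma~\ref{lem:lkleqm} to get $\vert \lk \vert(L) \leq m$ and the fact that mixed changes preserve the knot type of each component to get $\sum_i u(K_i) \leq s$; the upper bound is obtained by concatenating a minimal strong splitting sequence (of length $\ssp(L) = \vert \lk \vert(L)$, which is where the hypothesis enters) with minimal unknotting sequences for the components. The one genuine difference is part (i): the paper disposes of it by citing \cite[Remark 2.3]{CavalloCollariConway}, whereas you prove it directly --- $\wsp(L) \leq \ssp(L)$ because every strong splitting sequence is a splitting sequence, and Lemma~\ref{lem:lkleqm} applied to a minimal splitting sequence gives $\vert \lk \vert(L) \leq \wsp(L)$ --- which is correct and has the merit of making the lemma self-contained. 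Finally, your component-restriction argument (deleting all components other than $K_i$ from every diagram in the sequence, under which mixed changes become vacuous and self-changes on $K_i$ survive as crossing changes ending at the unknot) is a sound and appropriately careful way of making rigorous the paper's one-line assertion that the self-crossing changes must dominate $\sum_i u(K_i)$.
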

\begin{proof}
The equality in (i) has been proved in \cite[Remark 2.3]{CavalloCollariConway}.~%
Consider a minimal unlinking sequence  for $L$ with $m$ mixed crossing changes and $s$ self crossing changes.~%
Since an unlinking sequence is also a splitting sequence, Lemma \ref{lem:lkleqm} implies $\vert \lk \vert (L)  \leq m$.~%
Note that mixed crossing change do not change the isotopy class of the components.~%
Thus, the sum of the unknotting numbers of the $K_i$ is less than, or equal to, $s$.~%
It follows that
\[ \vert \lk \vert (L) + \sum_{i} u(K_i) \leq m + s= u(L).\]
The opposite inequality, and hence the equality, follows from a simple observation; a particular unlinking sequence is given by a minimal strong splitting sequence followed by a minimal unknotting sequence for each component.~%
\end{proof}

\section{Concordance to quasi-positive links}\label{sec:qp}

In this section we prove our obstructions to the existence of a concordance between a given link and quasi-positive links (Theorem \ref{teo:MainFullQP}), and an obstruction to quasi-positivity (Theorem \ref{teo:obstruction_qp}).~%
We conclude the section with some remarks on the concordance self-linking number $sl_{c}$ -- i.e.~the maximal self-linking number in a concordance class.~%
More concretely, we compute $sl_c$ for the closure of pure braids, and obtain a necessary condition for a pure braid to be concordant to a quasi-positive link.~%
Finally, we identitify all alternating pure braids whose closure is concordant to a quasi-positive link.

\subsection{Obstruction to quasi-positivity and concordance to quasi-positive links}
This short subsection is dedicated to the proofs of Theorems \ref{teo:MainFullQP} and \ref{teo:obstruction_qp}.~%

\begin{proof}[of Theorem \ref{teo:MainFullQP}]
Let $L=L_1 \cup \cdots \cup L_k$ be a quasi-positive link, and let $\beta$ be a quasi-positive braid representing $L$.~%
Denote by $\beta_i \in B_{n_i}$ the sub-braid of $\beta$ associated to $L_i$, and by $\ell_i$ the number of components of $L_i$. %
From \cite[Theorem 3.2]{CavalloCollari}, it follows that
\[ 2\nu(L) - \ell = sl(\beta). \]
Therefore, we have
\[ 2 \left( \nu(L) - \sum_{ i =1}^{k} \nu(L_i)\right) = sl(\beta) - \sum_{i=1}^{k} (2\nu(L_i) - \ell_i) \leq sl(\beta) - \sum_{i=1}^{k} sl(\beta_i),\]
where the last inequality follows from Corollary \ref{cor:slice-torusBennequin}.~%
The statement now follows from Lemma \ref{lem:KeyLemma}, and from the concordance invariance of both $\nu$ and the linking matrix.
\end{proof}

We already gave an application of Theorem \ref{teo:MainFullQP}, thus we go straight to the proof of Theorem \ref{teo:obstruction_qp}.

\begin{proof}[of Theorem \ref{teo:obstruction_qp}]
Let $L=L_1 \cup \cdots \cup L_k$ be a quasi-positive link partitioned into sub-links.~%
Consider a quasi-positive braid $\beta$ representing $L$, and denote by $\beta_i$ the sub-braid representing~$L_i$.~%
Fix a slice-torus link invariant $\nu$, and assume that 
\[ \nu(L) - \sum_{i=1}^{k} \nu(L_i) = \sum_{i<j} \lk (L_i,L_j).\]
Since $\beta$ is quasi-positive, it follows from \eqref{eq:OldConditionsQP}, \eqref{eq:sl=chi}, and Lemma \ref{lem:KeyLemma} that
\[ \sum_{i=1}^{k} (2 \nu(L_i) - \ell_i) = \sum_{i=1}^{k} sl(\beta_i). \]
Since, by Corollary \ref{cor:slice-torusBennequin}, we have $sl(\beta_i) \leq (2 \nu(L_i) - \ell_i)$ the equality follows for each $i$.
\end{proof}

We conclude the section with an example.
In the following example we make use of the connected sum.  The connected sum of $L$ and $L'$ is denoted by $L \#_{K,K'} L'$, where $K$ and $K'$ are the component used to perform the sum \cite[Section 4.6]{Cromwell}. Whenever $K$ or $K'$, possibly both, is immaterial or clear from the context, it shall be omitted.

\begin{exa}\label{exa:obs-qp}
Let $L = L_1 \cup ... \cup L_k$ be a quasi-positive link satisfying Equation \eqref{eq:hp_obs_qp} with $L_1$ an unknot, eg.~the positive $(2,2n)$ torus link $T(2,2n)$. Let $K$ be a slice knot with $sl_{max}(K) < -1$, eg.~$6_1$. It can be seen that the link $L' = L\#_{L_1} K = K \cup L_2 \cup ... \cup L_k$ is concordant to $L$, and thus satisfies both Equation \eqref{eq:hp_obs_qp} and $2\nu(L') -\ell = -\chi_4(L')$. However, by hypothesis
\[ sl_{max}(K)  <  -1 = 2 \nu(K) -1. \]
Hence, by Theorem \ref{teo:obstruction_qp}, $L'$ cannot be quasi-positive.
\end{exa}

\subsection{Concordance self-linking number}

Recall from the introduction, that the \dfn{maximal self-linking number}
\[ sl_{c}(L) = \max \left\lbrace sl(\beta) \vert\ L\text{ is the closure of }\beta \right\rbrace,\]
and the \dfn{concordance self-linking number} 
\[ sl_{c}(L) = \max \left\lbrace sl(\beta) \vert\ \text{the closure of  }\beta\text{ is concordant to }L \right\rbrace,\]
are well-defined integer-valued link invariant, and concordance invariant, respectively.~%
Furthermore, it follows from \eqref{eq:OldConditionsQP} and \eqref{eq:sl=chi} that if an $\ell$-component link $L$ is concordant to a quasi-positive link, then
\begin{equation}
sl_{c}(L) = 2\nu(L) - \ell = -\chi_4(L),
\label{eq:slc=nu}
\end{equation}
for each slice-torus link invariant $\nu$.~%
Hence, the concordance self-linking number can be used to obstruct the concordance to quasi-positive links.~%
However, the concordance self-linking number is extremely difficult to compute, nonetheless we are able to compute it for pure braid closures.

\begin{prop}\label{prop:slmetc vs lk}
Let $L$ be a link partitioned into sub-links $L_1$,...,$L_k$. Then, we have
\[ sl_{\ast}(L) \leq  \sum_{i=1}^{k} sl_{\ast}(L_i) + 2 \sum_{1\leq i <j \leq k} \lk (L_i,L_j),\quad \text{where }\ast\in\{ {max}, {c} \}.\]
\end{prop}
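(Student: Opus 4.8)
The plan is to realise each maximum by an actual braid and then feed it into Lemma~\ref{lem:KeyLemma}. Both $sl_{max}(L)$ and $sl_c(L)$ are suprema of the integer-valued quantity $sl(\beta)$ which, by the slice-Bennequin inequality, is bounded above by $-\chi_4(L)$; hence in either case the supremum is attained by some braid. Once an optimal braid is fixed, the proposition reduces to comparing the self-linking numbers of its sub-braids with $sl_\ast(L_i)$ and invoking the Key Lemma.

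For $\ast = max$, I would pick a braid $\beta$ with closure $L$ and $sl(\beta) = sl_{max}(L)$. Writing $\beta_i$ for the sub-braid associated to $L_i$, the closure of $\beta_i$ is $L_i$, so $sl(\beta_i) \leq sl_{max}(L_i)$ directly from the definition of $sl_{max}$. Lemma~\ref{lem:KeyLemma} then gives
\[ sl_{max}(L) = sl(\beta) = \sum_{i=1}^{k} sl(\beta_i) + 2\sum_{i<j}\lk(L_i,L_j) \leq \sum_{i=1}^{k} sl_{max}(L_i) + 2\sum_{i<j}\lk(L_i,L_j), \]
which is the claimed inequality.

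For $\ast = c$, the only additional ingredient is to transport the partition along a concordance. I would choose a braid $\beta$ whose closure $L'$ is concordant to $L$ and satisfies $sl(\beta) = sl_c(L)$. A concordance is a disjoint union of annuli matching the components of $L'$ with those of $L$; this matching partitions $L'$ into sub-links $L'_1,\dots,L'_k$, where $L'_i$ collects the components of $L'$ joined by annuli to the components of $L_i$. By construction each $L'_i$ is concordant to $L_i$, so $sl(\beta_i)\leq sl_c(L_i)$ for the sub-braid $\beta_i$ of $\beta$ associated to $L'_i$. Moreover pairwise linking numbers are concordance invariants, so $\lk(L'_i,L'_j)=\lk(L_i,L_j)$ for the total linking number between the blocks. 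Applying Lemma~\ref{lem:KeyLemma} to $\beta$ together with the partition $L'_1,\dots,L'_k$ and substituting these two facts yields
\[ sl_c(L) = sl(\beta) = \sum_{i=1}^{k} sl(\beta_i) + 2\sum_{i<j}\lk(L'_i,L'_j) \leq \sum_{i=1}^{k} sl_c(L_i) + 2\sum_{i<j}\lk(L_i,L_j). \]

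The only real subtlety, and the step I would treat most carefully, is the $sl_c$ case: one must check that the partition of $L'$ induced by the concordance is well defined (each component of $L'$ is matched to exactly one component of $L$, hence lies in exactly one block $L'_i$), and that the resulting sub-links $L'_i$ are genuinely concordant to $L_i$ as oriented links, so that the defining inequality $sl(\beta_i)\leq sl_c(L_i)$ legitimately applies. Everything else is a direct substitution into Lemma~\ref{lem:KeyLemma}, with the two cases running in complete parallel.
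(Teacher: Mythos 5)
Your proof is correct and follows essentially the same route as the paper: realize $sl_\ast$ by an optimal braid, apply Lemma~\ref{lem:KeyLemma}, transport the partition along the concordance (using that the annuli match components bijectively), and use concordance invariance of the linking numbers. The only cosmetic difference is that the paper first records the intermediate inequality $sl_{max}(L) \leq \sum_{i} sl_{c}(L_i) + 2\sum_{i<j}\lk(L_i,L_j)$ and then applies it to the concordant link $L'$ realizing $sl_c(L)$, whereas you bound the sub-braids directly by $sl(\beta_i)\leq sl_c(L_i)$ from the definition of $sl_c$; the mathematical content is identical.
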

\begin{proof}
First we prove the case $\ast = max$. Let $\beta$ be a braid representing $L$ and realising the maximal self-linking number. Denote by $\beta_i$ the sub-braid of $\beta$ associated to $L_i$. By Lemma \ref{lem:KeyLemma}, we have
\[ sl_{max}(L) = sl(\beta) =\sum_{i=1}^{k} sl(\beta_i) + 2 \sum_{1\leq i <j \leq k} \lk (L_i,L_j) \leq \sum_{i=1}^{k} sl_{max}(L_i) + 2 \sum_{1\leq i <j \leq k} \lk (L_i,L_j).\]
Now, we can prove the case $*=c$. %
Since $sl_{max}(L_i) \leq sl_{c}(L_i)$, we have
\begin{equation}
\label{eq:slmaxandslc}
sl_{max}(L) \leq \sum_{i=1}^{k} sl_{c}(L_i) + 2 \sum_{1\leq i <j \leq k} \lk (L_i,L_j).
\end{equation}
Notice that any strong concordance establishes a bijection between the components of two links, and thus a bijection between sub-links.~%
Therefore, for any link $L'$ concordant to $L$, we can find a partition into sub-links $L'_1,...,L'_k\subset  L'$ such that $L_i$ is concordant to $L_i'$, and $\lk(L'_i,L'_j) =\lk(L_i,L_j) $ -- since the  linking matrix is a concordance invariant.~%
Hence, applying Equation \eqref{eq:slmaxandslc} we get
\[ sl_{max}(L') \leq \sum_{i=1}^{k} sl_{c}(L'_i) + 2 \sum_{1\leq i <j \leq k} \lk (L'_i,L'_j) = \sum_{i=1}^{k} sl_{c}(L_i) + 2 \sum_{1\leq i <j \leq k} \lk (L_i,L_j).\]
The statement follows by taking $L'$ concordant to $L$ and such that~$sl_{max}(L') = sl_c (L)$.
\end{proof}

\begin{exa}\label{ex:slcandqp}
Consider the link $L(k)$ in Figure \ref{fig:L_hk}.~%
By Proposition \ref{prop:slmetc vs lk} we have $sl_{c}(L(k))\leq -2 $. Moreover, by Theorem \ref{teo:ExampleConctoQP} we have $-\chi_{4}(L(k)) = 2k -2$. Since it does not satisfy Equation \eqref{eq:slc=nu}, the link $L(k)$ cannot be concordant to a quasi-positive link for each $k\geq 1$.
\end{exa}

\begin{cor}\label{cor:slc and unknotted comp}
If $L$ has unknotted components, then $sl_{c}(L) \leq  2\lk (L) - \ell$.
\end{cor}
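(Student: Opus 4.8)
The plan is to apply Proposition \ref{prop:slmetc vs lk} to the finest possible partition of $L$, namely the one into its individual components. Write $L = K_1 \cup \cdots \cup K_\ell$ and take $k = \ell$ with each $L_i = K_i$. Specialising the case $\ast = c$ of Proposition \ref{prop:slmetc vs lk} gives
\[ sl_c(L) \leq \sum_{i=1}^\ell sl_c(K_i) + 2\sum_{1\leq i<j\leq \ell}\lk(K_i,K_j) = \sum_{i=1}^\ell sl_c(K_i) + 2\lk(L), \]
since the double sum of pairwise linking numbers is by definition the total linking number $\lk(L)$. It therefore suffices to show that each unknotted component contributes $sl_c(K_i) = -1$.

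To evaluate $sl_c$ on an unknotted component, I would first use that $sl_c$ is a concordance invariant, and hence in particular an isotopy invariant. Since each $K_i$ is unknotted, $sl_c(K_i)$ equals $sl_c(U)$, where $U$ denotes the unknot. Now the slice-torus Bennequin inequality recorded in the introduction (and established via Corollary \ref{cor:slice-torusBennequin}) specialises, for the single-component link $U$, to $sl_c(U) \leq 2\nu(U) - 1$. Because every slice-torus link invariant vanishes on unlinks, we have $\nu(U) = 0$, so $sl_c(U) \leq -1$. The reverse inequality is immediate: the trivial one-strand braid closes up to $U$ and has self-linking number $0 - 1 = -1$, whence $-1 = sl_{max}(U) \leq sl_c(U)$. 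Thus $sl_c(K_i) = sl_c(U) = -1$ for every $i$, and substituting $\sum_{i=1}^\ell sl_c(K_i) = -\ell$ into the displayed bound yields $sl_c(L) \leq 2\lk(L) - \ell$, as desired.

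There is no serious obstacle here: the result is essentially a direct specialisation of Proposition \ref{prop:slmetc vs lk}. The only point requiring a little care is the identification $sl_c(K_i) = -1$, which relies on invoking \emph{concordance} invariance of $sl_c$ (not merely isotopy invariance) together with the vanishing of slice-torus invariants on the unknot. One could alternatively bypass the value $\nu(U)$ entirely: every knot concordant to $U$ is slice, hence has $\chi_4 = 1$, so Rudolph's slice-Bennequin inequality $sl(\beta) \leq -\chi_4 = -1$ bounds every braid whose closure is concordant to $U$, again forcing $sl_c(U) \leq -1$.
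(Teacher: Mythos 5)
Your proof is correct and follows exactly the route the paper intends: the paper states this result as an immediate corollary of Proposition \ref{prop:slmetc vs lk} (applied to the partition of $L$ into its $\ell$ unknotted components) without writing out a proof, and your only additional ingredient, the evaluation $sl_c(U)=-1$ via concordance invariance, the slice-torus Bennequin inequality $sl_c \leq 2\nu - \ell$, and the trivial one-strand braid, is precisely the intended justification. Your alternative ending via Rudolph's slice-Bennequin inequality and sliceness of knots concordant to the unknot is an equally valid way to pin down $sl_c(U)\leq -1$, but it is not a genuinely different argument.
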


\begin{prop}\label{prop:slc and pure}
If $L = K_1 \cup ... \cup K_\ell$ is concordant to the closure of a pure braid $\beta$, then
\[ sl_c(L) = sl(\beta) = 2 \lk(L) - \ell. \]
In particular, if $L$ is also concordant to a quasi-positive link, then $ \nu(L) = \lk(L)$.
\end{prop}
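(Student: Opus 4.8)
The plan is to exploit the two defining features of a pure braid closure: it has exactly $\ell$ unknotted components, and the sub-braid associated to any single component is a lone strand. Write $L'$ for the closure of $\beta$, so that $L' = U_1 \cup \cdots \cup U_\ell$ with each $U_i$ an unknot (a single strand of a braid closes up to an unknot) and with the number of strands of $\beta$ equal to $\ell$, which in turn matches the number of components of $L$ since a strong concordance preserves this number. The two invariants I would transport across the concordance $L \sim L'$ are the linking matrix, giving $\lk(L) = \lk(L')$, and the concordance self-linking number, giving $sl_c(L) = sl_c(L')$.

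The value of $sl(\beta)$ is then immediate from Lemma \ref{lem:KeyLemma} applied to the partition of $L'$ into its individual components. Each sub-braid $\beta_i$ is a single strand carrying no crossings, so $sl(\beta_i) = 0 - 1 = -1$, and summing yields
\[ sl(\beta) = \sum_{i=1}^{\ell} sl(\beta_i) + 2\sum_{i<j}\lk(U_i,U_j) = -\ell + 2\lk(L') = 2\lk(L) - \ell. \]

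To pin down $sl_c(L)$ I would produce matching inequalities. The lower bound is free: since $\beta$ is one braid representing $L'$, we have $sl_c(L) = sl_c(L') \geq sl_{max}(L') \geq sl(\beta) = 2\lk(L) - \ell$. For the upper bound I would invoke Corollary \ref{cor:slc and unknotted comp}, applied to $L'$ (which genuinely has unknotted components, unlike $L$), to get $sl_c(L') \leq 2\lk(L') - \ell$, and transport this across the concordance to obtain $sl_c(L) \leq 2\lk(L) - \ell$. Combining the two bounds gives $sl_c(L) = 2\lk(L) - \ell = sl(\beta)$. For the final clause, if $L$ is moreover concordant to a quasi-positive link then Equation \eqref{eq:slc=nu} reads $sl_c(L) = 2\nu(L) - \ell$; equating this with the value $2\lk(L) - \ell$ just established and cancelling $\ell$ yields $\nu(L) = \lk(L)$.

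I expect no serious obstacle in this argument, as it is essentially a bookkeeping exercise assembling results already in hand. The one point requiring care is that the upper bound must be applied to $L'$ rather than to $L$, whose components need not be unknots; the transition back to $L$ relies on the concordance invariance of both $sl_c$ and the linking matrix, so these invariances should be stated explicitly before the bounds are combined.
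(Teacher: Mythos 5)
Your proof is correct and takes essentially the same route as the paper's: establish $sl(\beta) = 2\lk(L) - \ell$ from the fact that a pure braid's crossings all lie between distinct components (you package this via Lemma \ref{lem:KeyLemma}; the paper computes $w(\beta) = 2\lk(L)$ directly), then get the upper bound on $sl_c$ from Corollary \ref{cor:slc and unknotted comp} and the final clause from Equation \eqref{eq:slc=nu}. Your explicit step of applying the corollary to the closure $L'$ and transporting the result to $L$ via concordance invariance of $sl_c$ and the linking matrix is exactly what the paper's terser wording leaves implicit, so it is a welcome clarification rather than a deviation.
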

\begin{proof}
A pure braid $\beta$ has as many strands as components in its closure. ~%
Furthermore, in $\beta$ there are only crossings between strands belonging to distinct components. In particular, we have that the components of the closure of $\beta$ are unknots, and that $2\lk(L) = w(\beta)$.~%
It follows immediately that $2\lk (L) - \ell = sl(\beta) \leq sl_c(L)$.~%
The equality follows from Corollary~\ref{cor:slc and unknotted comp}.~%
Finally, if $L$ is concordant to a quasi-positive link, then $2 \nu(L) - \ell = sl_c(L)$ by \eqref{eq:slc=nu}, and the result follows.
\end{proof}

As an application of Proposition \ref{prop:slc and pure} we characterise all the closures of alternating pure braids which are concordant to a quasi-positive link.
\begin{prop}\label{prop:altpurebraids}
Let $L$ be the closure of an alternating pure braid. Then, $L$ is concordant to a quasi-positive link if, and only if, $L$ is the split union  of (possibly trivial) positive $(2,n)$-torus links.
\end{prop}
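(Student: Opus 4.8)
The plan is to reduce the statement to a single numerical condition and then decide it with the homogeneous combinatorial bound. The backward implication is easy: a positive $(2,n)$-torus link is positive, hence quasi-positive; a split union of quasi-positive links is again quasi-positive (juxtapose the defining quasi-positive braids on disjoint blocks of strands, obtaining a product of conjugates of Artin generators whose closure is the split union); and a quasi-positive link is concordant to itself. So any split union of (trivial or non-trivial) positive $(2,n)$-torus links is concordant to a quasi-positive link.

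For the forward implication, let $\beta$ be an alternating pure braid on $n$ strands with closure $L$. Since $\beta$ is pure, every component of $L$ is unknotted, $\ell=n$, and every crossing is mixed, so $2\lk(L)=w(\beta)$. By Proposition \ref{prop:slc and pure} (applied with the pure braid $\beta$ itself), concordance to a quasi-positive link forces $\nu(L)=\lk(L)$. First I would split off the trivial strands and, via Lemma \ref{lem:non-splitaltbraids}(i), write $\beta$ as a juxtaposition of non-split alternating pure braids acting on consecutive blocks, so that $L=L^{(1)}\sqcup\cdots\sqcup L^{(r)}$ together with unknotted components (purity and alternation clearly restrict to each block). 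Using additivity of $\nu$ and of $\lk$ under split union, together with the inequality $\nu(L^{(i)})\ge \lk(L^{(i)})$ — which holds because $sl_c(L^{(i)})=2\lk(L^{(i)})-\ell_i$ by Proposition \ref{prop:slc and pure} while $sl_c(L^{(i)})\le 2\nu(L^{(i)})-\ell_i$ by Corollary \ref{cor:slice-torusBennequin} — the global equality $\nu(L)=\lk(L)$ descends to each summand: $\nu(L^{(i)})=\lk(L^{(i)})$.

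The heart of the argument is then to show that a non-split alternating pure braid closure satisfies $\nu=\lk$ only when all its crossings are positive. Here I would invoke Remark \ref{rem:homogeneous}: the alternating diagram is homogeneous, so the two bounds of Proposition \ref{pro:combinatorialbound} agree and compute $\nu$ exactly. For a braid closure on $m$ strands one has $o=\ell=m$ and, by non-splitness, $\ell_s=1$, so the lower bound collapses to $\nu(L^{(i)})=\lk(L^{(i)})+s_+(D)-1$. The quantity $s_+(D)$ equals the number of components of the graph whose vertices are the Seifert circles and whose edges are the positive crossings; since the Seifert graph of a non-split braid closure is the path $1-2-\cdots-m$ (every $\sigma_j$ occurs, by Lemma \ref{lem:non-splitaltbraids}(i)), deleting the negative edges leaves $s_+(D)=m-|P|$ components, where $P$ is the set of positively-signed generators. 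Hence $\nu(L^{(i)})-\lk(L^{(i)})=|N|$, the number of negatively-signed generators. Thus $\nu(L^{(i)})=\lk(L^{(i)})$ exactly when $|N|=0$, i.e. every generator is positive; by Lemma \ref{lem:non-splitaltbraids}(iii) two consecutive generators cannot then coexist, forcing $m=2$ and $\beta^{(i)}=\sigma_1^{2k_i}$ with $k_i\ge 1$ (the exponent is even by purity and positive because all crossings are positive). Therefore $L^{(i)}=T(2,2k_i)$ is a positive torus link, and $L$ is the split union of such links with unknots, as claimed.

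The main obstacle I anticipate is the exact evaluation of the combinatorial bound — in particular pinning down $s_+(D)$ (and checking $o=\ell=m$, $\ell_s=1$) correctly in terms of the sign-decorated Seifert graph, and making sure that the homogeneity statement of Remark \ref{rem:homogeneous} is applied to the \emph{non-split} pieces, for which it is asserted to hold. A secondary point requiring care is the split-union bookkeeping: that $\nu$ and $\lk$ are additive, that unknotted strands contribute $\nu=\lk=0$, and that the per-summand inequality $\nu\ge\lk$ genuinely lets the global equality $\nu(L)=\lk(L)$ descend to every summand.
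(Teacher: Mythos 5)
Your proof is correct and takes essentially the same route as the paper: both rest on Proposition \ref{prop:slc and pure}, the sharpness of the combinatorial bound of Proposition \ref{pro:combinatorialbound} on the alternating closure diagram (Remark \ref{rem:homogeneous}), the decomposition into non-split pieces, and Lemma \ref{lem:non-splitaltbraids} to force each piece to have at most two strands. The only difference is bookkeeping: you get the per-block equality $\nu = \lk$ from additivity under split union and compute $\nu - \lk = |N|$ directly (which yields positivity of each block at once), whereas the paper compares $sl(\beta) = 2\nu(L)-\ell$ globally to obtain $\sum_i s_+(D_i)=k$ and excludes negative $(2,n)$-torus links in a final step via \eqref{eq:OldConditionsQP}.
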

\begin{proof}
A split union of positive $(2,n)$-torus links is the closure of an alternating positive pure braid.

Let $\beta$ be a $n$-stranded alternating pure braid representing $L$, and denote by $D$ the diagram obtained as the closure of $\beta$.
Note that $D$ is alternating.
Denote by $D_1$, ..., $D_{k}$ the connected components of $D$, thus $\ell_s(D) =k$. 
Since there are no crossings between  the $D_i$'s, $L$  is the split union of the links represented by them.
To conclude, we prove that each $D_i$ represents either a positive $(2,n)$-torus link or an unknot.

The combinatorial bound in Proposition \ref{pro:combinatorialbound} is sharp for alternating diagrams -- Remark \ref{rem:homogeneous}.
Adding the fact that $\beta$ is a pure braid, and thus $n= o(D) = \ell$, we obtain
\[  2 \nu(L) - \ell = sl(\beta) + 2s_+(D) - 2 \ell_s= sl(\beta) + \sum_{i=1}^{k}2 s_+(D_i) - 2 k, \]
where the last equality is due to the fact that the $D_i$'s are the connected components of $D$.
From Proposition \ref{prop:slc and pure}, and from \eqref{eq:slc=nu} (recall that, by hypothesis, $L$ is concordant to a quasi-positive link), it follows  that $sl(\beta) = 2\nu(L) - \ell$.
Comparing the two expressions for $2\nu(L) - \ell$, we obtain
\[ \sum_{i=1}^{k} s_+(D_i) =  k. \]
Since $s_+(D_i) \geq 1$ by definition, it follows that $s_+(D_i) =1$.

Every connected component of an alternating diagram is an alternating diagram.
From this fact, and from \cite[Corollary~7.6.4]{CromwellBook}, it follows that each connected component of an alternating diagram is an alternating diagram representing a non-split link.
By definition, each $D_i$ is the closure of a $n_i$-stranded sub-braid of $\beta$, denoted by $\beta_i$, which is non-split and alternating for what we just said.
Now, Lemma~\ref{lem:non-splitaltbraids} implies that $s_+(D_i) \in \{ \lfloor n_i/2\rfloor , \lceil n_i/2\rceil \}$. Since we proved that  $s_+(D_i) =1$, we obtain $n_i \in \{ 1, 2 \}$. \\
It follows that each $\beta_i$ represents either an unknot or a non-trivial $(2,n)$-torus link. The latter satisfies Equation \eqref{eq:OldConditionsQP} if and only if it is positive, giving the desired result. 
\end{proof}

\begin{cor}
The Borromean link $B$ is not concordant to any quasi-positive link.
\end{cor}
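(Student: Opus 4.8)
The plan is to present the Borromean link $B$ as the closure of an alternating pure braid, and then apply Proposition \ref{prop:altpurebraids}. First I would take $\beta = (\si_1 \si_2^{-1})^3 \in B_3$, a braid whose closure is a standard diagram of $B$. To see that $\beta$ is a \emph{pure} braid, note that the underlying permutation of $\si_1\si_2^{-1}$ is the $3$-cycle $(1\,3\,2)$, so its cube is the identity and all three strands return to their starting positions. Moreover, in $\beta$ the generator $\si_1$ occurs only with positive exponents and $\si_2$ only with negative exponents, so conditions (i)--(iii) of Lemma \ref{lem:non-splitaltbraids} are satisfied and $\beta$ is a non-split alternating braid. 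Hence $B$ is the closure of an alternating pure braid, and Proposition \ref{prop:altpurebraids} applies.

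By that proposition, if $B$ were concordant to a quasi-positive link, then $B$ would be the split union of (possibly trivial) positive $(2,n)$-torus links. The remaining step -- and the only substantive point -- is to rule this out. Here I would use that $B$ is a non-split link (for instance, its complement is hyperbolic, so it admits no essential separating sphere; equivalently, $B$ is a nontrivial Brunnian link). In a split union $T_1 \sqcup \cdots \sqcup T_r$ in which each $T_j$ is non-split, the integer $r$ equals the number of split components of the whole link; since $B$ is non-split this forces $r=1$, so $B$ itself would have to be a single positive $(2,n)$-torus link. But every $(2,n)$-torus link has at most two components, whereas $B$ has three, a contradiction.

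Alternatively, and perhaps more in the spirit of this section, I could argue through linking numbers: the three pairwise linking numbers of $B$ all vanish, so $\lk(B)=0$, while a genuine positive $(2,2m)$-torus summand ($m\geq 1$) contributes positive linking. Thus no two-component torus link could appear, forcing $B$ to be a split union of positive $(2,n)$-torus \emph{knots} and unknots, i.e.\ a completely split link, again contradicting the non-splitness of $B$. Either way, Proposition \ref{prop:altpurebraids} yields that $B$ is not concordant to any quasi-positive link. I expect the main (though modest) obstacle to lie in Step 1: correctly exhibiting a braid word whose closure is $B$ and verifying both purity and the alternating condition; once $B$ is recognised as an alternating pure-braid closure, the conclusion follows immediately from the already-established classification.
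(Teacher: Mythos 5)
Your proposal is correct and follows exactly the paper's route: exhibit $B$ as the closure of the alternating pure braid $(\sigma_1\sigma_2^{-1})^3\in B_3$ and invoke Proposition \ref{prop:altpurebraids}. The paper states this in one line, leaving implicit the verifications you spell out (purity, the alternating/non-split conditions, and the fact that the non-split three-component link $B$ cannot be a split union of positive $(2,n)$-torus links); your elaboration of these points is accurate.
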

\begin{proof}
Since $B$ is the closure of $(\sigma_1\sigma_2^{-1})^3\in B_3$, the result follows from Proposition \ref{prop:altpurebraids}.
\end{proof}
\section{Positive and positive-braid links}\label{sec:pbandp}
In this final section we prove the results concerning positive and positive-braid links.~%
\subsection{Positive links}
We call a link \dfn{simply-linked} if it admits a diagram such that the number crossings between distinct components equals twice the absolute linking number.~%
Clearly, positive links are simply-linked.~%
More generally, a link is simply-linked if, and only if, it has a diagram such that the crossings between each pair of fixed components have the same sign.
The splitting numbers of simply-linked links are equal to the absolute linking number, more precisely we have the following.

\begin{prop}\label{prop:sharp lk bound I}
Let $L$ be simply-linked.
Then, we have the following equalities
\[ \ssp(L) = \wsp(L) = \vert \lk\vert(L) = u(L) - \sum_{i=1}^{\ell} u(K_i)\]
\end{prop}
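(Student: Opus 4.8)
The plan is to collapse all four quantities onto the single equality $\ssp(L) = \vert\lk\vert(L)$. Once this is in hand, Lemma \ref{lem:sharplinkingbound} applies verbatim and delivers both $\wsp(L) = \vert\lk\vert(L)$ and $u(L) - \sum_{i=1}^{\ell} u(K_i) = \vert\lk\vert(L)$, closing the chain of equalities in the statement. So the entire content of the proposition is the computation of the strong splitting number of a simply-linked link.

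One of the two inequalities is immediate. A strong splitting sequence is in particular a splitting sequence all of whose crossing changes are mixed, so, taking $m = \ssp(L)$, Lemma \ref{lem:lkleqm} yields $\vert\lk\vert(L) \le \ssp(L)$ for \emph{any} link. It therefore remains to prove the reverse bound $\ssp(L) \le \vert\lk\vert(L)$, for which I would exhibit an explicit strong splitting sequence of length $\vert\lk\vert(L)$.

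For the construction I would begin from a simply-linked diagram $D$, in which the crossings between any two distinct components $K_i$ and $K_j$ all share a common sign and hence number exactly $2\vert\lk(K_i,K_j)\vert$. The aim is to change precisely $\vert\lk(K_i,K_j)\vert$ of these crossings for each pair, so that the total number of changes is $\sum_{i<j}\vert\lk(K_i,K_j)\vert = \vert\lk\vert(L)$, and to verify that the result is completely split. I would argue by induction on the total number of inter-component crossings of $D$, the base case being a diagram with no inter-component crossings, which represents a completely split link. In the inductive step I would locate, between a pair of components with at least two mutual crossings, two crossings that are consecutive along both strands and bound an innermost empty bigon; because these two crossings carry the same sign, the bigon is a clasp, so a single crossing change converts it into a Reidemeister~II configuration. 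Removing it lowers the number of inter-component crossings by two and the relevant linking number by one, so each crossing change accounts for exactly one unit of $\vert\lk\vert$, and the induction runs.

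The main obstacle is precisely the availability of this reduction step: one must guarantee that a suitable same-sign pair of inter-component crossings can always be found and that its change, followed by an isotopy supported away from the remaining components, cancels via Reidemeister~II without altering the crossing data of the other pairs. I would handle this by passing to the sphere $\bS^2$ and running an innermost-bigon argument on the two curves in question, using the coherence of signs to ensure the cancellation is genuinely of Reidemeister~II type rather than a clasp (this is exactly the point where the simply-linked hypothesis is essential, since for linking number zero alone geometric splitting can fail, as the Whitehead link shows). With $\ssp(L) = \vert\lk\vert(L)$ established, Lemma \ref{lem:sharplinkingbound} supplies the remaining equalities and completes the proof.
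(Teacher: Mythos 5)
Your reduction is exactly the paper's: Lemma \ref{lem:lkleqm} gives $\vert\lk\vert(L)\le \ssp(L)$ for free, and Lemma \ref{lem:sharplinkingbound} converts the single equality $\ssp(L)=\vert\lk\vert(L)$ into the full chain, so everything hinges on $\ssp(L)\le\vert\lk\vert(L)$. The gap is in your inductive step, and it is genuine: the innermost-bigon argument you invoke is a statement about \emph{embedded} closed curves on $\bS^2$, while the projections of link components are in general not embedded -- they have self-crossings -- and once self-crossings are present, a bigon bounded by one arc of each of the two chosen components need not exist at all. Concretely: take the standard two-crossing diagram of the positive Hopf link and add a Reidemeister~I kink to the inner arc and another to the outer arc of one component. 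The result is still a simply-linked diagram (the two mixed crossings are positive; the kinks are self-crossings), but every complementary region incident to both components now has at least three corners, because the kink crossings contribute extra corners to the regions that used to be bigons. So there is no bigon between the two components anywhere in the diagram, and your induction cannot even begin. This cannot be repaired by preprocessing: a knotted component必 has self-crossings in every diagram, and they can be positioned (e.g.\ a knotted tangle in place of the kink) to obstruct all bigons in the same way.

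There is a second, independent problem in the multi-component case: even when a bigon innermost with respect to the pair $(K_i,K_j)$ exists, strands of a third component $K_m$ may run through it, and your ``isotopy supported away from the remaining components'' is not available. Clearing such a strand out across a side of the bigon is a Reidemeister~II move, which deletes a pair of \emph{opposite-sign} crossings between $K_m$ and $K_i$ (or $K_j$); in a simply-linked diagram no opposite-sign mixed pairs exist, so such strands can be clasped to the sides of the bigon and cannot be isotoped out without extra, unaccounted-for crossing changes. The paper's proof of Proposition \ref{prop:sharp lk bound I} sidesteps both issues with a coarser argument: for \emph{any} diagram of \emph{any} link, $\ssp$ is at most half the number of mixed crossings -- pick one component and change crossings so that it passes entirely over or entirely under everything else (whichever requires fewer changes, hence at most half of its mixed crossings), split it off, and recurse on the remaining components. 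Applied to a simply-linked diagram, which by definition has exactly $2\vert\lk\vert(L)$ mixed crossings, this yields $\ssp(L)\le\vert\lk\vert(L)$ with no bigon-chasing whatsoever. Replacing your inductive step by this layering argument fixes the proof.
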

\begin{proof}
By Lemma \ref{lem:lkleqm} and Lemma \ref{lem:sharplinkingbound}, it is sufficient to prove the inequality $\ssp (L)\leq  \vert \lk \vert (L)$.
It is an easy exercise to prove that for any link $\ssp$ is at most half the number of crossings between different components in any diagram.
If $L$ is simply-linked, we can take a diagram with precisely $2|\lk|(L)$ crossings between different components. The result follows.
\end{proof}

\begin{proof}[of Theorem \ref{teo:obstruction_p}]
For all positive links we have $\vert \lk\vert (L) = \lk(L)$.~%
Since positive links are simply-linked, the result follows directly from Proposition \ref{prop:sharp lk bound I}.
\end{proof}

Now, we compare the obstruction given  by Theorem \ref{teo:obstruction_p} with other existing obstructions to positivity.\\
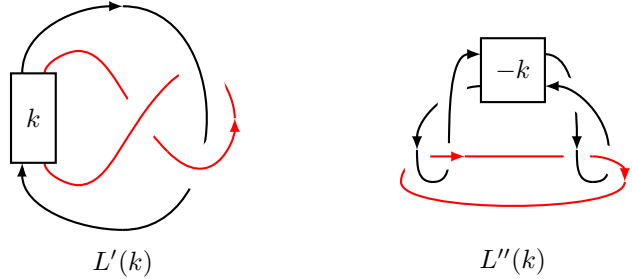
\begin{figure}[h]
\begin{tikzpicture}[thick, scale =.85]
\draw[latex-] (-1,0) .. controls +(0,.5) and +(-.5,0) .. (0,1);
\draw[white, line width =10] (-.5,0) .. controls +(0,.5) and +(-.5,0) .. (0,1.5);
\draw[-latex] (-.5,0) .. controls +(0,.5) and +(-.5,0) .. (0,1.5);

\draw[latex-](1.5,0) .. controls +(0,.5) and +(.5,0) .. (1,1.5);
\draw[white, line width =10] (2,0) .. controls +(0,.5) and +(.5,0) .. (1,1);
\draw[-latex] (2,0) .. controls +(0,.5) and +(.5,0) .. (1,1);

\draw[red] (-1.25,-.5) .. controls +(0,.25) and +(-.25,0) .. (-.75,-.1);
\draw[white, line width =10]  (-.75,-.5) .. controls +(-.25,0) and +(0,-.25) .. (-1,0);
\draw[] (-.75,-.5) .. controls +(-.25,0) and +(0,-.25) .. (-1,0);

\draw (-.75,-.5) .. controls +(.25,0) and +(0,-.25) .. (-.5,0);
\draw[white, line width =10] (-.25,-.1) -- (-.75,-.1); 
\draw[red, latex-] (-.25,-.1) -- (-.75,-.1); 
\node at (.5,-1.75) {$L''(k)$};

\begin{scope}[shift ={+(2.5,0)}]

\draw[red] (-1.25,-.1) -- (-.75,-.1);
\draw[white, line width =10]  (-.75,-.5) .. controls +(-.25,0) and +(0,-.25) .. (-1,0);
\draw[] (-.75,-.5) .. controls +(-.25,0) and +(0,-.25) .. (-1,0);

\draw (-.75,-.5) .. controls +(.25,0) and +(0,-.25) .. (-.5,-.1);
\draw[white, line width =10] (-.25,-.5) .. controls +(0,.25) and +(.25,0) .. (-.75,-.1); 
\draw[red, latex-] (-.25,-.5) .. controls +(0,.25) and +(.25,0) .. (-.75,-.1); 
\end{scope}

\draw[red] (-.25,-.1) -- (1.25,-.1);

\draw[red] (-1.25,-.5) .. controls +(0,-.5) and +(0,-.5) .. (2.25,-.5);

\draw (0, .75) rectangle (1,1.75);
\node at (.5,1.25) {$-k$};

\begin{scope}[shift ={+(-7,.5)}, scale = .35]

\draw[-latex] (-.5,2) .. controls +(0,2) and +(-2,0) .. (4,5);
\draw[latex-] (-.5,-2) .. controls +(0,-2) and +(-2,0) .. (4,-5);

\draw[red] (.5,2)  .. controls +(0,.5) and +(-.5,0) .. (2,3) .. controls +(1.5,0) and +(-2,2) .. (6,-1.5);
\pgfsetlinewidth{20*\pgflinewidth}
\draw[white] (.5,-2) .. controls +(0,-.5) and +(-.5,0) .. (2,-3) .. controls +(1.5,0) and +(-2,-2) .. (6,1.5);
\pgfsetlinewidth{0.05*\pgflinewidth}
\draw[red] (.5,-2) .. controls +(0,-.5) and +(-.5,0) .. (2,-3) .. controls +(1.5,0) and +(-2,-2) .. (6,1.5);

\draw[red] (9,0)  .. controls +(0,1) and +(2,2) .. (6,1.5);
\pgfsetlinewidth{20*\pgflinewidth}
\draw[white] (4,-5) .. controls +(5,0) and +(5,0) .. (4,5);
\pgfsetlinewidth{0.05*\pgflinewidth}
\draw[] (4,-5) .. controls +(5,0) and +(5,0) .. (4,5);

\pgfsetlinewidth{20*\pgflinewidth}
\draw[white] (9,0) .. controls +(0,-1) and +(2,-2) .. (6,-1.5);
\pgfsetlinewidth{0.05*\pgflinewidth}
\draw[latex-, red] (9,0) .. controls +(0,-1) and +(2,-2) .. (6,-1.5);

\draw (-1,2) rectangle (1,-2);
\node at (0,0) {$k$};
\node at (4,-6.5) {$L'(k)$};
\end{scope}
\end{tikzpicture}
\caption{The links $L'(k)$ and $L''(k)$. The boxes indicate $k\geq 0$ full twist with the same sign as their label.}\label{fig:lk=2}
\end{figure}
\begin{proof}[Proposition \ref{pro:indep_pos_obs}]
Consider the link $L'(k)$ on the left side of Figure \ref{fig:lk=2}. A quick inspection of the diagram shows that $\lk(L'(k))= k-1$. Moreover, since the diagram in Figure \ref{fig:lk=2} is alternating we can compute the signature combinatorially \cite{CombSign}, and get $\sigma(L'(k)) = 1-2k$. It follows that $L'(k)$ has positive linking number, positive components, and negative signature for $k\geq 2$. However, $L'(k)$ is not a positive link; in fact, $\wsp(L'(k)) \neq \ssp(L'(k))$ by \cite[Proposition 3.7]{CavalloCollari}, in contrast with Theorem \ref{teo:obstruction_p}. \\
Now, consider the link $L'''(n) = T(2,2) \# T(2,2n+1)^*$.  The link $L'''(n)$ has one negative component isotopic to $T(2,2n+1)^*$, $\lk (L'''(n)) =1$, and it is also simply linked. Therefore, Equation \eqref{eq:obs_pos} holds by Proposition \ref{prop:sharp lk bound I}. What is left is to compute the signature of $L'''(n)$. The signature is additive with respect to connected sum~\cite[Theorem 6.7.5]{Cromwell}. Moreover, we have $\sigma(T(2,2)) = -1$, while $\sigma(T(2,2n+1)^*)= 2n$. Thus, we obtain $\sigma(L'''(n)) =2n -1$, which is positive for $n\geq 1$.
\end{proof}

Now we can characterise all positive links with unlinking number $1$, starting from those with $\lk=1$ and two trivial components. 

\begin{prop}\label{prop:hopf}
If $L $ is a positive link with $\lk(L)=1$ and two unknotted components, then $L$ is the positive Hopf link.
\end{prop}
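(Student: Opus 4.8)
The plan is to pin down the slice Euler characteristic of $L$ exactly and then recognise $L$ as the boundary of an embedded annulus. Fix a positive diagram $D$ of $L$. Since $\lk(L)=1\neq 0$ the two components cannot be split from one another, so $L$ is non-split, $D$ is connected, and Seifert's algorithm on $D$ produces a connected, orientable Seifert surface $\Sigma$ with exactly two boundary circles and $\chi(\Sigma)=o(D)-c(D)$, where $c(D)$ is the number of crossings. A connected orientable surface with two boundary circles has $\chi(\Sigma)=-2g(\Sigma)\leq 0$, which is the first of the two bounds I need.

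For the reverse bound I would bring in the slice-torus machinery. Positive links are quasi-positive, hence concordant to a quasi-positive link, so Theorem~\ref{teo:MainFullQP} applies to the partition $L=K_1\cup K_2$ and gives $\nu(L)-\nu(K_1)-\nu(K_2)\leq \lk(K_1,K_2)=1$; as a slice-torus invariant vanishes on the unknotted components this reads $\nu(L)\leq 1$. On the other hand $D$ is positive, hence homogeneous, so the combinatorial bound of Proposition~\ref{pro:combinatorialbound} is sharp (Remark~\ref{rem:homogeneous}); moreover for a positive diagram flattening the positive crossings is flattening all crossings, so $s_+(D)=\ell_s(D)$, and the common value of the bound simplifies to $\nu(L)=\tfrac{c(D)-o(D)+\ell}{2}$, that is $2\nu(L)-\ell=c(D)-o(D)=-\chi(\Sigma)$. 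Combining, $-\chi(\Sigma)=2\nu(L)-2\leq 0$, so $\chi(\Sigma)\geq 0$. Hence $\chi(\Sigma)=0$ and $g(\Sigma)=0$, i.e. $\Sigma$ is an annulus.

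It remains to read off $L$ from $\Sigma$. An embedded annulus in $\bS^3$ retracts onto a core circle $J$, and its two boundary components are parallel copies of $J$; since these boundary components are the unknots $K_1,K_2$, the core $J$ is unknotted as well. An annulus about an unknot is standard, determined up to isotopy by the number $f$ of twists of the band, and its boundary is the torus link $T(2,2f)$, whose two components have linking number $\pm f$. As $\lk(L)=1$ this forces $f=1$, so $L=T(2,2)$ is the Hopf link; being positive with $\lk(L)=+1$, it is the positive Hopf link. I expect the only genuine subtlety to be the identity $2\nu(L)-\ell=c(D)-o(D)$ — equivalently, that the Seifert surface of a positive diagram realises $2\nu-\ell$ and hence the slice Euler characteristic — so I would justify $s_+(D)=\ell_s(D)$ and the sharpness of Proposition~\ref{pro:combinatorialbound} with care. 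This identity is what makes the annulus appear in $\bS^3$ rather than merely in $\bD^4$; the final annulus classification is standard, although one must track orientations, since for $f\geq 2$ the positively oriented link $T(2,2f)$ has genus $f-1$ and does not bound an annulus.
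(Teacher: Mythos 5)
Your proof is correct, but it takes a genuinely different route from the paper's. The paper argues diagrammatically: since every crossing of a positive diagram $D$ is positive and $\lk(L)=1$, exactly two crossings of $D$ involve both components; cutting along these two crossings exhibits $D$, up to planar isotopy, as a diagram of a connected sum of a Hopf link with two knots (Figure \ref{fig:hopf}), and unknottedness of the components forces those knots to be trivial. You instead pin down the topology of the Seifert surface: Theorem \ref{teo:MainFullQP} gives $\nu(L)\leq 1$, the combinatorial bound of Proposition \ref{pro:combinatorialbound} (with $s_+=\ell_s$ for positive diagrams) gives $\nu(L)\geq \bigl(c(D)-o(D)+2\bigr)/2$, and combining the two forces the Seifert surface $\Sigma$ of $D$ to be an annulus; the classification of embedded annuli in $\bS^3$ by core and framing then identifies $L$ as the positive Hopf link, with the orientation issue handled correctly by matching $\lk(L)=+1$. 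Both arguments are sound, and there is no circularity in yours (Theorem \ref{teo:MainFullQP} and Proposition \ref{pro:combinatorialbound} are established independently of this proposition). What each buys: the paper's proof is elementary and self-contained, using no invariants, and its two-mixed-crossings analysis is reused in Propositions \ref{prop:u=1} and \ref{prop:u=2}; your proof replaces the one informally stated step of the paper (``$D$ must be as in Figure \ref{fig:hopf}'') by cleanly citable results and yields extra information along the way ($\nu(L)=1$, $\chi_4(L)=0$, and that every positive diagram of such an $L$ has annular Seifert surface), at the cost of importing the slice-torus machinery and the standard -- but external to the paper -- annulus classification. One streamlining remark: the step you flag as the ``only genuine subtlety,'' namely sharpness of Proposition \ref{pro:combinatorialbound} on homogeneous diagrams, is not actually needed; the lower bound alone, valid for any diagram, already gives $\bigl(c(D)-o(D)+2\bigr)/2\leq\nu(L)\leq 1$, hence $\chi(\Sigma)\geq 0$, which is all your argument uses.
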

\begin{proof}
Let $D$ be a positive diagram representing $L$.~%
Since $\lk(L)=1$ there are only two crossings in $D$ which involve both components.~%
It follows that $D$ must be, up to planar isotopy, as illustrated in Figure \ref{fig:hopf}, and thus $L$ is a connected sum of a Hopf link, a knot $K_1$, and a knot $K_2$.~%
Since the components of $L$ are unknotted, it follows that $K_1$ and $K_2$ must be trivial.
\end{proof}

\begin{figure}[h]
\begin{tikzpicture}[thick, yscale =.75]
\draw[-latex] (1.75,.5) arc (0:180:1.25);
\draw[line width =10, white] (.75,.5) arc (0:180:1.25);
\draw[-latex] (.75,.5) arc (0:180:1.25);

\draw[latex-]  (.75,-.5) arc (0:-180:1.25);
\draw[line width =10, white] (1.75,-.5) arc (0:-180:1.25);
\draw[latex-](1.75,-.5) arc (0:-180:1.25);

\draw (-2,-.5) rectangle (-1.5,.5);

\draw (-1,-.5) rectangle (-.5,.5);
\draw (2,-.5) rectangle (1.5,.5);

\draw (1,-.5) rectangle (.5,.5);

\node at (1.75,0) {$T_1'$};
\node at (-1.75,0) {$T_0$};
\node at (.75,0) {$T_0'$};
\node at (-.75,0) {$T_1$};

\draw[->] (2.5,0) -- (3.5,0);
\node at (3,.5) {isotopy};

\begin{scope}[shift ={+(6,0)}]
\draw[] (1.75,.5) arc (0:180:1.25);
\draw[line width =10, white] (.75,.5) arc (0:180:1.25);
\draw[-latex] (.75,.5) arc (0:180:1.25);

\draw[]  (.75,-.5) arc (0:-180:1.25);
\draw[line width =10, white] (1.75,-.5) arc (0:-180:1.25);
\draw[latex-](1.75,-.5) arc (0:-180:1.25);

\draw (.75,-.5) -- (.75,.5);
\draw (-.75,-.5) -- (-.75,.5);

\draw (-2,-.5) rectangle (-1.5,.5);

\draw (2,-.5) rectangle (1.5,.5);

\node at (1.75,0) {$K_2$};
\node at (-1.75,0) {$K_1$};
\end{scope}
\end{tikzpicture}
\caption{The depiction of two positive diagrams of a link with two components, and linking number one. Each box indicates a (possibly trivial) $(1,1)$-tangle.}
\label{fig:hopf}
\end{figure}
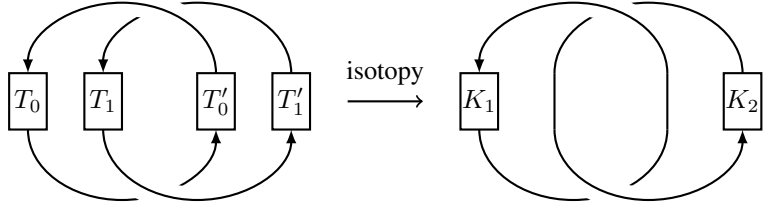

\begin{proof}[of Proposition \ref{prop:u=1}]
Let $D$ be a positive diagram representing a link $L = K_1 \cup ... \cup K_\ell$, and assume $L$ to have unlinking number one.~%
By Theorem \ref{teo:obstruction_p}, we have that
\begin{equation}
\lk(L) + \sum_{i} u(K_i) = u(L) =1.
\label{eq:tmpu=1}
\end{equation} 
Since the unknotting number is non-negative, one obtains that $\lk(L) \leq 1$.~%
Positive links have non-negative linking numbers, therefore $\lk(L)\in \{0,1\}$.
If $\lk(L)=0$, then by Theorem \ref{teo:obstruction_p} we have $\ssp(L)=0$ and  $\sum_{i} u(K_i) = u(L) = 1$.~%
It follows that $L$ is the split union of knots, and precisely one among the $K_i$ has unknotting number one.~%
Therefore, $L$ is the split union of an unlink and a positive knot whose unknotting number is one -- which is a twist knot by Nakamura's result \cite[Theorem 5.1]{Nakamura}.~%

If $\lk(L)=1$, then \eqref{eq:tmpu=1} implies that all components have unknotting number $0$, and therefore are unknotted.~%
Moreover, since positive diagrams are simply-linked, $D$ has precisely two crossing between distinct components.~%
It follows that $L$ is the split union of an unlink, and a positive link with two unknotted components and linking number $1$.~%
Hence, by Proposition \ref{prop:hopf}, $L$ is the split union of an unlink and a positive Hopf link.
\end{proof}

The proof of Proposition \ref{prop:u=2} proceeds similarly to the proof of Proposition \ref{prop:u=1}.

\begin{proof}[ of Proposition \ref{prop:u=2}]
Let $D$ be a positive diagram representing a link $L = K_1 \cup ... \cup K_\ell$, and assume $u(L)=2$.~%
From Theorem \ref{teo:obstruction_p}, it follows that
\begin{equation}
\lk(L) + \sum_{i} u(K_i) = u(L) = 2,
\label{eq:tmpu=2}
\end{equation} 
and thus $\lk(L)\in \{0, 1, 2\}$.~%

If $\lk(L) =2$, then all components must have unknotting number $0$, and hence be unknots (case a.).

If $\lk(L) = 1$, then $D$ must be the union of some knot diagrams and a diagram of the form shown in Figure \ref{fig:hopf} -- by the same reasoning as in the proof of Proposition~\ref{prop:hopf}.~%
Only one among the components of $L$ has unknotting number one; thus, by Propositions \ref{prop:u=1} and~{\cite[Theorem 5.1]{Nakamura}}, $L$ is either the split union of an unlink and a connected sum of a Hopf link with a twist knot (case b.), or the split  union of an unlink, a twist knot, and a Hopf link (case c.).~%

Finally, if $\lk(L)=0$ then $L$ is the split union of knots by Theorem \ref{teo:obstruction_p}.~%
In this case, the sum of the unknotting numbers of the components of $L$ is $2$.~%
Hence, either there exists $K_i$ and $K_j$ with $i\neq j$ which are twist knots (case d.), or there is a single $K_i$ with unknotting number two (case e.).

What is left to show is that there are infinitely many knot in each case.~%
It is known that there are infinitely-many twist knots -- eg.~\cite[Chapter 9, E 9.6]{BZH}, hence it is sufficient to show that the following families are infinite;
\begin{enumerate}
\item  positive links with linking number two and trivial components; 
\item  positive knots with unknotting number $2$.
\end{enumerate}
On the right of Figure \ref{fig:lk=2} is depicted the link $L''(k)$. Note that $\lk(L''(k)) = 2$, and the components  of $L''(k)$ are unknotted.~%
It is known that two alternating, non-split, and reduced diagrams of the same link must have the same number of crossings \cite{KauffmanTait,MurasugiTait,ThistlethwaiteTait}. Since the diagrams in Figure \ref{fig:lk=2} are alternating, non-split, and reduced, we have $L''(k) \neq L''(h)$ for $h\neq k$. Hence family (i) is infinite.

Now, we shall argue that $u(K\# K')=2$ when $K$ and $K'$ are positive twist knots. 
Then, family (ii) must be infinite since there are infinitely-many twist knots.~%
The unlinking number is sub-additive, thus $u(K\# K') \leq u(K) +u(K') =2$.~%
The sum of two positive knots is a positive knot.~%
Hence $u(K\#K')=1$ if, and only if, $K\#K'$ is a twist knot by \cite[Theorem 5.1]{Nakamura}.~%
However, twist knots are prime, and therefore it must be $u(K\#K') \geq 2$.
\end{proof}

\subsection{Positive-braid links}
In this final subsection we prove Theorem \ref{teo:obstruction_pb}, and its consequences.

\begin{proof}[of Theorem \ref{teo:obstruction_pb}]
Note that the  first equation in the statement follows directly from Theorem~\ref{teo:obstruction_p}, and the second equation. Therefore, it is sufficient to prove the second equation.\\
Let $\beta$ be a positive braid for $L = L_1 \cup ... \cup L_k$, and let $\beta_i$ the sub-braid of $\beta$ associated to $L_i$. %
Clearly, both $\beta$ and the $\beta_i$'s are positive (hence quasi-positive) braids. Comparing \eqref{eq:OldConditionsQP} and \eqref{eq:sl=chi}, we get $2\nu(L) - \ell= sl(\beta)$ and $2\nu(L_i) - \ell_i = sl(\beta_i)$. By Lemma \ref{lem:KeyLemma}, we have
\begin{equation}
 \nu(L) -\sum_{i=1}^{k} \nu(L_i)= \sum_{1\leq i <j\leq k} \lk (L_i, L_j).
 \label{eq:pbI}
 \end{equation}
Comparing \eqref{eq:pbI} with Theorem \ref{teo:obstruction_p} we have
\begin{equation}
\nu(L) -\sum_{i=1}^{\ell} \nu(K_i)= u(L) -\sum_{i=1}^{\ell} u(K_i)
 \label{eq:pbII}
 \end{equation}
where $ L =K_1 \cup \dots \cup K_\ell$.
Now, $L_i =  K_{r_{1,i}} \cup \dots \cup K_{r_{\ell_i,i}}$ is a positive-braid link  for each $i$, hence
\[ u(L) - \sum_{i=1}^{k} u(L_i) =  u(L) - \sum_{i=1}^{k} u(K_i) - \sum_{i=1}^{k} \left( u(L_i)- \sum_{j=1}^{\ell_i} u(K_{r_{j,i}}) \right) = \nu(L) -\sum_{i=1}^{k} \nu(L_i).\]
In the last equality we applied \eqref{eq:pbII} to both $L$ and each $L_i$. 
\end{proof}

\begin{figure}[h]
\includegraphics[scale=.19]{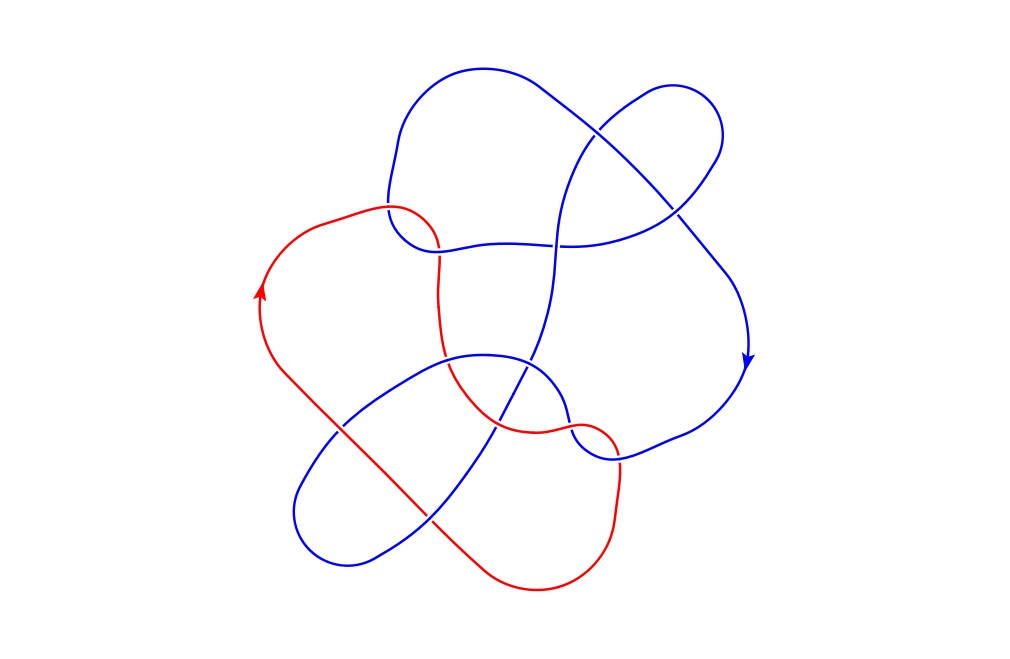}
\caption{A fibred positive link, which is not a positive-braid link, whose components are positive-braid knots.}
\label{fig:p-non-bp}
\end{figure}

\begin{exa}\label{ex:pnotbp2}
In Figure \ref{fig:p-non-bp} is shown a positive diagram of a link $L = K_1 \cup K_2$, where $K_1$ (red) is an unknot, and $K_2$ (blue) a trefoil knot.~%
Hence, $u(K_1)=0$ and $u(K_2) = 1$ \cite{KnotInfo}. 
Since $\lk(L) =4$, by Theorem~\ref{teo:obstruction_p}, we get $u(L)=5$.~%
However, by Proposition~\ref{pro:combinatorialbound}, we have that $\nu(L)=4$, $\nu(K_1) = 0$, and $\nu(K_2) =1$.~%
This contrasts with Theorem~\ref{teo:obstruction_pb}, hence $L$ is not a positive-braid link.~%
\end{exa}
Kawamura computed the unlinking number of positive-braid links; that is, he proved the following.
\begin{prop}[{\cite[Theorem 1.3]{Kawamura02}}]\label{prop:obstruction_pb_2}
Let $L$ be a positive-braid link, and $D$ a diagram obtained from closing a positive braid representing $L$. Then,
\[ 2u(L) = x(D) -o(D) + \ell,\]
where $x$ denotes the number of crossings.
\end{prop}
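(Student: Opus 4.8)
The plan is to prove the two inequalities $2u(L) \ge x(D) - o(D) + \ell$ and $2u(L) \le x(D) - o(D) + \ell$ separately, after first rewriting $x(D) - o(D)$ in invariant terms. Since $\beta$ is a positive braid on $n$ strands with $c$ crossings, its writhe is $w(\beta) = c = x(D)$, and the number of Seifert circles of the closed braid equals the number of strands, so $o(D) = n$. Hence $sl(\beta) = w(\beta) - n = x(D) - o(D)$, and by Rudolph's identity \eqref{eq:sl=chi} together with the sharpness of the combinatorial bound on positive (homogeneous) diagrams (Proposition \ref{pro:combinatorialbound} and Remark \ref{rem:homogeneous}) we get $x(D) - o(D) = sl(\beta) = -\chi_4(L) = 2\nu(L) - \ell$ for every slice-torus invariant $\nu$. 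Thus the claimed formula is equivalent to the clean statement $u(L) = \nu(L)$.

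The lower bound $u(L) \ge \nu(L)$ is immediate from the general theory. A slice-torus invariant changes by at most $1$ under a single crossing change and vanishes on unlinks; since $\nu(L) \ge 0$ for a positive link, any unlinking sequence of minimal length $u(L)$ yields, by the triangle inequality, $\nu(L) = |\nu(L) - \nu(\text{unlink})| \le u(L)$. (Equivalently one may argue with $\chi_4$: a crossing change alters $\chi_4$ by at most $2$, and the $\ell$-component unlink has $\chi_4 = \ell$, so $\ell - \chi_4(L) \le 2u(L)$, which is the same bound.)

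For the reverse inequality I would reduce to the case of knots using Theorem \ref{teo:obstruction_pb}, which is already available. Applying it to the partition of $L = K_1 \cup \cdots \cup K_\ell$ into its components gives $u(L) = \lk(L) + \sum_i u(K_i)$ and $\nu(L) = \lk(L) + \sum_i \nu(K_i)$; subtracting, $u(L) = \nu(L)$ holds if and only if $u(K_i) = \nu(K_i)$ for each component. Each $K_i$ is itself a positive-braid knot, for which $\nu(K_i) = g(K_i)$ equals the Seifert genus (the Bennequin surface is a fibre, hence genus-minimising), and the lower bound already yields $u(K_i) \ge \nu(K_i)$. So everything comes down to producing, for a positive-braid knot $K$, an unknotting sequence of length exactly $g(K)$.

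This last step is the main obstacle and constitutes the genuinely combinatorial content of Kawamura's theorem. I would argue by induction on the number of crossings of the positive braid: a nontrivial positive-braid knot should always admit a single crossing change that, after Reidemeister and Markov simplifications, yields a positive-braid knot of genus one less; iterating $g(K)$ times reaches the unknot, giving $u(K) \le g(K)$ and hence, with the lower bound, $u(K) = g(K)$. The delicate points — which I expect to require the real work — are guaranteeing that such a genus-reducing crossing change is always available and that the positive-braid structure is preserved along the way: one uses the braid and far-commutation relations to expose a suitable $\sigma_i^2$-type configuration whose alteration cancels two crossings, dropping $c - n$ by two. This is precisely the analysis of the positive braid word carried out by Kawamura.
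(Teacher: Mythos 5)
First, a structural point: the paper does not actually prove this proposition --- it is imported wholesale as \cite[Theorem 1.3]{Kawamura02} --- so there is no internal argument to compare against, and a blind proof must supply the entire content of Kawamura's theorem. Much of your proposal is sound and non-circular within the paper's logic: the rewriting $x(D)-o(D)=sl(\beta)=-\chi_4(L)=2\nu(L)-\ell$ via \eqref{eq:sl=chi} and \eqref{eq:OldConditionsQP} is correct, so the statement is indeed equivalent to $u(L)=\nu(L)$ (this is exactly the paper's observation \eqref{eq:u=nu}, which it derives \emph{from} Kawamura's result); the lower bound $u(L)\geq\nu(L)$ via the behaviour of $\chi_4$ under crossing changes is fine; and the reduction to components via Theorem \ref{teo:obstruction_pb} is legitimate, since the paper proves that theorem using only \eqref{eq:OldConditionsQP}, \eqref{eq:sl=chi}, Lemma \ref{lem:KeyLemma} and Theorem \ref{teo:obstruction_p}, with no appeal to this proposition. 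In effect you run the paper's own remark backwards: where the paper deduces \eqref{eq:u=nu} and hence an alternative proof of Theorem \ref{teo:obstruction_pb} from Kawamura, you use Theorem \ref{teo:obstruction_pb} to reduce Kawamura's statement to the knot case $u(K)=g(K)$ for positive-braid knots.

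The genuine gap is the last step, which you state but do not prove: that every nontrivial positive-braid knot admits a single crossing change which, after simplification, yields a positive-braid knot of genus one less. This is not a routine verification; it is the combinatorial core of Kawamura's theorem (and, for algebraic knots, of Boileau--Weber). A crossing change in a positive word replaces one $\sigma_i$ by $\sigma_i^{-1}$, and the resulting negative letter cancels only if two copies of $\sigma_i$ can first be made adjacent using cyclic permutation and the braid relations; one must also handle generators occurring exactly once (Markov-type destabilisation), split and connected-sum configurations, and verify that positivity and the strand/crossing count bookkeeping survive each reduction. Your phrase ``should always admit'' and the closing appeal to ``the analysis \dots carried out by Kawamura'' concede precisely this point. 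As it stands, the proposal rigorously establishes only the inequality $2u(L)\geq x(D)-o(D)+\ell$, and reduces the opposite inequality to the unproved claim --- that is, to the hard half of the very theorem being proved.
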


Comparing Proposition \ref{prop:obstruction_pb_2} with \cite[Theorem 1.3]{CavalloCollari} (cf.~Proposition \ref{pro:combinatorialbound}, note that for positive diagrams $s_+ = \ell_s$) one obtains that
\begin{equation}
 u(L) = \nu(L)
\label{eq:u=nu}
\end{equation}
for each positive-braid link $L$, and each slice-torus link invariant $\nu$.~%
Note that \eqref{eq:u=nu} can be used to give an alternative proof of Theorem \ref{teo:obstruction_pb};
Equation \eqref{eq:u=nu} implies the second assertion in Theorem \ref{teo:obstruction_pb}.~%
In turn, this implies the whole theorem via Theorem \ref{teo:obstruction_p}.

\begin{rem}\label{rem:ubpfromposdiag}
Since $u= \nu$ for positive-braid links, it follows from \cite[Theorem 1.3]{CavalloCollari} that the unlinking number of a positive-braid link can be computed using the formula in Proposition \ref{prop:obstruction_pb_2} from any positive diagram, and not only from the closures positive braids.
\end{rem}%
\begin{rem}
A quick check on Knotinfo \cite{KnotInfo} shows that $16$ positive prime knots with less than $12$ crossings do not satisfy Equation \eqref{eq:u=nu} with $\nu = \tau$, and thus are not positive-braid knots.
\end{rem}

\begin{proof}[of Proposition \ref{prop:criterion from p to be bp}]
Let $D$ be a positive diagram of a positive-braid link $L =K_1 \cup ... \cup K_\ell$. %
Denote by $D_i$ the diagram obtained from $D$ by deleting all but the strands belonging to $K_i$.~%
Note that $D$ and the $D_i$'s are positive diagrams, and $L$ and the $K_i$'s are positive-braid links. Hence, Remark~\ref{rem:ubpfromposdiag} implies  that $2u(D) = x(D) - o(D) +\ell$ and $2u(K_i) = x(D_i) -o(D_i) + 1$.~%
Moreover, by Theorem~\ref{teo:obstruction_p}, we have
\[ u(L) - \sum_i u(K_i) = \lk(L).\]
Finally, a simple observation, akin to the proof of Lemma \ref{lem:KeyLemma}, shows that in this case
\[ x(D) -\sum_i x(D_i) = w(D) -\sum_i w(D_i) =2 \lk(L).\]
Thus, putting all the above facts together, we obtain 
\[ x(D) -\sum_i x(D_i) = 2 \lk(L) = 2(u(L) - \sum_i u(K_i)) =  x(D) -o(D) - \sum_{i} (x(D_i) - o(D_i)). \]
It follows that $o(D) - \sum_i o(D_i) = 0$, which concludes the proof.~%
\end{proof}

Finally, combining Kawamura's result, ie.~Proposition \ref{prop:obstruction_pb_2}, and Theorem \ref{teo:obstruction_p} we obtain a formula for the unlinking number for positive links with positive-braid components.

\begin{prop}\label{prop: unlinking pos and braid pos}
Let $L =K_1 \cup ... \cup K_\ell$ be a positive link, and $D$ a positive diagram for $L$. Denote by $D_i$ the diagram obtained by deleting all the components but $K_i$ from $D$. If $K_1,...,K_\ell$ are positive braid knots, then
\[ u(L) =  \frac{x(D) - \sum_{i} o(D_i) + \ell}{2},\]
where $x$ denotes the number of crossings.
\end{prop}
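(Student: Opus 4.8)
The plan is to combine three ingredients that are already available: the unlinking-number formula for positive links from Theorem~\ref{teo:obstruction_p}, the fact (Remark~\ref{rem:ubpfromposdiag}) that Kawamura's count (Proposition~\ref{prop:obstruction_pb_2}) may be applied to \emph{any} positive diagram of a positive-braid knot, and the elementary crossing-counting identity used in the proof of Proposition~\ref{prop:criterion from p to be bp}. The whole argument is then just algebra assembling these pieces.

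First I would apply Theorem~\ref{teo:obstruction_p} to the positive link $L$, which gives
\[ u(L) = \lk(L) + \sum_{i=1}^{\ell} u(K_i). \]
Next, since each $K_i$ is a positive-braid knot and $D_i$ is a positive diagram for $K_i$, Remark~\ref{rem:ubpfromposdiag} allows me to apply the Kawamura-type formula to $D_i$ directly, even though $D_i$ need not itself be the closure of a positive braid; this yields
\[ 2u(K_i) = x(D_i) - o(D_i) + 1 \]
for each $i$ (the $\ell$ of Proposition~\ref{prop:obstruction_pb_2} being $1$ for a knot).

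I would then record the crossing-count identity. Because $D$ and all the $D_i$ are positive, the writhe equals the number of crossings, so $x(D) = w(D)$ and $x(D_i) = w(D_i)$; the crossings of $D$ that do not lie in a single $D_i$ are precisely the mixed crossings, of which there are $2\lk(L)$ since $L$ is positive. Hence $x(D) - \sum_i x(D_i) = 2\lk(L)$, exactly as in the proof of Proposition~\ref{prop:criterion from p to be bp}.

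Finally I would substitute. Doubling the first displayed equation and inserting the expressions for $2u(K_i)$ gives
\[ 2u(L) = 2\lk(L) + \sum_{i} x(D_i) - \sum_{i} o(D_i) + \ell, \]
and replacing $2\lk(L)$ by $x(D) - \sum_i x(D_i)$ cancels the $\sum_i x(D_i)$ terms, leaving $2u(L) = x(D) - \sum_i o(D_i) + \ell$, which is the claimed formula after dividing by $2$. Everything here is bookkeeping; the only genuinely delicate point is the appeal to Remark~\ref{rem:ubpfromposdiag}, namely that Kawamura's count remains valid on the not-necessarily-braided positive diagrams $D_i$, so I would make sure that step is invoked explicitly rather than silently using Proposition~\ref{prop:obstruction_pb_2} on a diagram to which it does not literally apply.
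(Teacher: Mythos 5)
Your proposal is correct and follows essentially the same route as the paper's proof: Theorem~\ref{teo:obstruction_p} gives $u(L)=\lk(L)+\sum_i u(K_i)$, the Kawamura-type count gives $2u(K_i)=x(D_i)-o(D_i)+1$, and the positive-diagram identity $x(D)-\sum_i x(D_i)=2\lk(L)$ finishes the bookkeeping. If anything you are slightly more careful than the paper, whose proof cites Proposition~\ref{prop:obstruction_pb_2} directly for the diagrams $D_i$ even though they need not be closures of positive braids, whereas you correctly route that step through Remark~\ref{rem:ubpfromposdiag}.
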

\begin{proof}
Since $L$ is positive, it follows from Theorem \ref{teo:obstruction_p} that
\[ 2u(L) = 2\sum_i u(K_i) +2 \lk(L) = \sum_i 2u(K_i) + (x(D) - \sum_i x(D_i)),\]
where the second equality follows from the definition of linking number and the positivity of the diagram -- cf.~proof of Proposition \ref{prop:criterion from p to be bp}. \\
By Proposition \ref{prop:obstruction_pb_2} we have that  $2u(K_i) = x(D_i) -o(D_i) + 1$ for each $i$.~%
Therefore 
\[ 2u(L) =  \sum_i (x(D_i) -o(D_i) + 1) + (x(D) - \sum_i x(D_i))  = x(D) - \sum_i o(D_i) +\ell.\]
\end{proof}

\begin{exa}
Consider $12$-crossing diagram $D$ representing the link $L = K_1 \cup K_2$ depicted in Figure~\ref{fig:p-non-bp}. %
Denote by $D_i$ the diagram associated to $K_i$, for $i=1,2$. One can check that $o(D_1) + o(D_2) =4$.~%
Therefore, we obtain $u(L) = (x(D) -o(D_1) - o(D_2) + \ell)/2 = 5$ -- cf.~Example \ref{ex:pnotbp2}.
\end{exa}

\appendix

\section{Any link is a sub-link of a quasi-positive link}\label{app:sublinks}

In this short appendix we prove that any link arises as sub-link of a quasi-positive link.~%
\begin{thm}\label{teo:allsublinks}
Any link $L$ is a sub-link of a quasi-positive link $L'$. Furthermore, $L'$ can be obtained from $L$ by adding a single unknotted component.
\end{thm}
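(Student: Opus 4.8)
The plan is to realize any link $L$ as the closure of a braid and then add a single new strand in such a way that the resulting braid becomes quasi-positive while the closure of the original sub-braid remains $L$. By Alexander's theorem, $L$ is the closure of some braid $\beta \in B_n$. Writing $\beta$ as a word in the Artin generators and their inverses, the obstruction to quasi-positivity is precisely the presence of negative generators $\sigma_i^{-1}$. The key geometric idea is that a negative crossing can be ``absorbed'' at the cost of interacting with an auxiliary unknotted strand, so I would introduce one extra strand whose role is to convert each troublesome negative crossing into a conjugate of a positive generator.

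Concretely, I would first stabilize by adding an $(n+1)$-st strand and place it so that it can be routed past the existing strands. The main manipulation is the standard trick (due to Rudolph) that lets one replace a word in $B_n$ containing negative generators by a quasi-positive word in a larger braid group without changing the closure's sub-link structure. The cleanest route is to use the fact that any braid $\beta$ is \emph{conjugate-positive} after a single Markov stabilization combined with band moves along the new strand: each occurrence of $\sigma_i^{-1}$ in $\beta$ can be rewritten, using the extra strand, as a conjugate $w \sigma_j w^{-1}$ of a positive generator. Doing this for every negative letter turns the full braid into a product of conjugates of positive generators, i.e.~a quasi-positive braid $\beta'$. The sub-braid of $\beta'$ obtained by deleting the new strand is (isotopic to) $\beta$, so its closure is exactly $L$, and the new component is unknotted by construction since the added strand bounds a disk.

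The steps, in order, would be: (1) present $L$ as $\widehat{\beta}$ for $\beta \in B_n$; (2) introduce a single new unknotted strand via stabilization together with band-move modifications; (3) verify that the band moves replace each negative generator by a conjugate of a positive generator, so that the enlarged braid $\beta'$ is genuinely quasi-positive in the sense of being a product of conjugates of Artin generators; (4) check that deleting the new strand recovers $\beta$ up to braid isotopy, hence that $L = \widehat{\beta}$ is the sub-link of $L' = \widehat{\beta'}$ associated to the original strands; and (5) confirm the new component is unknotted.

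The main obstacle I expect is step (3): controlling the rewriting so that \emph{one} extra strand suffices to fix \emph{all} negative crossings simultaneously, rather than needing a new strand per negative letter. The honest way to handle this is to route the single new strand so that it passes by each negative crossing in turn, using the fact that conjugation and band moves along a shared auxiliary strand compose without requiring additional strands; making this geometrically precise (e.g.~with an explicit isotopy of the band presentation, or by invoking Rudolph's characterization of quasi-positive braids via band generators) is where the real care is needed. Keeping track of orientations and verifying that the added strand remains unknotted throughout — rather than acquiring knotting from the bands — is the secondary technical point to watch.
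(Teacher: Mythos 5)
Your overall strategy---add one auxiliary unknotted strand, use it to neutralise each negative letter of a braid word $\beta$ for $L$, and check that deleting the strand recovers $\beta$---is exactly the strategy of the paper's proof. But the pivotal step (3) is left as an acknowledged "obstacle," and as you have literally stated it, it would fail: no occurrence of $\sigma_i^{-1}$ can be "rewritten as a conjugate $w\sigma_jw^{-1}$ of a positive generator," because the abelianisation $B_{n+1}\to\mathbb{Z}$ sends every Artin generator to $1$, so $\sigma_i^{-1}$ has exponent sum $-1$ while any conjugate of a generator (indeed any quasi-positive word) has positive exponent sum. Hence the replacement cannot be an identity in the braid group: one must genuinely change the braid by inserting new positive crossings between the old strands and the auxiliary one, and then prove that each block consisting of the surviving negative letter \emph{together with} the inserted crossings is quasi-positive. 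That braid-group identity is the actual content of the theorem, and it is the one thing your proposal does not supply. A second, smaller inaccuracy: a Markov stabilisation merges the new strand into an existing component of $L$, so it cannot be the mechanism here; the extra strand must be added as a new split component, and every inserted word must have trivial underlying permutation so that the strand closes up to a separate (and unknotted) component.

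For comparison, the paper's proof fills this gap with an explicit formula. Writing $\beta=\sigma_{i_1}^{\epsilon_1}\cdots\sigma_{i_k}^{\epsilon_k}\in B_n$ and working in $B_{n+1}$, for each $r$ with $\epsilon_r=-1$ it inserts after $\sigma_{i_r}^{-1}$ the word $w_r=(\sigma_n\sigma_{n-1}\cdots\sigma_{i_r})(\sigma_{i_r}\sigma_{i_r+1}\cdots\sigma_n)$: the last strand dives down to the negative crossing, makes a positive clasp, and returns, so $w_r$ has trivial permutation. The quasi-positivity of each block is then the identity
\[
\sigma_{i_r}^{-1}w_r \;=\; \bigl(\sigma_{i_r}^{-1}(\sigma_n\cdots\sigma_{i_r+1})\sigma_{i_r}\bigr)\cdot\bigl(\sigma_{i_r}\sigma_{i_r+1}\cdots\sigma_n\bigr),
\]
whose first factor is a product of conjugates $\sigma_{i_r}^{-1}\sigma_t\sigma_{i_r}$ of Artin generators and whose second factor is a positive word (note: a product of several conjugates of generators, not a single one, which is consistent with the exponent count). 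With this in hand, your remaining steps (4) and (5) are immediate, exactly as you predicted: the new strand has no self-crossings and all letters involving it are the $w_r$'s, so it closes to an unknotted extra component and deleting it returns precisely $\beta$. So your outline is the right one, but without a concrete compensating word and the displayed identity, the proposal stops short of a proof precisely at the step that constitutes it.
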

\begin{proof}
Let  $\beta = \sigma_{i_1}^{\epsilon_1} \cdot \cdots \cdot  \sigma_{i_k}^{\epsilon_k} \in B_n$ be a braid representative for $L$, where $\epsilon_1,...,\epsilon_k \in\{\pm 1\}$.
Our aim is to define a new braid $\beta' \in B_{n+1}$ which is quasi-positive, and whose closure contains $L$ as a sub-link.
First, define the following braid
\[ w_r =\begin{cases}  \sigma_{n+1} \cdot ... \cdot \sigma_{i_r}\cdot \sigma_{i_r} \cdot  ... \cdot \sigma_{n+1} & \text{if } \epsilon_r =  -1\text{}\\ 1 & \text{otherwise}\end{cases},\quad\text{for each $r \in \{ 1,...,k\}$ }\] 
Notice that the permutation associated to each $w_r$ is trivial. 
Now, define
\[ \beta' = (\sigma_{i_1}^{\epsilon_1}w_1) \cdot \cdots \cdot  (\sigma_{i_k}^{\epsilon_k}w_k)\in B_{n+1}. \]
Clearly, the permutation associated to $\beta'$ fixes $n+1$, and behaves as the permutation associated to $\beta$ on $\{1,...,n\}$. This fact implies that the closure of $\beta'$ has an unknotted component $U$ which starts and ends on the $(n+1)$-th strand.
The braid $\beta'$ is quasi-positive since it is a product of quasi-positive braids. Indeed, we have
\[\sigma_{i_r}^{\epsilon_{i_r}}w_r =\begin{cases}  \sigma_{i_r}^{-1} (\sigma_{n+1}  ...  \sigma_{i_r+1})\sigma_{i_r} (\sigma_{i_r}   ...  \sigma_{n+1}) & \text{if } \epsilon_r =  -1\text{,}\\ \sigma_{i_r} & \text{otherwise.}\end{cases}\]
Finally, we have to show that the link $L$ is a sub-link of the closure $L'$ of $\beta'$. We remark that the unknotted component $U$ has no self-crossings in $\beta'$, and the $w_r$ are precisely the crossings between $U$ and that the rest of the components.~%
Therefore, the braid associated to $L'\setminus U$ is obtained removing all the $w_r$ from $\beta'$ thus re-obtaining $\beta$. 
\end{proof}

\section{Table of positive, and positive-braid, prime links with less than $8$  crossings}\label{app:table}

\definecolor{darkgreen}{RGB}{0, 102, 51}
\newcommand{\cmark}{\textcolor{darkgreen}{\ding{51}}}%
\newcommand{\xmark}{\textcolor{red}{\ding{55}}}%

In this appendix we compile a table of positive and positive-braid prime links with less than $8$ crossings. 
The naming convention in Table \ref{tab:small_linksI} follows LinkInfo \cite{LinkInfo}. In particular, we do not distinguish between $L$, $L^*$, $rL$, and $rL^*$.
A link $L$ is marked as positive (resp. as positive-braid) if either $L$ or $L^*$ is positive (resp. a positive-braid).
Positive links (resp. positive-braid links) are indicated with a tick mark (\cmark) under the column `P' (resp. `BP'). Similarly, non-positive and non-braid-positive links will be indicated with a cross mark (\xmark) in the corresponding column. For each positive (or positive-braid) link, the diagram (resp. braid) in \cite{LinkInfo} is positive  (up to mirror).
In the remaining columns, labelled by \ref{teo:obstruction_p} and \ref{teo:obstruction_pb}, we indicate whether Theorem \ref{teo:obstruction_p} and Theorem \ref{teo:obstruction_pb}, respectively, provide the desired obstruction (\cmark) or not (\xmark).
When Theorems \ref{teo:obstruction_p} and \ref{teo:obstruction_pb} do not work, the obstruction used shall be denoted by one of the following symbols
\begin{itemize}
\item[$\sigma$] sign of the signature, and non-negativity of the linking matrix;
\item[c] positivity of the components, and non-negativity of the linking matrix;
\item[f] the link is not fibred -- recall that a positive link is fibred if, and only if, the reduced Seifert graph of a positive diagram is a tree \cite[Theorem 5.1]{FKP}. In particular, positive-braid links are fibred~\cite{Stallings}.
\end{itemize}
We warn the reader that the signature and linking matrix in \cite{LinkInfo} might be coherent only up to mirror.
\begin{table}[h]\vspace*{-3ex}
\begin{tabular}{clccccc|}
Name &  P &  \ref{teo:obstruction_p}   &  BP &  \ref{teo:obstruction_pb}\\
\hline\hline
L2a1\{0\}  & \cmark &     & \cmark &      \\
L2a1\{1\} & \cmark &     &  \cmark &      \\
L4a1\{0\} &  \cmark &      & \xmark &   \cmark   \\
L4a1\{1\} & \cmark  &     &  \cmark &     \\
L5a1\{0\} & \xmark & \cmark   & \xmark &      \\
L5a1\{1\} & \xmark & \cmark    &  \xmark &     \\
L6a1\{0\} &  \xmark$^\sigma$ & \xmark  & \xmark &     \\
L6a1\{1\} & \cmark  &      &  \xmark$^{\rm f}$  &   \xmark  \\
L6a2\{0\} & \cmark &       & \xmark &  \cmark     \\
L6a2\{1\} & \cmark &       & \xmark &  \cmark    \\
L6a3\{0\} & \cmark &    & \cmark &      \\
L6a3\{1\} & \cmark &     &  \xmark &  \cmark   \\
L6a4\{0,0\} & \xmark & \cmark   &  \xmark &     \\
L6a4\{1,0\} & \xmark & \cmark  & \xmark &      \\
L6a4\{0,1\} & \xmark & \cmark  &  \xmark &     \\
L6a4\{1,1\} & \xmark & \cmark    &  \xmark &     \\
L6a5\{0,0\} & \cmark &    &  \xmark$^{\rm f}$ & \xmark \\
L6a5\{1,0\} & \xmark & \cmark    & \xmark &      \\
L6a5\{0,1\} & \xmark & \cmark   & \xmark &      \\
L6a5\{1,1\} & \xmark & \cmark    &  \xmark &     \\
L6n1\{0,0\} & \xmark & \cmark    & \xmark &      \\
L6n1\{1,0\} & \xmark & \cmark   &  \xmark &     \\
\end{tabular}%
\begin{tabular}{clcccc}
Name &  P &  \ref{teo:obstruction_p}   &  BP &   \ref{teo:obstruction_pb}\\
\hline\hline
L6n1\{0,1\} & \cmark &   & \cmark &      \\
L6n1\{1,1\} & \xmark &  \cmark  &  \xmark &     \\
L7a1\{0\} & \xmark  &  \cmark    & \xmark &      \\
L7a1\{1\} & \xmark  &  \cmark    & \xmark &      \\
L7a2\{0\} & \cmark &       & \xmark & \cmark     \\
L7a2\{1\} & \xmark$^c$  & \xmark    &  \xmark &     \\
L7a3\{0\} & \xmark  &  \cmark    & \xmark &      \\
L7a3\{1\} & \xmark  &  \cmark    & \xmark &      \\
L7a4\{0\} & \xmark  &  \cmark    & \xmark &      \\
L7a4\{1\} & \xmark  &  \cmark    &  \xmark &     \\
L7a5\{0\} & \xmark$^\sigma$  &  \xmark   & \xmark &      \\
L7a5\{1\} & \xmark$^\sigma$ &  \xmark   &  \xmark &     \\ 
L7a6\{0\} & \xmark  &  \cmark    &  \xmark &     \\
L7a6\{1\} & \xmark  &  \cmark   & \xmark &      \\
L7a7\{0,0\} & \xmark & \cmark   & \xmark &      \\
L7a7\{1,0\} & \xmark & \cmark    & \xmark &      \\
L7a7\{0,1\} & \xmark & \cmark   &  \xmark &     \\
L7a7\{1,1\} & \cmark &    & \xmark$^{\rm f}$  & \xmark     \\
L7n1\{0\} &  \cmark &      & \cmark &      \\
L7n1\{1\} & \xmark$^c$  &  \xmark  &  \xmark &    \\
L7n2\{0\} & \xmark & \cmark   & \xmark &      \\
L7n2\{1\} & \xmark & \cmark    &  \xmark &     \\
\end{tabular}
\caption{Positivity and braid-positivity of the $44$ prime links with less than $8$ crossings.}\label{tab:small_linksI}
\end{table}

\end{document}